\def\wrt{with respect to}
\def\proofend{\hbox to 1em{\hss}\hfill $\blacksquare $\bigskip }
\newtheorem*{theorem*}{Main Theorem}
\newtheorem*{theoremA}{Theorem A}
\newtheorem*{theoremB}{Theorem B}
\newtheorem{theorem}{Theorem}[section]
\newtheorem{proposition}[theorem]{Proposition}
\newtheorem{lemma}[theorem]{Lemma}
\newtheorem{remark}[theorem]{Remark}
\newtheorem{corollary}[theorem]{Corollary}
\newtheorem{claim}[theorem]{Claim}
\newtheorem{scaltheorem}[theorem]{Vanishing Theorem}
\def\BKRP#1{M_{#1}}
\def\BKRPeps#1#2{M_{(#1,#2)}}
\def\ccc {{\textbf{c}}}
\def\spinc{Spin ^\ccc}
\def\Z{{\mathbb Z}}
\def\R{{\mathbb R}}
\def\C{{\mathbb C}}
\def\N{{\mathbb N}}
\def\ind{\mathrm{ind}}
\def\sign{\mathrm{sign}}
\def\epsi{\epsilon}
\def\Diractwist#1#2{D_{#1,#2}}
\def\DiractwistAPS#1#2{D^+_{#1,#2}}
\def\fdn#1#2{f_{#1,#2}}
\def\so2{S^1}
\def\pullback{pull-back}
\def\groupelement{u}
\begin{document}


\title{Moduli space of metrics of nonnegative sectional or positive Ricci curvature on homotopy real projective spaces}

\author{Anand Dessai, David Gonz\'alez-\'Alvaro}

\date{\today}

\maketitle
\begin{abstract}
We show that the moduli space of metrics of nonnegative sectional curvature on every homotopy $\R P^5$ has infinitely many path components. We also show that in each dimension $4k+1$ there are at least $2^{2k}$ homotopy $\R P^{4k+1}$s of pairwise distinct oriented diffeomorphism type for which the moduli space of metrics of positive Ricci curvature has infinitely many path components. Examples of closed manifolds with finite fundamental group with these properties were known before only in dimensions $4k+3\geq 7$.
\end{abstract}

\section{Introduction}\label{Section: Introduction}
A fundamental problem in Riemannian geometry is to understand how the possible geometries which a given smooth closed manifold can carry is reflected in its topology. Often the geometric property under consideration is expressed in terms of a lower curvature bound for sectional, Ricci or scalar curvature. The topological nature of such a condition is comparably well understood in the case of positive scalar curvature whereas only a few results concerning the relation of lower bounds for sectional or Ricci curvature to topology are known.

Next to the question of whether a metric satisfying a given curvature bound exists, one may ask whether the moduli space of all such metrics, if not empty, has some interesting topology. Again most results in this direction are for metrics of positive scalar curvature whereas much less is known about the topology of the moduli spaces of metrics satisfying lower bounds for sectional or Ricci curvature (for extensive information on the topic see \cite{TW15,T16}).

In the present article we focus on closed manifolds with finite fundamental group and metrics of nonnegative sectional or positive Ricci curvature. Manifolds have been found for which either of the corresponding moduli spaces has infinitely many path components \cite{KS93,KPT05,W11,DKT18,D17,G17}. Besides these results no other topological properties of the respective moduli spaces seem to be known if one restricts to manifolds of dimension $>3$. All these results are confined to manifolds of dimension $4k+3$ and are based on methods related to the Gromov-Lawson relative index \cite[p. 327]{LM89} or use the Kreck-Stolz invariant \cite[Prop. 2.13]{KS93} to distinguish path components of the respective moduli space.

In this paper we consider homotopy real projective spaces of dimension $4k+1\geq 5$ and use eta-invariants to distinguish path components of their moduli spaces. Our first result is for homotopy $\R P^5$s.

\begin{theoremA}
 Let $M$ be a closed manifold homotopy equivalent to $\R P^5$. Then the moduli space $\mathcal{M}_{sec\geq 0}(M)$ of metrics of nonnegative sectional curvature on $M$ has infinitely many path components. The same holds true for the moduli space $\mathcal{M}_{Ric > 0}(M)$ of metrics of positive Ricci curvature on $M$.
\end{theoremA}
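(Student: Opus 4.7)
The strategy adapts the Kreck--Stolz/Wraith eta-invariant approach to the non-spin setting forced by $w_2(\R P^5)\neq 0$: one replaces the spin-Dirac operator by a twisted $\spinc$-Dirac operator and distinguishes path components by a resulting rho-type invariant. First, I would produce an infinite sequence $(\BKRP{\bar a_i},g_{\bar a_i})$ of Riemannian homotopy $\R P^5$s with $g_{\bar a_i}$ of nonneg sec via Brieskorn quotients: for a suitable 4-tuple $\bar a=(a_0,\dots,a_3)$, the Brieskorn variety
$\Sigma(\bar a)=\{z\in S^7 : z_0^{a_0}+\dots+z_3^{a_3}=0\}\subset\C^4$
is a homotopy $5$-sphere, hence the standard $S^5$ since $\Theta_5=0$. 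The linear involution $z\mapsto -z$ acts freely on $\Sigma(\bar a)$ and commutes with the cohomogeneity-one symmetry, so the Grove--Ziller metrics of nonneg sec descend to $\BKRP{\bar a}:=\Sigma(\bar a)/\Zp{2}$. Letting $\bar a$ vary and invoking the oriented classification of homotopy $\R P^5$s, one arranges $\BKRP{\bar a_i}\cong M$ for each $i$, giving the desired sequence of metrics on $M$.

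Second, I would define the path-component invariant. Fix a $\spinc$ structure $\mathfrak{c}$ on $M$ and let $L_\alpha$ be the flat complex line bundle associated to the nontrivial character $\alpha\colon \pi_1(M)=\Zp{2}\to U(1)$; let $\Diractwist{\mathfrak{c}}{\alpha}(g)$ be the corresponding twisted $\spinc$-Dirac operator and set
\[
\tilde\rho_\alpha(M,g)\;:=\;\eta\bigl(\Diractwist{\mathfrak{c}}{\alpha}(g)\bigr)\;-\;\eta\bigl(\Diractwist{\mathfrak{c}}{\mathrm{triv}}(g)\bigr).
\]
Applying the APS index theorem to $M\times[0,1]$ with an interpolating path of metrics, and using that $L_\alpha$ is flat so that the local index densities of the twisted and untwisted operators agree pointwise on the cylinder, one obtains $\tilde\rho_\alpha(M,g_1)-\tilde\rho_\alpha(M,g_0)\in\Z$ for any two metrics $g_0,g_1$. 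Hence $\tilde\rho_\alpha\bmod\Z$ is locally constant on the whole space of Riemannian metrics, a fortiori on the $sec\geq 0$ and $Ric>0$ subspaces, and descends to the respective moduli spaces after factoring out the finite action of $\pi_0(\mathrm{Diff}(M))$ on the pair $(\mathfrak{c},\alpha)$.

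Third, to show that the sequence $\tilde\rho_\alpha(M,g_{\bar a_i})$ attains infinitely many residues in $\R/\Z$, I would bound each $\BKRP{\bar a_i}$ by a natural $\spinc$-nullbordism $W_{\bar a_i}$ --- e.g.\ the $\Zp{2}$-quotient of the Milnor fibre of the Brieskorn singularity, or the disk bundle of a linear model of the involution --- to which $\mathfrak{c}$ and $L_\alpha$ extend. APS then converts $\tilde\rho_\alpha(M,g_{\bar a_i})$ into an explicit rational characteristic-number integral over $W_{\bar a_i}$, producing a rational function of $\bar a_i$; letting the exponents grow through a carefully chosen sequence should yield infinitely many distinct residues mod $\Z$. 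This handles $\mathcal{M}_{sec\geq 0}(M)$; the statement for $\mathcal{M}_{Ric>0}(M)$ then follows either by upgrading the Grove--Ziller metrics to pos Ricci metrics in the same path component (via a Searle--Wilhelm-type deformation) or by a parallel direct construction of pos Ricci metrics on the same Brieskorn quotients. The main obstacle, in my view, is the concluding number-theoretic step: the APS expression must be sharp enough to produce infinitely many distinct residues after absorbing the integer ambiguities coming from the choice of nullbordism, of $\spinc$ structure, and of the finite $\pi_0(\mathrm{Diff}(M))$-orbit on $(\mathfrak{c},\alpha)$.
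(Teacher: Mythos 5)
Your overall architecture (Brieskorn quotients carrying Grove--Ziller metrics, a relative eta-invariant of a twisted $\spinc$-Dirac operator, and a bordism computation over the Milnor fibre) matches the paper's, but the central mechanism you propose is broken. You reduce the rho-invariant mod $\Z$ and argue it is locally constant on the \emph{whole} space of metrics $\mathcal{R}(M)$. That space is convex, hence connected, so your invariant $\tilde\rho_\alpha\bmod\Z$ is a single constant depending only on $(M,\mathfrak{c},\alpha)$ and cannot distinguish any path components of anything. The integer jumps you are discarding (the spectral flow along the interpolating path) are exactly where the information lives. The correct device is the \emph{real-valued} relative eta-invariant together with the Lichnerowicz-type vanishing theorem: if the twisting bundle and the auxiliary $U(1)$-connection are flat and the metric on $M\times[0,1]$ has $scal>0$, then the APS index, its adjoint, and the boundary kernels all vanish, so $\widetilde{\eta}_\alpha$ is literally constant (in $\R$, not $\R/\Z$) on path components of $\mathcal{R}_{scal>0}(M)$ (Proposition \ref{PROP: eta invariant scal}). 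This also dissolves the ``integer ambiguities'' you worry about at the end: the invariant computes exactly to $-d/4$ and these values are pairwise distinct along $d\equiv d_0 \bmod 16$. You must then still connect $sec\geq 0$ to $scal>0$: a nonnegatively curved metric need not have positive scalar curvature, and a path in $\mathcal{R}_{sec\geq 0}$ need not lie in $\mathcal{R}_{scal>0}$. The paper handles this with the B\"ohm--Wilking Ricci flow result (a $sec\geq0$ metric on $\BKRP{d}$ evolves instantly to $Ric>0$), applied to a lifted path obtained from Ebin's slice theorem; your plan omits this step entirely.

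Two further points in your third step would fail as stated. First, the involution $\tau$ does \emph{not} act freely on the Milnor fibre $W_\epsi^6(d)$ --- it has exactly $d$ isolated interior fixed points --- so ``the $\Zp{2}$-quotient of the Milnor fibre'' is not a manifold and APS cannot be applied to it directly. The computation has to be done equivariantly upstairs: express $\widetilde{\eta}_\alpha$ of the quotient in terms of the equivariant eta-invariant $\eta(\Sigma)_\tau$ of the cover, and evaluate the latter by Donnelly's fixed point formula for manifolds with boundary, with local contribution $2^{-n}$ at each of the $d$ fixed points (coming from the Dolbeault symbol of the complex structure on $W_\epsi^6(d)$). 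Second, for that formula to reduce to the fixed-point sum you must kill the equivariant index term, which again requires a $\tau$-invariant metric of $scal>0$ on $W_\epsi^6(d)$, of product form near the boundary, whose boundary restriction lies in the same path component of $\mathcal{R}_{scal>0}$ as the Grove--Ziller metric. Producing such a metric is genuine work (Cheeger deformations with respect to the $O(3)$-action on the bordism, plus a path of $scal>0$ metrics joining it to $\tilde g^d$), and your proposal does not account for it.
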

It follows that the corresponding spaces of metrics, denoted by $\mathcal{R}_{sec\geq 0}(M)$ and $\mathcal{R}_{Ric > 0}(M)$, respectively, also have infinitely many path components.

There are up to orientation-preserving diffeomorphism precisely four homotopy $\R P^5$s. Each of these can be described in infinitely many ways as a quotient of a Brieskorn sphere of cohomogeneity one \cite{L71}. Grove and Ziller constructed for all of these quotients invariant metrics of nonnegative sectional curvature and asked whether $\mathcal{R}_{sec\geq 0}(M)$ has infinitely many components \cite[Thm. G, Problem 5.6]{GZ00}. Theorem A gives a positive answer to their question for path components. 

Theorem A in conjunction with work of Belegradek, Kwasik and Schultz \cite[Prop. 2.8]{BKS11} shows that for every closed manifold $M$, which is homotopy equivalent to $\R P^5$, the moduli space of complete metrics of nonnegative sectional curvature on the total space of either the trivial or the non-trivial real line bundle over $M$ has infinitely many path components.

In dimension $4k+1>5$ it is an open question which homotopy real projective spaces, besides the standard one, can carry a metric of nonnegative sectional curvature. In each dimension $4k+1$ there are homotopy real projective spaces which can be described, as before, in infinitely many ways as quotients of Brieskorn spheres and which carry metrics of positive Ricci curvature (as first shown by Cheeger \cite{C72}). Our method gives the following result for homotopy $\R P^{4k+1}$s.

\begin{theoremB}
For any $k\geq 1$ there are at least $2^{2k}$ closed manifolds of pairwise distinct oriented diffeomorphism type, which are homotopy equivalent to $\R P^{4k+1}$, and for which the moduli space of metrics of positive Ricci curvature has infinitely many path components.
\end{theoremB}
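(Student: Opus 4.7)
The plan is to realize the $2^{2k}$ homotopy $\R P^{4k+1}$s as quotients of Brieskorn spheres by free involutions, and then to distinguish path components of the moduli space by means of the eta-invariant of a Dirac operator twisted by a flat line bundle. For each odd integer $d$, consider the Brieskorn sphere $W_d := \{z_0^d + z_1^2 + \cdots + z_{2k+1}^2 = 0\} \cap S^{4k+3} \subset \C^{2k+2}$, a homotopy $(4k+1)$-sphere carrying a cohomogeneity-one action of $O(2) \times O(2k+1)$. The involution $(z_0, z_1, \ldots, z_{2k+1}) \mapsto (z_0, -z_1, \ldots, -z_{2k+1})$ acts freely on $W_d$ and commutes with the cohomogeneity-one action, so the quotient $N_d := W_d / \{\pm 1\}$ is a closed manifold of dimension $4k+1$, homotopy equivalent to $\R P^{4k+1}$, and of cohomogeneity one.

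Using the Browder--Livesay classification of free involutions on homotopy spheres together with a surgery-theoretic computation of the normal invariant in the structure set of $\R P^{4k+1}$, one shows that the family $\{N_d\}_{d \text{ odd}}$ realizes at least $2^{2k}$ pairwise non-diffeomorphic oriented types, with each realized type attained by infinitely many values of $d$. Applying Cheeger's construction (modified to respect the $\Z/2$-symmetry), each $W_d$ carries a $\Z/2$-invariant metric of positive Ricci curvature which descends to a positive Ricci metric $g_d$ on $N_d$. Fixing one representative $M$ per diffeomorphism type and pulling back via diffeomorphisms, one obtains an infinite family of positive Ricci metrics $\{g_d\}$ on $M$.

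To detect the components of $\mathcal{M}_{Ric > 0}(M)$, use the eta-invariant of the Dirac operator on $M$ twisted by the flat line bundle associated to the non-trivial character of $\pi_1(M) \cong \Z/2$. Since positive Ricci implies positive scalar, Lichnerowicz's formula ensures vanishing of the harmonic twisted spinors; the Atiyah--Patodi--Singer theorem together with a standard bordism argument then shows that this relative eta-invariant is locally constant on $\mathcal{R}_{Ric > 0}(M)$ and descends, modulo a controlled indeterminacy coming from the action of $\pi_0(\diff{M})$ on spin structures and on flat bundles, to a well-defined invariant of the moduli space. One then computes the eta-invariant of $(N_d, g_d)$ via equivariant index theory on $W_d$ --- using the $O(2) \times O(2k+1)$-symmetry and the $G$-spin theorem to derive a closed-form expression in $d$ --- and verifies that infinitely many distinct values modulo the indeterminacy occur among the odd $d$ yielding the same underlying manifold.

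The hard part is this last step: establishing both an explicit formula for the eta-invariant of $g_d$ in terms of $d$ (via the equivariant index computation on the Brieskorn variety) and controlling the finite indeterminacy coming from the mapping class group action, so that infinitely many values modulo this indeterminacy are produced. The topological side --- namely the count of $2^{2k}$ oriented diffeomorphism types among the $N_d$'s --- is a second substantial ingredient, but is considerably more standard, relying on well-developed surgery machinery for $\R P^n$.
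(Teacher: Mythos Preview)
Your overall strategy matches the paper's: Brieskorn quotients $M_d^{4k+1}$, at least $2^{2k}$ oriented diffeomorphism types each realized infinitely often, positive Ricci metrics $\tilde g^d$, and the relative eta-invariant of a twisted Dirac operator to separate path components. Two points need attention.

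First, a genuine technical issue: $\R P^{4k+1}$ (and its homotopy siblings) is \emph{not} spin, since $w_2\neq 0$; so there is no spin Dirac operator to use, and your references to ``spin structures'' and the ``$G$-spin theorem'' do not apply as stated. The paper works throughout with $\spinc$-Dirac operators. The $\spinc$-structure is obtained canonically from the complex (or almost complex) structure of a bordism bounding the Brieskorn sphere; the associated $U(1)$-bundle is the nontrivial line bundle over $M_d$, equipped with a flat connection. The indeterminacy you mention is then handled by noting there are exactly two topological $\spinc$-structures for the fixed orientation, so the subgroup $\mathcal D\subset\mathrm{Diff}(M_d)$ preserving the structure has finite index.

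Second, on the step you correctly flag as hard: the paper does \emph{not} compute the eta-invariant by equivariant index theory on the Brieskorn sphere itself. Instead one perturbs the defining polynomial to $f_d=\epsilon\neq 0$; the link $\Sigma_\epsilon^{4k+1}(d)$ is equivariantly diffeomorphic to $\Sigma_0^{4k+1}(d)$, but now bounds a genuine complex manifold $W_\epsilon^{4k+2}(d)$ on which the involution $\tau$ has exactly $d$ isolated fixed points. One then constructs (via Cheeger deformation by the $O(2k+1)$-action, or alternatively via Wraith's plumbing approach) a $\tau$-invariant metric of $scal>0$ on $W_\epsilon$ which is of product form near the boundary and restricts to a metric connected through $scal>0$ to $\tilde g^d$. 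Donnelly's Lefschetz-type fixed point formula for manifolds with boundary, together with the Lichnerowicz vanishing, reduces the equivariant eta-invariant $\eta(\Sigma_\epsilon)_\tau$ to a sum of local contributions $a_{p_i}=2^{-(2k+1)}$ at the $d$ fixed points (computed via the Dolbeault symbol, since the $\spinc$-structure comes from the complex structure). This yields $\widetilde\eta_\alpha(M_d,\tilde g^d)=-d/2^{2k}$ on the nose, and the values are already pairwise distinct, so no further control of indeterminacy is needed beyond the finite-index argument. The point you are missing is this passage to the smooth complex bordism and the accompanying metric extension; without it there is no clear route to the explicit value.
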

As before it follows that for each homotopy real projective space $M$ as in Theorem B the space $\mathcal{R}_{Ric > 0}(M)$ has infinitely many path components.

The methods of this paper are used to study path components of moduli spaces but do not give information on other topological invariants like homotopy or homology groups of positive degree (see \cite{TW17} for recent results in this direction for manifolds with {\em infinite} fundamental group). Our approach which utilizes eta-invariants uses in an essential way the fact that the manifolds under consideration are {\em not} simply connected.  This approach can also be used to study the moduli space for quotients of other links of singularities (details will appear in a forthcoming paper). The question whether the results for the homotopy $\R P^{4k+1}$s above also hold for their universal covers remains open.

This paper is structured as follows. In the next section we give an outline of the proofs of Theorem A and Theorem B. In Section \ref{Section: Spaces, operators and eta-invariant} we briefly review spaces and moduli spaces of metrics and describe a relative eta-invariant for $\spinc$-manifolds which will be used to distinguish path components of such spaces of metrics. Section \ref{Section: Review of Brieskorn spheres} is devoted to the construction and the relevant properties of certain Brieskorn spheres. In Section \ref{Section: Cheeger def} we collect basic properties of Cheeger deformations and apply these to the construction of some suitable metrics of positive curvature on Brieskorn spheres and related bordisms. In the last two sections we put this information together and prove Theorem A and Theorem B, respectively.

\bigskip
\noindent
{\em Acknowledgements.} We like to thank Llohann Speran\c{c}a for explaining to us how $G$-$G$ manifolds can be used to construct metrics of positive Ricci curvature, and many thanks also to David Wraith for helpful comments on his work on moduli spaces in \cite{W11}. We are grateful to Uwe Semmelmann, Wilderich Tuschmann, Michael Wiemeler, Fred Wilhelm, David Wraith and the anonymous referees for useful remarks and suggestions.
 This work was supported in part by the SNSF-Project 200021E-172469 and the DFG-Priority programme  {\em Geometry at infinity} (SPP 2026). The second named author received support from the MINECO grant MTM2017- 85934-C3-2-P.


\section{Outline of proofs}\label{Section: Outline}

The goal of this section is to give an outline of the proofs of Theorem A and Theorem B, all the references and details can be found in the following sections.

\smallskip

In the proof of Theorem A we use the following description of homotopy ${\mathbb R} P^5$s. Consider the polynomial $f_d:{\mathbb C} ^4\to {\mathbb C},  (z_1,z_2,z_3,z_4)\mapsto z_1^2+z_2^2+z_3^2+z_4^d$, with $d\geq 3$ odd. Note that $f_d$ has in $0$ an isolated singularity. Its link $\Sigma_0^5 (d)$ is defined as the intersection of $f_d^{-1}(0)$ with the unit sphere. Brieskorn showed that $\Sigma_0^5 (d)$ is a homotopy $5$-sphere, which by the work of Smale is diffeomorphic to $S^5$.

The link $\Sigma_0^5 (d)$ comes with a natural orientation and a cohomogeneity one action by $\so2\times O(3)$ with kernel $\pm(1, Id)$. The elements $\pm (1,-Id)$ act freely by the involution $\tau :(z_1,z_2,z_3,z_4)\mapsto (-z_1,-z_2,-z_3,z_4)$ on $\Sigma_0^5 (d)$. The quotient manifold $\BKRP{d}:=\Sigma_0^5 (d)/\tau$ is a homotopy ${\mathbb R} P^5$. L\'opez de Medrano has shown that each homotopy ${\mathbb R} P^5$ can be obtained in this way. Moreover, $\BKRP{d}$ and $\BKRP{d^\prime}$ are diffeomorphic as oriented manifolds if $d\equiv d^\prime \bmod 16$.

Grove and Ziller constructed on each quotient $\BKRP{d}$ a cohomogeneity one metric $\tilde{g}^d$ of nonnegative sectional curvature. By the work of B\"ohm and Wilking, such a metric evolves immediately to a metric of positive Ricci curvature under the Ricci flow.

In the following we will use the notation $sec\geq 0$, $scal>0$ and $Ric >0$ for nonnegative sectional, positive scalar and positive Ricci curvature, respectively.

We restrict to one homotopy real projective space $\BKRP{d_0} $. By pulling back the metric of $sec\geq 0$ (resp. $Ric >0$) on $\BKRP{d_0+16l}$ to $\BKRP{d_0}$, $l\in \N$, via an orientation preserving diffeomorphism one obtains infinitely many elements in the space of metrics $\mathcal{R}_{sec\geq 0}(\BKRP{d_0})$ (resp. $\mathcal{R}_{Ric > 0}(\BKRP{d_0})$). We will show that these elements represent infinitely many path components of the moduli spaces $\mathcal{M}_{sec\geq 0}(\BKRP{d_0})$ and $\mathcal{M}_{Ric > 0}(\BKRP{d_0})$, respectively. 

\smallskip

The strategy to distinguish path components of $\mathcal{M}_{sec\geq 0}(\BKRP{d_0})$ goes as follows (the argument for $Ric >0$ is similar and easier). For technical reasons we work with the quotient space $\mathcal{R}_{sec\geq 0}(\BKRP{d_0})/ \cal D$, where $\mathcal D$ is the subgroup of diffeomorphisms of $\BKRP{d_0}$ which preserve the topological $\spinc$-structure (which we shall fix below). 

We argue by contradiction: suppose that, for a pair of metrics as above, their classes in $\mathcal{R}_{sec\geq 0}(\BKRP{d_0})/ \cal D$ can be joined by a path. By Ebin's slice theorem such a path lifts to a path in $\mathcal{R}_{sec\geq 0}(\BKRP{d_0})$ between \pullback s of such metrics by elements in $\mathcal D$. Applying the Ricci flow to this path we obtain a new path with the same endpoints and whose interior lies in $\mathcal{R}_{Ric > 0}(\BKRP{d_0})$; in particular the entire path lies in $\mathcal{R}_{scal > 0}(\BKRP{d_0})$, since the Grove-Ziller metrics can be chosen to be of positive scalar curvature. 

In order to arrive at a contradiction we use a suitable relative eta-invariant which, as noted by Atiyah, Patodi and Singer, is constant along paths in $\mathcal{R}_{scal > 0}(\BKRP{d_0})$. This invariant is defined \wrt \ a certain twisted $\spinc$-Dirac operator on $\BKRP{d_0}$, and it is preserved under \pullback s by elements of $\cal D$. We will show that this invariant takes pairwise distinct values for our metrics above. More precisely, we prove that the relative eta-invariant of $(\BKRP{d},\tilde{g}^d)$ is equal to $-d/4$.
 
It follows that $\mathcal{R}_{sec\geq 0}(\BKRP{d_0})/ \cal D$ has infinitely many path components. The same statement holds for the usual moduli space $\mathcal{M}_{sec\geq 0}(\BKRP{d_0})$, since $\mathcal D $ has finite index in the diffeomorphism group $\text{Diff}(\BKRP {d_0})$. 

\smallskip

The computation of the relative eta-invariant of $(\BKRP{d},\tilde{g}^d)$ goes as follows. First, we describe the topological $\spinc$-structure of $\BKRP{d}$. It turns out to be more convenient to move away from the singular fiber $f_d^{-1}(0)$ and to consider instead the fiber $f_d^{-1}(\epsi )$ for some small $\epsi \neq 0$. The intersection of $f_d^{-1}(\epsilon)$ with the unit sphere is again a manifold denoted by $\Sigma_\epsi ^5 (d)$ which is diffeomorphic to $\Sigma_0^5 (d)$; the upshot of this construction is that $\Sigma_\epsi ^5 (d)$ bounds a complex manifold $W_\epsi^6(d)$ without singularities. The group $\mathbb {Z}_{2d}\times O(3)$ still acts on $\Sigma_\epsi ^5 (d)$ and also acts holomorphically on $W_\epsi ^6(d)$. The involution $\tau$ acts freely on $\Sigma_\epsi ^5 (d)$ and the quotient $\BKRPeps{d}{\epsi}:=\Sigma_\epsi ^5 (d)/\tau$ is diffeomorphic to $\BKRP{d}$. The equivariant complex structure on $W_\epsi^6(d)$ induces equivariant $\spinc$-structures on $W_\epsi^6(d)$, $\Sigma_\epsi ^5 (d)$, $\BKRPeps{d}{\epsi}$ and hence a topological $\spinc$-structure on $\BKRP{d}$. 

Second, we describe auxiliary metrics. Following ideas of Cheeger, Lawson, Yau and others we use the action by $\mathbb {Z}_{2d}\times O(3)$ to construct compatible invariant metrics of $scal > 0$ on $\BKRPeps{d}{\epsi}$, $\Sigma_\epsi^5 (d)$ and $W_\epsi^6(d)$ with the following crucial property: the metric on $\BKRPeps{d}{\epsi}$ induces a metric on $\BKRP{d}$ which is in the same path component of $\mathcal{R}_{scal> 0}(\BKRP{d})$ as the Grove-Ziller metric $\tilde{g}^d$. Hence their relative eta-invariants are equal.

The relative eta-invariant of the metric on $\BKRPeps{d}{\epsi}$ is computed in two steps: firstly, in terms of equivariant eta-invariants by lifting the information to its covering $\Sigma_\epsi^5 (d)$, and secondly, by using a Lefschetz fixed point formula for manifolds with boundary in terms of local topological data at the $\tau$-fixed points in $W_\epsi^6(d)$. It turns out that the topological data is the same at every $\tau$-fixed point, and moreover it is independent of $d$. Thus the relative eta-invariant only depends on the number of $\tau$-fixed points in $W_\epsi^6(d)$, which is exactly $d$, and can be computed to be equal to $-d/4$.

\bigskip

The proof of Theorem B is similar in spirit. We consider the link $\Sigma_0^{4k+1}(d)$ of the singularity of ${\mathbb C} ^{2k+2}\to {\mathbb C},  (z_1,\ldots,z_{2k+2})\mapsto z_1^2+\ldots +z_{2k+1}^2+z_{2k+2}^d$ which, as shown by Brieskorn, is a homotopy sphere for $d$ odd. The group $\so2\times O(2k+1)$ acts by cohomogeneity one on $\Sigma_0^{4k+1}(d)$. The involution $\tau :  (z_1,\ldots,z_{2k+1},z_{2k+2})\mapsto (-z_1,\ldots,-z_{2k+1},z_{2k+2})$ acts freely on $\Sigma_0^{4k+1}(d)$ and the quotient, denoted by $M^{4k+1}_d$, is homotopy equivalent to $\R P^{4k+1}$. An inspection of the normal type of $M^{4k+1}_d$ shows the following: the sequence $\{M^{4k+1}_d\}_{d\in\N}$ contains at least $2^{2k}$ distinct oriented diffeomorphism types, all of which are attained by an infinite subsequence. 

Several constructions of metrics of $Ric > 0$ have been carried out for the spaces $M^{4k+1}_d$, starting with work of Cheeger. We use two different approaches, following ideas of Speran\c{c}a and Wraith, respectively, to construct suitable metrics $\tilde{g}^d$ of $Ric > 0$ on $M^{4k+1}_d$. Moreover, the lift of $\tilde{g}^d$ to $\Sigma_0^{4k+1}(d)$ extends to a certain bordism in a way that allows us to compute the relative eta-invariant of $(M^{4k+1}_d,\tilde{g}^d)$ as in Theorem A. It turns out that the latter only depends on $d$ and is equal to $-d/2^{2k}$. 

Let us fix one such manifold $M_{d_0}^{4k+1}$. As stated above, there are infinitely many manifolds $M_{d}^{4k+1}$ which are oriented diffeomorphic to $M_{d_0}^{4k+1}$, thus we can pull back the metrics $\tilde{g}^d$ to $M_{d_0}^{4k+1}$. The relative eta-invariants of these metrics are pairwise distinct. Thus, the same arguments as in the proof of Theorem A show that the metrics represent infinitely many path components of $\mathcal{M}_{Ric > 0}(M_{d_0}^{4k+1})$.


\section{Spaces of metrics, Dirac operators and eta-invariants for $Spin^\ccc$-manifolds}\label{Section: Spaces, operators and eta-invariant}

In this section we first briefly review spaces and moduli spaces of metrics satisfying certain lower curvature bounds (Section \ref{SS: spaces of metrics}). We then describe a relative eta-invariant for $\spinc$-manifolds (Section \ref{SS: APS eta}) which will be used to distinguish path components of such spaces of metrics. We finally recall some tools for computing this invariant (Section \ref{SS: donnelly}).

\subsection{Spaces and moduli spaces of metrics}\label{SS: spaces of metrics}

Let $M$ be a closed manifold, i.e., compact without boundary. We denote by $\mathcal{R}(M)$ the space of all Riemannian metrics on $M$ and endow $\mathcal{R}(M)$ with the smooth topology of uniform convergence of all derivatives. Note that $\mathcal{R}(M)$ is a convex cone, i.e., given $g_1 , g_2\in \mathcal{R}(M)$ and real numbers $a,b>0$, we have $ag_1+bg_2\in \mathcal{R}(M)$. In particular, $\mathcal{R}(M)$ is contractible and locally path-connected.

The diffeomorphism group of $M$, denoted by $\text{Diff}(M)$, acts on $\mathcal{R}(M)$ by pulling back metrics. Note that in general this action is not free and has different orbit types: the isotropy group of this action at a point $g\in\mathcal{R}(M)$ equals the isometry group and this can have a different isomorphism type at different metrics. The orbit space $
\mathcal{M}(M):=\mathcal{R}(M)/\text{Diff}(M)$
equipped with the quotient topology is called the {\em moduli space of metrics} on $M$.

We will need the following two properties of the moduli space. Firstly, being the quotient of a locally path-connected space $\mathcal{M}(M)$ is locally path-connected as well. Secondly, it follows from Ebin's slice theorem \cite{E70} that the quotient map $\mathcal{R}(M)\to \mathcal{M}(M)$ has the path lifting property (see \cite[Prop. VII.6, p. 33]{B75}, \cite[\S 4]{W16} for this classical result).

Let $\mathcal{R}_{scal>0}(M)$, $\mathcal{R}_{Ric>0}(M)$ and $\mathcal{R}_{sec\geq 0}(M)$ denote the subspaces of metrics of positive scalar curvature, positive Ricci curvature and nonnegative sectional curvature, respectively. Since these spaces are invariant under the action of $\text{Diff}(M)$ we can consider the corresponding moduli spaces $\mathcal{M}_{scal>0}(M) =\mathcal{R}_{scal>0}(M)/\text{Diff}(M)$, $\mathcal{M}_{Ric>0}(M) =\mathcal{R}_{Ric>0}(M)/\text{Diff}(M)$ and $\mathcal{M}_{sec\geq 0}(M)  =\mathcal{R}_{sec\geq0}(M)/\text{Diff}(M)$.

Note that for the open subspaces $\mathcal{M}_{scal>0}(M)$ and $\mathcal{M}_{Ric>0}(M)$ of $\mathcal{M}(M)$ every connected component is also path-connected. Note also that the space of metrics under consideration has infinitely many path components if this is the case for its moduli space. For further details on (moduli) spaces of metrics we refer the reader to \cite{E70,TW15}.

\subsection{$Spin^\ccc$-Dirac operators, the APS-index theorem and eta-invariants}\label{SS: APS eta}

In this section we review the construction of an invariant introduced by Atiyah, Patodi and Singer which can be used to distinguish path components of $\mathcal{R}_{scal>0}(M)$ for certain $\spinc$-manifolds $M$. The construction involves Dirac operators, eta-invariants and the Atiyah-Patodi-Singer index theorem.

\subsubsection*{$Spin^\ccc$-Dirac operators}\label{S: spinc structure}

Let us first recall the construction of $\spinc$-structures on manifolds (for details see \cite{ABS64}, \cite[Appendix D]{LM89} and \cite[Appendix D.2]{GGK02}).

Let $(X,g)$ be an $n$-dimensional oriented Riemannian manifold
 and $P_{SO(n)}\to X$ the associated principal bundle of oriented orthonormal frames. Recall that the Lie group $Spin^\ccc  (n):=(Spin(n)\times U(1))/\{\pm (1,1)\}$ is part of the short exact sequence
\begin{equation}\label{EQ: spinc}
1\to \Z_2 \to Spin^\ccc (n) \xrightarrow {\rho \times (\; )^2} SO(n)\times U(1)\to 1
,\end{equation}
where $\rho: Spin(n)\to SO(n)$ denotes the non-trivial double covering.

A $\spinc$-structure on $(X,g)$ consists of a principal $U(1)$-bundle $P_{U(1)}\to X$ and a principal $Spin^\ccc (n)$-bundle $P_{Spin^\ccc (n)}\to X$ together with a $Spin^\ccc (n)$-equivariant bundle map $
P_{Spin^\ccc (n)} \longrightarrow P_{SO(n)} \times P_{U(1)}$. Hence, a $\spinc$-structure is a two-fold covering which restricted to a fiber can be identified (non-canonically) with the covering in \eqref{EQ: spinc}.

For a given principal $U(1)$-bundle, with associated complex line bundle $L$, such a structure exists if and only if $w_2(X)\in H^2(X,\Z_2)$ is the $\mathrm{mod}\, 2$ reduction of the first Chern class $c_1(L)\in H^2(X,\Z)$. In this case the different $\spinc$-structures for the given principal $U(1)$-bundle can be identified (non-canonically) with $H^1(X,\Z_2)$. Note that $P_{U(1)}$ and $L$ are associated to the principal $Spin^\ccc (n)$-bundle since $ P_{Spin^\ccc (n)}/Spin(n) \cong P_{U(1)}$.

In the special case where we consider an oriented Riemannian manifold $(M,g)$ homotopy equivalent to $\R P^{4k+1}$, a $\spinc$-structure exists if and only if $c_1(L)$ is the non-trivial class in $H^2(M,\Z)\cong \Z _2$, i.e., if $P_{U(1)}$ is non-trivial; and there are precisely two different $\spinc$-structures up to equivalence in this case.

The definition above depends on a Riemannian metric on $X$ and a Hermitian metric on $L$ inducing the reduction $U(1)\subset \C ^*$ of its structure group. If one strips away this metric information one obtains the following notion of a {\em topological} $\spinc$-structure which will be used to compare different $\spinc$-structures: Let $P_{GL_n^+(\R )}\to X$ denote the oriented frame bundle of $X$, let $P_{\C ^*}\to X$ be the principal $\C ^*$-bundle associated to $L$, let $\widetilde {GL}_n^+(\R )$ denote the non-trivial double cover of $GL_n^+(\R )$, define $\widetilde G:=(\widetilde {GL}_n^+(\R )\times \C ^*)/\{\pm (1,1)\}$ and consider the exact sequence $1\to \Z_2 \to\widetilde G\to GL_n^+(\R )\times \C ^*\to 1$ corresponding to \eqref{EQ: spinc}.
Then a topological $\spinc$-structure on $X$ for this data consists of a principal $\widetilde G$-bundle $P^{top}_{Spin^\ccc (n)}\to X$ together with a $\widetilde G$-equivariant bundle map $P^{top}_{Spin^\ccc (n)} \longrightarrow P_{GL_n^+(\R )} \times P_{\C ^*}$. We note that in the case of an oriented homotopy $\R P^{4k+1}$ there are precisely two different topological $\spinc$-structures up to equivalence provided $P_{\C ^*}$ is non-trivial.

A $\spinc$-structure induces a topological $\spinc$-structure in view of  the inclusion $\iota :SO(n)\times U(1)\hookrightarrow GL_n^+(\R )\times \C ^*$. Conversely, a topological $\spinc$-structure together with a Riemannian metric on $X$ and a Hermitian metric on $L$ determines a unique $\spinc$-structure. This follows since $\iota$ is a homotopy equivalence. In particular, for $P_{U(1)}$ fixed, a $\spinc$-structure on $(X,g_0)$ determines for any Riemannian metric $g_1$ on $X$ a unique $\spinc$-structure on $(X,g_1)$.

Let $\varphi :X\to X^\prime$ be an isometry between two Riemannian manifolds equipped with $\spinc$-structures and let $\varphi_*:P_{SO(n)}\to P_{SO(n)}^\prime$ denote the induced map between their oriented orthonormal frame bundles. Then the isometry $\varphi $  is said to {\em preserve the $\spinc$-structures} if $\varphi_*$ lifts to an isomorphism of the $\spinc$-structures. Next consider an orientation preserving diffeomorphism $\varphi :X\to X^\prime$ between two oriented manifolds equipped with topological $\spinc$-structures. Let $\varphi_*:P_{GL_n^+(\R )}\to P_{GL_n^+(\R )}^\prime$ denote the induced map between oriented frame bundles. Then the diffeomorphism $\varphi $ is said to {\em preserve the topological $\spinc$-structures} if $\varphi_*$ lifts to an isomorphism of the topological $\spinc$-structures.

For later reference we note that if $X$ is a complex manifold (or, more generally, almost complex manifold) then the complex structure together with a Hermitian metric on $TX$ induces a $\spinc$-structure on $(X,g)$ for which the principal $U(1)$-bundle is associated to the determinant line bundle (see \cite{ABS64}, \cite[p. 392]{LM89}).

For a given $\spinc$-structure one can construct the spinor bundle $S(X)$
 over $X$ (see \cite[D.9]{LM89}). The Levi-Civita connection of $(X,g)$ together with a chosen unitary connection $\nabla ^\ccc $ on $P_{U(1)}\to X$ determine a connection $\nabla$ on $S(X)$. In this situation there is an associated $\spinc$-Dirac operator
$$
D_X:\Gamma (S(X))\to \Gamma (S(X)\otimes T^*X)\to  \Gamma (S(X)\otimes TX)\to \Gamma (S(X)),$$
where the first map is the connection $\nabla$, the second map uses the isomorphism given by the metric $g$ and the last map is induced from Clifford multiplication. Note that the spinor bundle and the associated $\spinc$-Dirac operator are determined by the topological $\spinc$-structure, the Riemannian metric on $X$, the Hermitian metric and the connection $\nabla ^\ccc $.

The situation above can be generalized as follows. Given a Hermitian complex vector bundle $E\to X$ with a Hermitian connection $\nabla ^E$, we can form the tensor product $S(X)\otimes E$ which carries a connection $\hat \nabla $ induced by $\nabla $ and $\nabla ^E$ and can consider the {\em twisted} Dirac operator ${\Diractwist X E}:\Gamma(S(X)\otimes E)\to \Gamma(S(X)\otimes E)$
essentially defined as before.

\subsubsection*{Atiyah-Patodi-Singer index theorem}

Suppose now $(X,g_X)$ is a compact $\spinc$-manifold with non-empty boundary $\partial X=M$ such that the metric $g_X$ is of product form near the boundary $M$, and denote by $g_M$ the induced metric on $M$. We fix a connection $\nabla ^\ccc $ on the associated principal $U(1)$-bundle over $X$ which is constant in the normal direction near the boundary.

Suppose the dimension of $X$ is even. Then $S(X)$ splits as a direct sum $S(X)=S^+(X)\oplus S^-(X)$ and the $\spinc$-Dirac operator on $(X,g_X)$ restricts to an operator ${\mathfrak D}^+:\Gamma (S^+(X))\to \Gamma (S^-(X))$. The restriction of $S^+(X)$  and ${\mathfrak D}^+$ to the boundary $M$ can be identified with the spinor bundle and the $\spinc$-Dirac operator $D_M:\Gamma (S(M))\to \Gamma (S(M))$ on $(M,g_M)$ \wrt \ the induced $\spinc$-structure and connection on the boundary.
Let $P:\Gamma (S^+(X)\vert _{M})\to \Gamma (S^+(X)\vert _{M})$ denote the orthogonal projection onto the space spanned by the eigenfunctions of $D_{M}$ for nonnegative eigenvalues and, following Atiyah, Patodi and Singer, consider the restriction to sections $\phi\in \Gamma(S^+(X))$ with $\phi\vert_{M}\in \ker P$ (\emph{APS-boundary condition}). After imposing the APS-boundary condition the operator ${\mathfrak D}^+$ has finite dimensional kernel and will be denoted by $D_X^+$. Similarly, the formal adjoint of ${\mathfrak D}^+$ (defined via bundle metrics) subject to the adjoint APS-boundary condition has finite dimensional kernel and will be denoted by $(D_X^+)^*$. The index of $D_X^+$ is defined as $\ind \, D_X^+:= \dim\ker \, D_X^+ - \dim\ker \,  (D_X^+)^* \in \Z $ (see \cite{APSI75} for details).

The APS-index theorem expresses the index of $D_X^+$ in terms of geometrical data of $X$ and the operator $D_{M}$. More precisely (\cite{APSI75,APSII75}):
\begin{equation}\label{EQ: APS}
\ind \, D_X^+ = \left ( \int _X e^{\frac{1}{2}c}\hat{{\cal A}}(X,g_X) \right ) - \frac{h(M,g_M) + \eta(M,g_M)}{2}
\end{equation}
We proceed to explain the terms appearing in the right-hand side of \eqref{EQ: APS}. Here $e^x$ equals the power series $\sum _{i\geq 0} \frac{x^i}{i!}$ of a form $x$ and $c=\frac 1 {2\pi } \Omega $ is the first Chern form associated to
$\nabla ^\ccc $.
The term $\hat{{\cal A}}(X,g_X)$ represents the $\hat {\cal A}$-series evaluated on the Pontryagin forms $p_i(X,g_X)$.
 Whereas the integral depends on geometric data on $(X,g_X)$, the remaining two terms depend only on $D_M$.

The term $h(M,g_M)$ denotes the dimension of the kernel of
$D_M$.
The quantity $\eta(M,g_M)$ is the {\em eta-invariant} of $D_M$, which measures the spectral asymmetry of $D_M$.
The eta function is defined by the series
\begin{equation}\label{EQ: eta definition}
\eta(z):=\sum_\lambda \frac{\sign(\lambda)}{\vert\lambda \vert^{z}}, \quad \text{ for } z\in\C ,
\end{equation}
where the sum is over all the non-zero eigenvalues $\lambda$ of $D_M$. It can be shown that this series converges absolutely for real part $\Re (z)\gg 0$ and extends in a unique way to a meromorphic function on $\C$, also denoted by $\eta$, which is holomorphic at $z=0$ \cite[\S 2]{APSI75}. The eta-invariant appearing in formula \eqref{EQ: APS} is just $\eta(M,g_M):=\eta(0)$.

Next we discuss the twisted situation. Let $E\to X$ be a Hermitian complex vector bundle with a Hermitian connection $\nabla ^E$ which is constant in the normal direction near the boundary. Again, for $X$ of even dimension we have a twisted operator $\Gamma(S^+(X)\otimes E)\to \Gamma(S^-(X)\otimes E)$.
Its restriction after imposing APS-boundary conditions will be denoted by ${\DiractwistAPS X E}$. In this situation ${\DiractwistAPS X E}$ has finite index and the APS-index formula \eqref{EQ: APS} takes the form
\begin{equation}\label{EQ: APS twisted}
\ind \, {\DiractwistAPS X E}  = \left ( \int _X ch(E,\nabla ^E) e^{\frac{1}{2}c}\hat{{\cal A}}(X,g_X) \right ) - \frac{h_{E}(M,g_M) + \eta_E(M,g_M)}{2},
\end{equation}
where $ch(E,\nabla ^E)$ is the Chern character for the Chern forms of $(E,\nabla ^E)$ and $h_{E}(M,g_M)$ is the dimension of the kernel of the twisted $\spinc$-Dirac operator ${\Diractwist M {E\vert_M}}$ on $(M,g_M)$. Finally, $\eta_E(M,g_M)$ is the eta-invariant of ${\Diractwist M {E\vert_M}}$. It is defined as in the non-twisted case, but now the sum in \eqref{EQ: eta definition} is over all the non-zero eigenvalues $\lambda$ of ${\Diractwist M {E\vert_M}}$.

\subsubsection*{Dirac operators and positive scalar curvature}

One of the very interesting features of the $\spinc$-Dirac operator of a Riemannian manifold is its relation to the scalar curvature of the metric, as shown by Lichnerowicz. More precisely, the presence of positive scalar curvature is closely related to the injectivity of the operators considered before. For further reference we point out the following well-known crucial consequences for the $\spinc$-Dirac operators $D_M$, $D_X^+$ and their twisted versions (see \cite{L63}, \cite{ASIII68}, \cite[\S 3]{APSII75}, \cite[p. 398, II.8.17 and Appendix D]{LM89}).

\begin{scaltheorem}\label{REMARK: Dirac injective}
\begin{enumerate}
\item Let $(M,g_M)$ be a closed $\spinc$-manifold, $\nabla ^\ccc $ a connection on the associated principal $U(1)$-bundle and $D_M$ its $\spinc$-Dirac operator. If $(M,g_M)$ has $scal>0$ and $\nabla ^\ccc $ has vanishing curvature, then $D_M$ is injective. The same is true if the operator is twisted with a flat bundle $E$. In particular, $h(M,g_M)$ and $h_{E}(M,g_M)$ vanish in this situation.
\item Let $(X,g_X)$ be an even-dimensional compact $\spinc$-manifold with boundary, $g_X$ of product form near the boundary and $\nabla ^\ccc $ a connection of the associated principal $U(1)$-bundle which is constant in the normal direction near the boundary. If $(X,g_X)$ has $scal>0$ and $\nabla ^\ccc $ has vanishing curvature, then $D_X^+ $ and its formal adjoint $(D_X^+)^*$ are injective.  The same is true if the operator is twisted with a flat bundle $E$. In particular, $\ind \, D_X^+$ and $\ind \, {\DiractwistAPS X E}$ vanish in this situation.
\end{enumerate}\qed
\end{scaltheorem}

Recall that a connection on a vector bundle is {\em flat} if its curvature tensor vanishes. A {\em flat vector bundle} is a vector bundle together with a flat connection. It is well known that a complex $m$-dimensional flat vector bundle $E\to X$ must be of the form $E= \hat X\times_{\pi_1(X)} \C^m$, where $\hat X$ denotes the universal cover of $X$ and $\pi_1(X)$ acts diagonally on the product via a linear representation on the $\C^m$ factor (see for example \cite[\S 2]{P81}).

\subsubsection*{An invariant of components of the space of positive scalar curvature metrics}

Let $(M,g_M)$ be a closed connected Riemannian manifold of odd dimension equipped with a $\spinc$-structure, suppose $\nabla ^\ccc $ is a flat connection of the associated principal $U(1)$-bundle and let $\alpha:\pi_1(M)\to U(k)$ be a unitary representation, so that $\alpha$ determines a flat vector bundle (which we denote by $\alpha$ as well). Following Atiyah, Patodi and Singer we define the {\em relative eta-invariant} (also called rho-invariant) as
$$
\widetilde{\eta}_\alpha (M,g_M) := \eta_\alpha (M,g_M) - k\cdot \eta (M, g_M), \qquad\text{ where } k=\text{rank} (\alpha).
$$
This invariant is crucial for our study of the space $\mathcal{R}_{scal>0}(M)$. In fact, as pointed out in \cite[p. 417]{APSII75} invariants like the one above are constant on path components of $\mathcal{R}_{scal>0}(M)$ and, hence, can be applied to distinguish them.
This idea has been used (and refined) to study the space and moduli space of metrics of $scal>0$ for manifolds with non-trivial fundamental group starting with the work of Botvinnik and Gilkey in \cite{BG95,BG96}. For the convenience of the reader we briefly explain the idea in the context of $\spinc$-manifolds. Let us first point out the following application of the APS-index theorem and Theorem \ref{REMARK: Dirac injective} to $scal>0$.

\begin{lemma}\label{LEMMA: twisted eta zero} Let $(M,g_M)$, $\nabla ^\ccc $ and $\alpha $ be as above. Suppose $(X,g_X)$ is a connected compact $\spinc$-manifold with boundary $M$ such that the $\spinc$-structure on $X$ induces the one on $M$, the flat connection $\nabla ^\ccc $ extends to a flat connection on the associated principal $U(1)$-bundle on $X$ which is near the boundary constant in the normal direction, $\alpha$ extends to a representation $\beta: \pi_1(X)\to U(k)$ and $g_X$ is of product form near the boundary and extends $g_M$. If $(X,g_X)$ has $scal>0$ then $\widetilde{\eta}_\alpha (M,g_M)=0$.
\end{lemma}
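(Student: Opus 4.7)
The plan is to apply the APS-index formula twice---once to the untwisted $\spinc$-Dirac operator $D_X^+$, and once to the Dirac operator twisted by the flat bundle $\beta$ extending $\alpha$---and then take the difference. Because flat bundles have trivial Chern character beyond the rank, the geometric integrals are proportional, and because of $scal>0$ plus the Vanishing Theorem, both indices and both kernel-dimensions vanish, leaving exactly the identity $\eta_\alpha = k\cdot\eta$.

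More precisely, first I would note that since $g_X$ has product form near $\partial X = M$, the scalar curvature of $g_M$ agrees with the restriction of $\mathrm{scal}(g_X)$ at the boundary and is therefore positive as well. The flat connection $\nabla^\ccc$ on the principal $U(1)$-bundle restricts to a flat connection on $M$, and likewise $\beta$ restricts to the flat bundle $\alpha$. Thus the hypotheses of the Vanishing Theorem hold both for $(M,g_M)$ and for $(X,g_X)$, with and without the twist, giving
\[
h(M,g_M) = h_\alpha(M,g_M) = 0, \qquad \ind D_X^+ = \ind D^+_{X,\beta} = 0.
\]

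Next, I would feed this into \eqref{EQ: APS} and \eqref{EQ: APS twisted}. Since $\nabla^\beta$ is flat, its curvature vanishes identically, so $ch(\beta,\nabla^\beta) = k$ as a form on $X$. The two APS formulas therefore collapse to
\[
0 \;=\; \int_X e^{\tfrac12 c}\,\hat{\mathcal A}(X,g_X) \;-\; \tfrac{1}{2}\,\eta(M,g_M),
\]
\[
0 \;=\; k\int_X e^{\tfrac12 c}\,\hat{\mathcal A}(X,g_X) \;-\; \tfrac{1}{2}\,\eta_\alpha(M,g_M).
\]
Multiplying the first identity by $k$ and subtracting from the second eliminates the geometric integral entirely and yields $\eta_\alpha(M,g_M) = k\cdot \eta(M,g_M)$, i.e.\ $\widetilde{\eta}_\alpha(M,g_M)=0$.

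The only real subtlety I anticipate is bookkeeping: verifying that the $\spinc$-structure, the Hermitian connection $\nabla^\ccc$, the product structure of $g_X$, and the extension of the representation $\alpha$ to $\beta$ all combine so that the operator $D^+_{X,\beta}$ subject to APS boundary conditions has boundary operator precisely the twisted $\spinc$-Dirac operator $D_{M,\alpha}$ on $M$ used in the definition of $\eta_\alpha(M,g_M)$. This is the content of the assumptions, but one must keep track of the fact that the flatness of $\nabla^\ccc$ and $\nabla^\beta$ is preserved near $\partial X$ (constant in the normal direction), which is exactly what makes the Vanishing Theorem applicable on both $X$ and $M$ simultaneously. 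Once this is in place, the proof is essentially a one-line cancellation as sketched above.
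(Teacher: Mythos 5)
Your proposal is correct and follows essentially the same route as the paper: compare the two APS-index formulas, observe that flatness of $\beta$ makes the integrand of the twisted formula $k$ times that of the untwisted one, and invoke the Vanishing Theorem to kill both indices and both boundary kernel dimensions (your remark that $scal(g_M)>0$ follows from the product form, so that part (1) of the Vanishing Theorem applies on $M$, is a point the paper leaves implicit). Nothing further is needed.
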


\noindent
\begin{proof} Consider the $\spinc$-Dirac operator $D_{X}^+$ and the twisted $\spinc$-Dirac operator ${\DiractwistAPS X \beta}$ on $(X,g_X)$. Note that the corresponding operators on the boundary are $D_{M}$ and $\Diractwist M \alpha $, respectively. Comparing the APS-index formulas \eqref{EQ: APS} and \eqref{EQ: APS twisted} for $\ind \, D_{X}^+$ and  ${\DiractwistAPS X \beta}$ one finds that the two integrals only differ by a factor $k$ since $\beta $ is flat.  Hence,
$$\ind \, {\DiractwistAPS X \beta} - k\cdot \ind \, D_{X}^+=- \left (\frac{h_{\alpha }(M,g_M) + \eta_\alpha (M,g_M)}{2}\right ) +k\cdot \left( \frac{h(M,g_M) + \eta(M,g_M)}{2}\right).$$
By Theorem \ref{REMARK: Dirac injective} $
\ind \, D_X^+ , h(M,g_M), \ind \, {\DiractwistAPS X \beta}$ and $h_{\alpha}(M,g_M)$ vanish.
Hence, $\widetilde{\eta}_\alpha (M,g_M)=0$.\end{proof}

Replacing $X$ and its boundary $M$ by $M\times I$ and $M \times \partial I$, respectively, one obtains
\begin{proposition}\label{PROP: eta invariant scal}
Let $M$ be a closed connected Riemannian manifold of odd dimension equipped with a $\spinc$-structure, $\nabla ^\ccc $ a flat connection on the associated principal $U(1)$-bundle and $\alpha:\pi_1(M)\to U(k)$ a unitary representation.
Let  $g_0$ and $g_1$ be two metrics of $scal>0$ which are in the same path component of $\mathcal{R}_{scal>0}(M)$. Then, $\widetilde{\eta}_\alpha (M,g_0)=\widetilde{\eta}_\alpha (M,g_1)$.
\end{proposition}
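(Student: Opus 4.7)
The plan is to carry out exactly the construction hinted at in the paragraph following Lemma \ref{LEMMA: twisted eta zero}: realize a path of $scal>0$ metrics on $M$ as the boundary data of a single $scal>0$ metric on the cylinder $X := M\times I$, and then invoke Lemma \ref{LEMMA: twisted eta zero} to conclude that the boundary contribution (which unfolds to the difference $\widetilde{\eta}_\alpha(M,g_1)-\widetilde{\eta}_\alpha(M,g_0)$) must vanish.

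First I would choose a continuous path $\{g_t\}_{t\in[0,1]}\subset\mathcal{R}_{scal>0}(M)$ joining $g_0$ and $g_1$, and, after a smooth reparameterization in $t$, assume $g_t$ is constant in $t$ in neighborhoods of $t=0$ and $t=1$. For $T>0$ I would consider on $X=M\times[0,T]$ the Riemannian metric
$$g_X := g_{s/T}+ds^2,\qquad s\in[0,T].$$
Near each boundary component this is automatically a product metric of the form $g_i+ds^2$ ($i=0,1$). A standard warped-product computation shows that the scalar curvature of $g_X$ differs from that of the slice metric $g_{s/T}$ by terms of order $T^{-2}$; since $M\times[0,1]$ is compact and $scal(g_t)>0$ for every $t$, the infimum of the slice scalar curvatures is strictly positive, so for $T$ sufficiently large $g_X$ has $scal>0$ on all of $X$.

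Next, I would equip $X$ with the auxiliary data required by Lemma \ref{LEMMA: twisted eta zero} by pulling back along the projection $\pi:X\to M$: the $\spinc$-structure, the flat unitary connection $\nabla^\ccc$ on the associated principal $U(1)$-bundle, and (using $\pi_1(X)=\pi_1(M)$) the representation $\alpha$, now viewed as a representation $\beta$ of $\pi_1(X)$ of the same rank $k$. These pullbacks are constant in the normal direction near $\partial X$, flat, and restrict on each boundary component to the original data on $M$. All hypotheses of Lemma \ref{LEMMA: twisted eta zero} are therefore satisfied, and applying it yields that the relative eta-invariant of the boundary vanishes. Since $\partial X=M\sqcup(-M)$ carries the metrics $g_1$ and $g_0$ respectively, the additivity of the eta-invariant under disjoint union together with its sign reversal under orientation reversal gives $\widetilde{\eta}_\alpha(M,g_1)-\widetilde{\eta}_\alpha(M,g_0)=0$, as desired.

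The only non-formal step is the construction of the $scal>0$ metric on the cylinder from a path of $scal>0$ metrics; this is the classical stretching argument going back to Gromov and Lawson, and it is the main technical obstacle of the proof. Everything else is bookkeeping of the auxiliary $\spinc$- and representation-theoretic data and a direct invocation of Lemma \ref{LEMMA: twisted eta zero}.
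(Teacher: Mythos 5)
Your proposal is correct and follows essentially the same route as the paper: form the cylinder $M\times I$ with the path metric plus $dt^2$, stretch it to achieve $scal>0$ (the paper cites \cite[Lemma 3]{GL80} for exactly this step), extend the $\spinc$-data and $\alpha$ canonically, and apply Lemma \ref{LEMMA: twisted eta zero} together with additivity and orientation-sensitivity of the eta-invariant. The only cosmetic difference is that the paper also notes one must slightly \emph{perturb} the continuous path to make it smooth in $t$ (not just reparameterize), but this is a minor technicality.
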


\noindent
\begin{proof} Let $\gamma(t)$, $t\in [0,1]$, be a continuous path in $\mathcal{R}_{scal>0}(M)$ connecting $g_0$ and $g_1$. We first slightly perturb and rescale the path to obtain a smooth
 path $\hat \gamma (t)$ in $\mathcal{R}_{scal>0}(M)$ which connects $g_0$ and $g_1$ and which is constant near $0$ and $1$.

Consider the manifold $X:= M\times [0,1]$ equipped with the metric $g_X=\hat \gamma(t) + dt^2$. Note that $g_X$ is of product form near the boundary. This metric might not be of $scal>0$ but it is well known that we can deform it (by stretching the interval) to a metric of $scal>0$, also denoted by $g_X$, which restricts to the metrics $\gamma(0)$ and $\gamma(1)$ near the boundary components (see \cite[Lemma 3]{GL80}). 

The boundary $\partial X$ is equal to $M \cup - M$, where $-M$ denotes $M$ with opposite orientation and the induced metric $g_{\partial X}$ on the boundary is just the metric $\gamma(0)$ over $M$ and the metric $\gamma(1)$ over $M$ (with the corresponding orientations).

The $\spinc$-structure on $M$ induces a topological $\spinc$-structure on $M$ which extends canonically to a topological $\spinc$-structure on $X$. The principal $U(1)$-bundle and its flat connection $\nabla ^\ccc $ also extend canonically to $X$. Recall that this data together with the metric $g_X$ define a $\spinc$-structure on $X$ and a flat connection on the associated principal $U(1)$-bundle. The flat vector bundle $\alpha$ over $M$ extends canonically to the flat bundle $\alpha\times [0,1]$ over $X$. Hence, by Lemma \ref{LEMMA: twisted eta zero} we have
$\widetilde{\eta}_\alpha (\partial X,g_{\partial X}) = 0$.
On the other hand, eta-invariants are additive for disjoint unions and are sign-sensitive to orientation, so that $
\widetilde{\eta}_\alpha (\partial X,g_{\partial X}) =\widetilde{\eta}_\alpha (M,\gamma(0))-\widetilde{\eta}_\alpha (M,\gamma(1))$.
The last two identities together yield that $
\widetilde{\eta}_\alpha (M,g_0)=\widetilde{\eta}_\alpha (M,g_1)$.
\end{proof}

\begin{remark}
\normalfont
If $(M,g_M)$ is the boundary of a (possibly simply connected) spin-manifold $(X,g_X)$ of dimension $4k+2$ (and the metric is of product form near the boundary) then the integral in the APS-index formula \eqref{EQ: APS} vanishes for dimensional reasons. In this situation the same argument as above shows that the eta-invariant $\eta (M,g_M)$ of the Dirac operator $D_M$ only depends on the path component of $g_M$ in $\mathcal{R}_{scal>0}(M)$. Note however that $\eta (M,g_M)$ vanishes since the spectrum of $D_M$ is symmetric in this situation (see \cite[p. 61]{APSI75}). A similar remark applies for certain $\spinc$-structures with associated flat $U(1)$-bundle and flat $\alpha$.

\end{remark}

\subsection{Equivariant setting and the computation of the eta-invariant}\label{SS: donnelly}

In general it is very complicated to compute eta-invariants. However, it can be done in some specific situations. Here we discuss the case when we can lift the information to a covering space, and when we have symmetries to compute the invariants locally at the fixed points. Throughout this section $G$ will denote a compact (possibly non-connected or finite) Lie group.

Let $(X,g)$ be an $n$-dimensional $\spinc$-manifold on which $G$ acts by isometries. Suppose the induced action on the orthonormal frame bundle lifts to the $\spinc(n)$-structure $P_{\spinc(n)}\to X$ (in particular, the principal $\spinc(n)$-bundle is $G$-equivariant). Then the associated spinor bundle $S(X)$ is $G$-equivariant and $G$ acts on the space of sections $\Gamma(S(X))$. Suppose the associated $G$-equivariant principal $U(1)$-bundle $P_{U(1)}\to X$ is equipped with a $G$-equivariant unitary connection $\nabla ^\ccc $. Then the latter together with the Levi-Civita connection of $(M,g)$ determines a connection $\nabla$ on the spinor bundle $S(M)$ which is $G$-equivariant. In this situation the $\spinc$-Dirac operator on $(X,g_X)$ is $G$-equivariant.

\subsubsection*{Equivariant eta-invariants and covering spaces}

Let $(M,g)$ be a closed $\spinc$-manifold on which $G$ acts by isometries and suppose that the action lifts to the $\spinc$-structure. In this situation each eigenspace of the $\spinc$-Dirac operator $D_M$ is a finite dimensional $G$-representation. For $\groupelement\in G$ let $\groupelement_\lambda ^\sharp$ denote the action of $\groupelement$ on the eigenspace $E_\lambda$ for the eigenvalue $\lambda $ of $D_M$. 

For fixed $\groupelement\in G$ one can consider the equivariant eta-invariant at $\groupelement$. This invariant, denoted by $\eta(M,g)_\groupelement$, is defined as the value at $z=0$ of the meromorphic extension of the series $\sum_{\lambda \neq 0} \sign(\lambda)\cdot \text{trace} (\groupelement_\lambda ^\sharp)/\vert\lambda \vert^{z}$,
which converges absolutely for $\Re (z)\gg 0$ (see \cite[\S 2]{APS73}, \cite[(2.13) on p. 412]{APSII75}, \cite{D78}). Note that the equivariant eta-invariant at the identity element $1\in G$ is equal to the ordinary eta-invariant, i.e., $\eta(M,g)_1=\eta(M,g)$. Note also that the equivariant eta-invariant $\eta(M,g)_\groupelement$, $\groupelement\in G$, only depends on the equivariant spectrum, i.e., only depends on the representations $E_\lambda $.

Suppose $M$ is connected and has finite fundamental group $G:=\pi _1(M)$. Let $\hat M$ be the universal cover of $M$ and $\alpha :G\to U(m)$ a unitary representation. We denote the associated flat vector bundle $\hat M \times _G \C ^m\to M$ by $\alpha$ as well. Let $\eta _\alpha (M,g)$ be the eta-invariant for the $\spinc$-Dirac operator on $M$ twisted with $\alpha $.

The metric $g$ lifts to a metric $\hat g$ on $\hat M$. The $\spinc$-Dirac operator $D_{M}$ lifts to the $\spinc$-Dirac operator $D_{\hat M}$ on $(\hat M,\hat g)$ which is equivariant \wrt \ the action of $G$ by deck transformations. Let $\eta (\hat M,\hat g)_\groupelement$ denote the equivariant eta-invariant of $D_{\hat M}$ at $\groupelement\in G$.

In this situation one has the following formula expressing the (twisted) eta-invariant $\eta _\alpha (M,g)$ in terms of equivariant (untwisted) eta-invariants $ \eta (\hat M,\hat g)_\groupelement$ (see \cite[Thm. 3.4]{D78}, \cite[p. 390]{BGS97}):
\begin{equation}\label{EQ: Donnelly covering}
\eta _\alpha (M,g)= \frac 1 {\vert G \vert} \left (\sum _{\groupelement\in G} \eta (\hat M,\hat g)_\groupelement\cdot \chi _\alpha (\groupelement)\right ),
\end{equation}
where $\chi _\alpha:G\to\C$ denotes the character of the representation $\alpha$. Whereas the eta-invariant $\eta _\alpha (M,g)$ is usually hard to compute directly, the equivariant eta-invariants $ \eta (\hat M,\hat g)_\groupelement$ can be computed in some situations where $\hat M$ bounds a suitable manifold $W$ in terms of certain local data, as we shall explain in the following section.

\subsubsection*{Donnelly's fixed point formula}

For a closed manifold the link between the equivariant index and local data at the fixed point set is given by the Atiyah-Bott-Segal-Singer Lefschetz fixed point formula. Originally, this formula was proved using methods from $K$-theory. Later Atiyah, Patodi and Singer gave a proof based on the heat equation and extended the index theorem to manifolds with boundary. This method was utilized by Donnelly to prove a fixed point formula for manifolds with boundary. We will discuss this formula in the specific context of equivariant $\spinc$-Dirac operators (see \cite{D78} and references therein for the general discussion).

Let $(W,g_W)$ be a $2n$-dimensional compact $\spinc$-manifold with boundary on which $G$ acts by isometries. Suppose the action lifts to the $\spinc$-structure. We will assume that $g_W$ is of product form near the boundary. Let $g_{\partial W}$ denote the induced metric on the boundary. We will also assume that the principal $U(1)$-bundle on $W$ associated to the $\spinc$-structure is equipped with a $G$-equivariant connection which is constant near the boundary in the normal direction.

Let  $D_{W}^+$ denote the $\spinc$-Dirac operator on $(W,g_W)$ subject to the APS-boundary condition and let $D_{\partial W}$ denote the $\spinc$-Dirac operator on $(\partial W,g_{\partial W})$ \wrt \ the induced $\spinc$-structure and connection. Note that both operators are $G$-equivariant, as well as the formal adjoint $(D_W^+)^*$ of $D_{W}^+$ (defined via invariant bundle metrics). In this situation $\ker \, D_W^+ $ and $\ker \,  (D_W^+)^*$ are $G$-representations and the index of $D_{W}^+$ refines to the virtual complex $G$-representation
$$\ind_G \, D_W^+:= \ker \, D_W^+ - \ker \,  (D_W^+)^* \in R(G),$$
which we identify with its character $G\to \C $, also denoted by $\mathrm{ind}_G \, D_{W}^+$.

For $\groupelement\in G$ we denote by $\ind _G \, D_{W}^+(\groupelement)$ the equivariant index at $\groupelement$, by $h_{\groupelement}$ the character of the $G$-representation $\ker \, D_{\partial W}$ at $\groupelement$ and by $ \eta (\partial W,g_{\partial W})_\groupelement$ the equivariant eta-invariant of $D_{\partial W}$  at $\groupelement$. Then the fixed point formula asserts that
\begin{equation}\label{EQ: LFF}
\ind _G \, D_{W}^+(\groupelement) +\frac 1 2 \left (h_{\groupelement} + \eta (\partial W,g_{\partial W})_\groupelement\right )=\sum _{N\subset W^{\groupelement}}a_N,
\end{equation}
where $a_N$ is a local contribution of a connected component $N$ of the fixed point set $W^{\groupelement}$. Each $a_N$ is an integral over $N$. The integrand depends on the equivariant operator at $N$ and can be expressed in terms of characteristic forms and the action of $\groupelement$ (see \cite[Thm. 1.2]{D78} for details).

We will now assume in addition that $\groupelement\in G$ acts without fixed points on the boundary. In this situation each $N$ is closed. The local contribution $a_N$ is now independent of the metric (see \cite[\S 2]{APS73}) and is equal to the local contribution in the Lefschetz fixed point formula for {\em closed} manifolds. Hence, the contribution can be computed from the equivariant symbol of the $\spinc$-Dirac operator and can be described explicitly in terms of equivariant characteristic classes. Roughly speaking, one takes the cohomological expression in the non-equivariant index theorem, divides out by the Euler class of the normal bundle of $N$ and replaces the formal roots by the equivariant formal roots at $N$ (for a precise statement in the general case see \cite[\S 3]{ASIII68}, for a recipe how to compute the local data see \cite[\S 5.6]{HBJ92}, for a detailed discussion in the case of $\spinc$-structures see for example the appendix of \cite{De96}).

To simplify the discussion we will illustrate this in the special case of an involution $\tau \in G$ which acts with isolated fixed points on $W$ (and no fixed points on $\partial W$). This is the situation which will occur in the study of the moduli space of metrics on homotopy real projective spaces.

The cohomological expression in the non-equivariant index theorem for the $\spinc$-Dirac operator on a closed $2n$-dimensional manifold has the form $\left (\prod _{j=1}^n \frac {x_j}{e^{x_j/2}-e^{-x_j/2}}\right ) \cdot e^{c/2}$,
where $(\pm x_1,\ldots , \pm x_n)$ are the formal roots of the tangent bundle and $c$ denotes the first Chern class of the associated principal $U(1)$-bundle.  Let $pt$ be an isolated fixed point of $\tau$. By the recipe mentioned above the local contribution at $pt$ takes the form $ a_{pt}=\left (\prod _{j=1}^n \frac {1}{(\epsilon_{pt,j}i)-(\epsilon_{pt,j}i)^{-1}}\right ) \cdot \theta _{pt}$,
where $i=\sqrt {-1}$ and the choice of $\epsilon_{pt,j}\in \{\pm 1\}$ and $\theta _{pt}\in \{\pm 1 ,\pm i\}$ depends on the lift of the action to the $\spinc$-structure and on the point $pt$.

In our applications the $\spinc$-structure on $W$ will be induced from a complex structure (resp., almost complex structure) and $\tau$ is holomorphic (resp., is preserving the almost complex structure). In this situation the equivariant symbol of the $\spinc$-Dirac operator is equal to the equivariant symbol of the Dolbeault operator on $W$ (for background information see \cite[\S 3.6]{G95}, \cite[p. 400]{LM89}, \cite[Lemma 5.5]{Du96}, \cite[11.2.4]{N07}) and the local contributions are equal to the corresponding ones for the Dolbeault operator. In particular, the ambiguities in the formula above for the $\spinc$-Dirac operator do not occur. The following computation will be used in Proposition \ref{PROP: eta of special metrics with s nonzero}. 
\begin{proposition}\label{PROP: Dolbeault contribution} Suppose $W$ and $\tau$ satisfy the assumptions given above. Then the local contribution $a_{pt}$ at any isolated $\tau $-fixed point $pt$ is given by
$$a_{pt}=2^{-n}.$$
\end{proposition}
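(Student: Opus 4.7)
The plan is to reduce the computation to the Atiyah-Bott holomorphic Lefschetz formula for the Dolbeault complex. Since the $\spinc$-structure on $W$ is induced by the complex (resp.\ almost complex) structure and $\tau$ is holomorphic (resp.\ preserves this structure), the equivariant symbol of the $\spinc$-Dirac operator at $pt$ agrees with that of the Dolbeault operator, as already noted in the paragraph preceding the statement. Hence $a_{pt}$ equals the Atiyah-Bott local contribution for the Dolbeault complex at the isolated fixed point $pt$.

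I would next identify the eigenvalues of the differential $d\tau_{pt}$ acting on the holomorphic tangent space $T^{1,0}_{pt}W \cong \C^n$. Because $\tau$ is a holomorphic involution, $d\tau_{pt}$ is a $\C$-linear involution of $\C^n$ and hence diagonalizable with eigenvalues in $\{+1,-1\}$. The $(+1)$-eigenspace is the tangent space to the $\tau$-fixed set at $pt$, which is trivial by the assumption that $pt$ is an isolated fixed point. Therefore all $n$ eigenvalues of $d\tau_{pt}$ on $T^{1,0}_{pt}W$ are equal to $-1$.

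Finally, the Atiyah-Bott formula at an isolated holomorphic fixed point reads
\[
a_{pt} \;=\; \prod_{j=1}^n \frac{1}{1 - \lambda_j^{-1}},
\]
where $\lambda_1,\dots,\lambda_n$ are the eigenvalues of $d\tau_{pt}$ on the holomorphic tangent space. Substituting $\lambda_j = -1$ for every $j$, each factor equals $1/2$, and the product is $2^{-n}$, which is the asserted value. I do not expect any genuine obstacle here: the two nontrivial ingredients are the identification of equivariant symbols (already recorded in the paper) and the elementary linear-algebra observation that an isolated fixed point of a holomorphic involution forces all eigenvalues of $d\tau_{pt}$ to be $-1$; the final substitution is immediate.
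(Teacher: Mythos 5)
Your proposal is correct and follows essentially the same route as the paper: both reduce to the Dolbeault operator via the identification of equivariant symbols and then evaluate the holomorphic Lefschetz local contribution at an isolated fixed point where all eigenvalues of $d\tau_{pt}$ on $T^{1,0}_{pt}W$ equal $-1$ (the paper phrases this as the equivariant formal roots being $(\pi i,\ldots,\pi i)$ and writes the factor as $\frac{1}{1-e^{-\pi i}}=\frac12$). Your explicit linear-algebra justification that isolatedness of the fixed point forces all eigenvalues to be $-1$ is a welcome elaboration of a step the paper merely asserts, and the $\lambda_j$ versus $\lambda_j^{-1}$ convention is immaterial here since $\tau$ is an involution.
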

\begin{proof} It suffices to compute the local contribution of the equivariant Dolbeault operator at a fixed point. The cohomological expression in the index theorem for the Dolbeault operator on a closed manifold has the form $\prod _{j=1}^n \frac {x_j}{1-e^{-x_j}}$,
where $(x_1,\ldots ,x_n)$ are the formal complex roots of the tangent bundle.  The equivariant formal complex roots at an isolated fixed point $pt$ of $\tau$ are equal to $(\pi i,\ldots ,\pi i)$. By the recipe mentioned above the local contribution at $pt$ takes the form $a_{pt}=\prod _{j=1}^n \frac {1}{1-e^{-\pi i}}=2^{-n}$.
\end{proof}


\section{Review of Brieskorn spheres}\label{Section: Review of Brieskorn spheres}
In this section we review the construction and properties of certain Brieskorn spheres (see \cite{B66}, further information can be found in the books \cite{HM68,M68}). These manifolds have many nice geometric and topological features. On the one hand they admit large Lie group actions, and in particular free actions by involutions. On the other hand they come with bordisms to which the action extends. Moreover the homotopy real projective spaces which arise as quotients carry interesting metrics of $scal >0$. These properties will be used in the following sections to construct adequate extensions of these metrics to the bordisms and to compute their relative eta-invariants.

\subsection{Brieskorn varieties and Brieskorn spheres}\label{Subsection: Brieskorn varieties and Brieskorn spheres}
Let $\fdn{d}{n+1}:\C^{n+1}\to \C$ be the polynomial defined by 
$$z:=(z_1,\ldots,z_{n},z_{n+1})\mapsto z_1^2+\ldots +z_{n}^2+z_{n+1}^d.$$ For $\epsi\in\C$ consider the hypersurface $V_\epsi^{n}(d) :=\{z\in \C ^{n+1}\, : \, \fdn{d}{n+1}(z)=\epsi\}$.
Then $V_\epsi^{n}(d)$ is an algebraic manifold with no singular points if $\epsi \neq 0$ whereas the Brieskorn variety $V_0^{n}(d)$ has an isolated singular point in $z=0$ if $d\geq 2$.

Let $S^{2n+1}:=\{z\in \C ^{n+1}\, : \, |z_1|^2+\dots+|z_{n+1}|^2=1\}$ and $D^{2n+2}:=\{z\in \C ^{n+1}\, : \, |z_1|^2+\dots+|z_{n+1}|^2\leq 1\}$
denote the sphere and closed disk of radius one in $\R ^{2n+2}=\C ^{n+1}$, respectively. Note that for $\epsi\in\C$ sufficiently small  $V_\epsi^{n}(d)$ and $S^{2n+1}$ intersect transversally \cite[\S 14.3]{HM68}. Let
\begin{align*}
\Sigma _\epsi^{2n-1}(d) & :=V_\epsi^{n} (d)\cap S ^{2n+1} \text{ and}\\
W _\epsi^{2n}(d) & :=V_\epsi^{n} (d)\cap D^{2n+2}.
\end{align*}
Note that $z=0$ is a regular point of $\fdn{1}{n+1}$ and $\Sigma _0^{2n-1}(1)$ is diffeomorphic to the standard unknotted $(2n-1)$-dimensional sphere in $S ^{2n+1}$.
 
Throughout Section \ref{Section: Review of Brieskorn spheres} we will restrict to $d,n$ odd and $d,n\geq 3$.
The manifolds $\Sigma _0^{2n-1}(d)$, i.e., the cases where $\epsi = 0$, have been extensively studied; let us briefly recall the main results. Brieskorn made the crucial observation that $\Sigma _0^{2n-1}(d)$ is a homotopy sphere. By the Generalized Poincar\'e Conjecture, proved by Smale in \cite{S61}, $\Sigma _0^{2n-1}(d)$ is homeomorphic to the standard sphere $S^{2n-1}$. Moreover, $\Sigma _0^{2n-1}(d)$ is diffeomorphic to $S^{2n-1}$ if $d\equiv \pm 1 \mod 8$ and diffeomorphic to the Kervaire sphere if $d\equiv \pm 3 \mod 8$ (see \cite[p. 11]{B66}, \cite[p. 78]{HM68}). The Kervaire sphere is diffeomorphic to the standard sphere in dimensions $5,13,29,61$ and maybe $125$ and is known to be exotic in all other dimensions $2n-1$, $n$ odd (see \cite{HHR06}).

For our purposes we will need the interplay between the manifolds $\Sigma _0^{2n-1}(d)$ and $\Sigma _\epsi^{2n-1}(d)$ with $\epsi\neq 0$. As we shall see in the present section, although they are
 diffeomorphic to each other for small $\epsi$, they have significantly different properties as submanifolds depending on whether $\epsi =0$ or $\epsi\neq 0$.

An important difference is the following: Whereas $\Sigma _\epsi^{2n-1}(d)$ with $\epsi\neq 0$ is the boundary of the complex manifold
$W _\epsi^{2n}(d)$, the Brieskorn sphere $\Sigma _0^{2n-1}(d)$ bounds $W _0^{2n}(d)$ which is only a complex manifold away from its singular point $0\in \C ^{n+1}$. In any case we equip $\Sigma _\epsi^{2n-1}(d)$ and $\Sigma _0^{2n-1}(d)$ with the orientation induced from the complex structure of $W _\epsi^{2n}(d)$ and $W _0^{2n}(d)\setminus \{0\}$, respectively.

The restriction of $\fdn{d}{n+1}:\C ^{n+1}\to \C$ to $S ^{2n+1}$ is a submersion in an open neighborhood ${\cal N}\subset S^{2n+1}$ of $\Sigma _0^{2n-1}(d)$. Hence, the induced map $\mathtt{p}:{\cal N}\to \fdn{d}{n+1}({\cal N})\subset \C$ is a fiber bundle \cite[\S 14.3]{HM68}. In particular, the fiber $\Sigma _\epsi^{2n-1}(d)=\mathtt{p}^{-1}(\epsi)$ is diffeomorphic to $\Sigma _0^{2n-1}(d)=\mathtt{p}^{-1}(0)$ for $\epsi$ sufficiently small (an explicit orientation-preserving diffeomorphism can be constructed from the normalized gradient flow of $\fdn{d}{n+1}$).

Throughout Section \ref{Section: Review of Brieskorn spheres} we will assume that $\epsi$ is a nonnegative real number.

\subsection{Group actions on Brieskorn varieties and their quotients}\label{SS: group actions and quotients}

The constructions above come equipped with certain group actions. Consider the following actions of  $\so2\subset \C$ and $O(n)$ on $\C ^{n+1}$:
$$
\begin{array}{rrll}
A (z_1,\ldots ,z_{n+1})&=&((A(z_1,\ldots ,z_{n})^T)^T, z_{n+1}) &\text{for } A\in\ O(n)\\
w (z_1,\ldots ,z_{n+1})&=&(w^d z_1,\ldots ,w^dz_{n},w^2 z_{n+1}) &\text{for } w\in\ \so2 .
\end{array}
$$
Clearly both actions restrict to actions on $S^{2n+1}$ and $D^{2n+2}$. Note that:
\begin{itemize}
\item the polynomial $\fdn{d}{n+1}$ is $O(n)$-invariant, i.e., $\fdn{d}{n+1}(A z)= \fdn{d}{n+1}(z)$,
\item $\fdn{d}{n+1}( w z)=w^{2d}\fdn{d}{n+1}(z)$, $\forall \, w\in \C $, so that the polynomial $\fdn{d}{n+1}$ is $\Z_{2d}$-invariant, where $\Z_{2d}< \so2$ denotes the set of complex $2d$-roots of unity $\{w\in \C \, : \, w^{2d}=1\}$ and
\item the subspace $\left(\fdn{d}{n+1}\right)^{-1}(0)$ is $\so2$-invariant.
\end{itemize}
We get the following actions on the manifolds defined in Section \ref{Subsection: Brieskorn varieties and Brieskorn spheres}, depending on the value of $\epsi$.

For $\epsi\neq 0$, the manifolds $W_\epsi^{2n}(d)$ and $\Sigma_\epsi^{2n-1}(d)$ admit an action of $\Z _{2d} \times O(n)$ with ineffective kernel $\{\pm (1,Id)\}$. The action on $W_\epsi^{2n}(d)$ is by holomorphisms, i.e., preserves the complex structure of  $W_\epsi^{2n}(d)\subset \C ^{n+1}$. As a consequence, the action on $\Sigma_\epsi^{2n-1}(d)$ is orientation-preserving.

For $\epsi = 0$, the manifolds $W_0^{2n}(d)\setminus \{0\}$ and $\Sigma_0^{2n-1}(d)$  admit an action of $\so2\times O(n)$ with ineffective kernel $\{\pm (1,Id)\}$. Again the action is by holomorphisms on $W_0^{2n}(d)\setminus \{0\}$ and orientation-preserving on $\Sigma_0^{2n-1}(d)$.

The action of $\so2\times O(n)$ on $\Sigma_0^{2n-1}(d)$ is of cohomogeneity one with orbit space an interval. The principal isotropy is $\Z_2 \times O(n - 2)$ and the non-principal isotropies are $S^1\times O(n - 2)$ and $\Z_2 \times O(n - 1)$. In particular, the non-principal orbits (those projecting to the endpoints of the interval) have codimension $2$ and $n-1$, respectively (see for example \cite[p. 212]{BH87} for details).

The action of $O(n)$ on $\Sigma_0^{2n-1}(d)$ is of cohomogeneity two with orbit space a disk (see \cite{HM68}, \cite[p. 54]{B72}). The principal orbits are diffeomorphic to the Stiefel manifold $V_{2,n}(\R )=O(n)/O(n-2)$.

Let $\tau$ denote the action of $\pm (1,-Id)$ on $\C^{n+1}$, i.e.,
$$\tau (z_1,\dots,z_{n+1}) :=(-z_1,\dots,-z_{n},z_{n+1}).$$

For $\epsi < 1$  the involution $\tau$ acts freely on $\Sigma_\epsi^{2n-1}(d)$. The quotient $\Sigma_\epsi^{2n-1}(d)/\tau$ is homotopy equivalent to $\R P^{2n-1}$ (this can be seen by making the classifying map $\Sigma_\epsi^{2n-1}(d)/\tau \to B\Z _2=\R P ^\infty$ cellular and applying the Whitehead theorem). Note that $\Sigma_\epsi^{2n-1}(d)/\tau $ inherits an orientation and an (orientation-preserving) action by $\Z_{2d}\times O(n)$.
For later reference we point out the following result.

\begin{proposition}\label{PROP: quotients by tau are diffeomorphic for small e}
The manifolds $\Sigma_\epsi^{2n-1}(d)/\tau$ and $\Sigma_0^{2n-1}(d)/\tau$ are diffeomorphic as oriented manifolds for sufficiently small $\epsi$.
\end{proposition}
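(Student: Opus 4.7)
The plan is to show that the fiber-bundle trivialization already indicated in the excerpt can be chosen to be $\tau$-equivariant, so that it descends to a diffeomorphism of the quotients.

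\medskip

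First I would observe that the polynomial $\fdn{d}{n+1}$ is $\tau$-invariant: indeed
$$\fdn{d}{n+1}(\tau z)=(-z_1)^2+\ldots+(-z_n)^2+z_{n+1}^d=\fdn{d}{n+1}(z).$$
In particular each fiber $\Sigma_\epsi^{2n-1}(d)=\mathtt{p}^{-1}(\epsi)$ is $\tau$-invariant, and since $\tau$ is an isometry of the round sphere $S^{2n+1}$, the neighborhood $\mathcal{N}\subset S^{2n+1}$ of $\Sigma_0^{2n-1}(d)$ on which the submersion property holds can be chosen $\tau$-invariant (if not, replace $\mathcal{N}$ by $\mathcal{N}\cap\tau(\mathcal{N})$). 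Hence we have a $\tau$-equivariant submersion $\mathtt{p}\colon \mathcal{N}\to \fdn{d}{n+1}(\mathcal{N})\subset\C$, with $\tau$ acting trivially on the base.

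Next I would promote the existing trivialization over a small disk around $0$ to a $\tau$-equivariant one. The explicit orientation-preserving diffeomorphism mentioned in the excerpt is obtained from the (suitably normalized) gradient flow of $\fdn{d}{n+1}$ with respect to the round metric on $S^{2n+1}$. Since this metric is $O(n+1)$-invariant it is in particular $\tau$-invariant, and since $\fdn{d}{n+1}$ is $\tau$-invariant, the corresponding horizontal distribution (the orthogonal complement of the fibers of $\mathtt{p}$) is $\tau$-invariant, hence the horizontal lifts of paths in the base starting at $\tau$-related points are $\tau$-related. Equivalently, the normalized gradient vector field of $\mathrm{Re}(\fdn{d}{n+1})$ (and of $\mathrm{Im}(\fdn{d}{n+1})$) is a $\tau$-equivariant vector field tangent to $\mathcal{N}$. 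Integrating for time $\epsi$ (with $\epsi>0$ small) produces a $\tau$-equivariant, orientation-preserving diffeomorphism
$$\Phi\colon \Sigma_0^{2n-1}(d)\longrightarrow\Sigma_\epsi^{2n-1}(d).$$

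Finally I would pass to the quotient. For $\epsi<1$ the involution $\tau$ acts freely on $\Sigma_\epsi^{2n-1}(d)$ by the discussion preceding the proposition, and similarly on $\Sigma_0^{2n-1}(d)$. Since $\Phi$ intertwines these free actions, it descends to a well-defined diffeomorphism $\bar{\Phi}\colon \Sigma_0^{2n-1}(d)/\tau\to\Sigma_\epsi^{2n-1}(d)/\tau$. The orientations on the quotients are the ones induced by the $\tau$-invariant orientations of the total spaces, so $\bar{\Phi}$ is orientation-preserving because $\Phi$ is.

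The only mildly delicate point is ensuring that one genuinely gets a $\tau$-equivariant trivialization (as opposed to an arbitrary one), but this is immediate once one uses a $\tau$-invariant metric — the round metric on $S^{2n+1}$ — to define the Ehresmann connection. Everything else is bookkeeping: $\tau$-invariance of $\fdn{d}{n+1}$, freeness of the $\tau$-action for small $\epsi$, and the fact already recorded in the excerpt that the resulting diffeomorphism is orientation-preserving.
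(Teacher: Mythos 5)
Your proof is correct and is essentially the paper's argument: both rest on the $\tau$-invariance of $\fdn{d}{n+1}$, the submersion/fiber-bundle structure near the link, and the freeness of $\tau$ on the fibers for $\epsi<1$. The only (immaterial) difference is the order of operations — the paper first descends $\fdn{d}{n+1}$ to a map on a neighborhood in $S^{2n+1}/\tau$ and trivializes the quotient bundle there, whereas you build a $\tau$-equivariant trivialization upstairs via the round metric's horizontal distribution and then pass to quotients.
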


\begin{proof}  
Recall that the involution $\tau $ acts orientation-preservingly on $S^{2n+1}\subset\C^{n+1}$, $\Sigma_0^{2n-1}(d)$ and $\Sigma_\epsi^{2n-1}(d)$. Since the polynomial $\fdn{d}{n+1}$ is $\tau$-invariant it induces a map $\widetilde {\fdn{d}{n+1}}$ from the quotient $S ^{2n+1}/\tau$ to $\C$. Since the restriction of $\fdn{d}{n+1}$ to $S ^{2n+1}$ is a submersion in an open neighborhood of $\Sigma_\epsi^{2n-1}(d)\subset S ^{2n-1}$  for $\epsi$ sufficiently small, the same holds for $\widetilde {\fdn{d}{n+1}}$. Hence, the restriction of $\widetilde {\fdn{d}{n+1}}$ to a suitable small neighborhood $\widetilde{\mathcal{N}}$ of $\Sigma_0^{2n-1}(d)/\tau $ in $S ^{2n+1}/\tau$ is a fiber bundle and $\Sigma_\epsi^{2n-1}(d)/\tau$ is orientation-preservingly diffeomorphic to $\Sigma_0^{2n-1}(d)/\tau$.
\end{proof}

Finally we look at the action of $\tau$ on the bordism $W_\epsi^{2n}(d)$. We get the following properties, which can be checked directly.

\begin{lemma}\label{LEMMA: involution on W} For $\epsi < 1$ and $\epsi\neq 0$ the following hold:
\begin{enumerate}
\item The involution $\tau $ acts on $W_\epsi^{2n}(d)$ with exactly $d$ isolated fixed points $\{p_i\}_{i=1}^d$, all in the interior of $W_\epsi^{2n}(d)$. The fixed points are $p_i=(0,\dots,0,\lambda_i)$, $1\leq i\leq d$,
where $\{\lambda_i\}_{i=1}^d$ denotes the set of complex $d$-roots of the real number $\epsi$.
\item The action of $\Z _{2d}=\Z_{2d}\times \{1\}<\Z_{2d}\times O(n)$ on $W_\epsi^{2n} (d)$ permutes the isolated fixed points.\qed
\end{enumerate}
\end{lemma}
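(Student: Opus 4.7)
The plan is to proceed directly from the definitions, since both assertions reduce to elementary algebraic computations on $\C^{n+1}$; the lemma is asserted to be checkable ``directly'' and I expect no genuine obstacle.

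For part (1), I would first determine the fixed locus of $\tau$ on the ambient space. A point $z=(z_1,\ldots,z_{n+1})\in\C^{n+1}$ satisfies $\tau(z)=z$ precisely when $z_i=-z_i$ for $1\leq i\leq n$, equivalently $z_1=\cdots=z_n=0$, so the global $\tau$-fixed set in $\C^{n+1}$ is the complex line $\{(0,\ldots,0,z_{n+1})\}$. Intersecting this line with $W_\epsi^{2n}(d)=V_\epsi^n(d)\cap D^{2n+2}$ imposes the two conditions $\fdn{d}{n+1}(0,\ldots,0,z_{n+1})=z_{n+1}^d=\epsi$ and $|z_{n+1}|\leq 1$. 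By the convention adopted just before Section \ref{SS: group actions and quotients} we may take $\epsi$ real and, by hypothesis, $0<\epsi<1$, so the equation $z_{n+1}^d=\epsi$ has exactly $d$ distinct complex solutions $\{\lambda_i\}_{i=1}^d$, all of modulus $|\lambda_i|=\epsi^{1/d}<1$. This gives the $d$ isolated fixed points $p_i=(0,\ldots,0,\lambda_i)$, and the strict inequality $|\lambda_i|<1$ places each $p_i$ in the interior of $D^{2n+2}$, hence in the interior of $W_\epsi^{2n}(d)$.

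For part (2), I would just check that the $\Z_{2d}$-action preserves the fixed-point set. Recall that $w\in\Z_{2d}<\so2$ acts on $\C^{n+1}$ by $w\cdot(z_1,\ldots,z_n,z_{n+1})=(w^dz_1,\ldots,w^dz_n,w^2z_{n+1})$. Applied to $p_i=(0,\ldots,0,\lambda_i)$ this gives $w\cdot p_i=(0,\ldots,0,w^2\lambda_i)$. Since $w^{2d}=1$, we have $(w^2\lambda_i)^d=w^{2d}\lambda_i^d=\epsi$, so $w^2\lambda_i$ is again a complex $d$-th root of $\epsi$, i.e.\ $w\cdot p_i=p_j$ for some $j\in\{1,\ldots,d\}$. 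Thus the $\Z_{2d}$-factor permutes $\{p_1,\ldots,p_d\}$.

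The only thing to be careful about is to invoke the standing convention from the paragraph preceding Section \ref{SS: group actions and quotients} that $\epsi$ is a nonnegative real number, which together with $\epsi\neq 0$ and $\epsi<1$ ensures both that there are exactly $d$ distinct $d$-th roots of $\epsi$ and that all of them have modulus strictly less than $1$.
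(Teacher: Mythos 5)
Your proof is correct and is precisely the direct verification that the paper leaves to the reader (the lemma is stated with the remark that the properties ``can be checked directly'' and no proof is given). Both the computation of the $\tau$-fixed locus $\{z_1=\cdots=z_n=0,\ z_{n+1}^d=\epsi,\ |z_{n+1}|\leq 1\}$ and the observation that $w$ sends $\lambda_i$ to the $d$-th root $w^2\lambda_i$ of $\epsi$ are exactly what is intended.
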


\subsection{Computation of eta-invariants}\label{Subsection: Computation of eta-invariants}
In the proofs of Theorem A and B we will use relative eta-invariants for twisted $\spinc$-Dirac operators to distinguish path components in certain moduli spaces of metrics on
$\Sigma_\epsi^{2n-1}(d)/\tau $. In this section we specify the relevant setting and the computation of the invariant. We begin by describing the $\spinc$-structure.

As remarked in Section \ref{SS: APS eta} the complex structure of $W_\epsi^{2n}(d)$, $\epsi \neq 0$, together with a Hermitian metric on the tangent bundle determines a $\spinc$-structure on $(W_\epsi^{2n}(d),g_W)$ for any metric $g_W$. In the following discussion we will assume that both metrics are $G$-invariant, where $G$ is a closed subgroup of $\Z_{2d}\times O(n)$ containing $(1,-Id)$ or $(-1,Id)$. Then the $\spinc$-structure on $W_\epsi^{2n}(d)$ is $G$-equivariant as well. The induced equivariant $\spinc$-structure on the boundary  $\Sigma_\epsi^{2n-1}(d)=\partial W_\epsi^{2n}(d)$ descends to a $\spinc$-structure on the quotient $\Sigma_\epsi^{2n-1}(d)/\tau$. The latter will be called the {\em preferred $\spinc$-structure} \wrt \ $g_W$ and defines the {\em preferred topological $\spinc$-structure} of $\Sigma _\epsi^{2n-1}(d)/\tau$. This structure will be used in the definition of the relative eta-invariant.

Next we fix a flat connection on the principal $U(1)$-bundle associated to the preferred $\spinc$-structure  of $\Sigma _\epsi^{2n-1}(d)/\tau$ and choose the non-trivial complex line bundle $\alpha :\Sigma _\epsi^{2n-1}(d)\times _{\langle \tau \rangle} \C \to \Sigma _\epsi^{2n-1}(d)/\tau$ as the flat twisting bundle, where $\tau$ acts by $-Id$ on $\C $. Observe that $\alpha$ coincides with the line bundle associated to the principal $U(1)$-bundle, since $n$ is odd.

These data define the relative eta-invariant $ \widetilde{\eta}_\alpha \left(\Sigma_\epsi^{2n-1}(d) /\tau, \tilde{g}\right)$, where $\tilde{g}$ denotes the metric induced by $g_W$.
The next proposition extracts the properties from the situation above which are relevant for the computation of $ \widetilde{\eta}_\alpha \left(\Sigma_\epsi^{2n-1}(d) /\tau, \tilde{g}\right)$ starting from the quotient and determines the invariant under the stated geometric assumptions.

\begin{proposition}\label{PROP: eta of special metrics with s nonzero}
Suppose $0 < \epsi < 1$. Let $\tilde{g}$ be a metric of $scal > 0$ on $\Sigma_\epsi^{2n-1}(d)/\tau$. Then
$$ \widetilde{\eta}_\alpha \left(\Sigma_\epsi^{2n-1}(d) /\tau, \tilde{g}\right)= -2^{-(n-1)}d,$$ provided that the lift $g$ of $\tilde{g}$ to $\Sigma_\epsi^{2n-1}(d)$ extends to a metric $g_W$ on $W_\epsi^{2n}(d)$ which is $\tau$-invariant, of $scal > 0$ and of product form near the boundary.
\end{proposition}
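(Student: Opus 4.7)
My plan is to compute $\widetilde{\eta}_\alpha(M,\tilde g)$ for $M:=\Sigma_\epsi^{2n-1}(d)/\tau$ in two steps: first, reduce it via formula \eqref{EQ: Donnelly covering} to a single equivariant eta-invariant on the universal cover $\hat M:=\Sigma_\epsi^{2n-1}(d)$, and then evaluate that equivariant eta-invariant by applying Donnelly's fixed point formula \eqref{EQ: LFF} to the bordism $W:=W_\epsi^{2n}(d)$. Writing $G:=\pi_1(M)=\langle\tau\rangle\cong\Z_2$ and $\hat g$ for the lift of $\tilde g$, the character of the non-trivial representation $\alpha$ satisfies $\chi_\alpha(\tau)=-1$, so applying \eqref{EQ: Donnelly covering} both to $\alpha$ and to the trivial representation yields
$$
\widetilde{\eta}_\alpha(M,\tilde g)=\eta_\alpha(M,\tilde g)-\eta(M,\tilde g)=-\eta(\hat M,\hat g)_\tau ,
$$
so it suffices to prove $\eta(\hat M,\hat g)_\tau=2^{-(n-1)}d$.

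To apply \eqref{EQ: LFF} to $(W,g_W)$, I equip $W$ with the $\spinc$-structure induced by its complex structure together with a $\tau$-invariant Hermitian metric on $TW$; by construction this restricts on $\partial W=\hat M$ to the lift of the preferred $\spinc$-structure used in the definition of $\widetilde{\eta}_\alpha$. By Lemma \ref{LEMMA: involution on W} the involution $\tau$ acts on $W$ with exactly $d$ isolated fixed points $p_1,\dots,p_d$, all lying in the interior of $W$, so \eqref{EQ: LFF} specialises to
$$
\ind_G D_W^+(\tau)+\tfrac{1}{2}\bigl(h_\tau+\eta(\partial W,g_{\partial W})_\tau\bigr)=\sum_{i=1}^d a_{p_i} .
$$

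The crux of the argument is to make the two leftmost terms vanish via the Vanishing Theorem \ref{REMARK: Dirac injective}. The key topological input is that $TW$ is stably trivial: the hypersurface $V_\epsi^n(d)$ is a regular level set of $\fdn{d}{n+1}$ whose normal bundle in $\C^{n+1}$ is trivialised by $\nabla\fdn{d}{n+1}$, so $c_1(W)=0$. Since $W$ has the homotopy type of a bouquet of $(n-1)$-spheres with $n-1\geq 2$, it is simply connected; hence the anti-canonical line bundle (the $U(1)$-bundle in the $\spinc$-structure) is trivial and carries a $\tau$-equivariant flat connection that is constant in the normal direction near the boundary. Together with the hypothesis $scal(g_W)>0$, part (2) of the Vanishing Theorem forces $\ind_G D_W^+(\tau)=0$; applying part (1) to the simply connected cover $(\hat M,\hat g)$, whose scalar curvature is positive because $g$ is a local isometric lift of $\tilde g$, forces $h=0$ and hence $h_\tau=0$. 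Since $\tau$ acts holomorphically on $W$, each $a_{p_i}$ is given by Proposition \ref{PROP: Dolbeault contribution} as $a_{p_i}=2^{-n}$. Substituting back into the fixed point formula yields $\tfrac{1}{2}\eta(\hat M,\hat g)_\tau=d\cdot 2^{-n}$, whence $\widetilde{\eta}_\alpha(M,\tilde g)=-2^{-(n-1)}d$, as claimed.

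The step I expect to require the most care is the book-keeping of $\spinc$-structures and flat $U(1)$-connections: one must verify that the trivial flat connection on the anti-canonical bundle of $W$, once restricted to $\hat M$ and pushed down to $M$ via $\tau$, is gauge-equivalent to the flat connection fixed in Section \ref{Subsection: Computation of eta-invariants} in the definition of $\widetilde{\eta}_\alpha$, and that the lift of $\tau$ to $P_{U(1)}\to\hat M$ selects the non-trivial $U(1)$-bundle on the quotient (which, as noted in Section \ref{S: spinc structure}, is the one forced by the existence of a $\spinc$-structure on $M$ in odd dimension). Everything else is a matter of assembling machinery that is already in place.
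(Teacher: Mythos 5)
Your argument is correct and follows essentially the same route as the paper's proof: reduce to $-\eta(\hat M,\hat g)_\tau$ via the covering formula \eqref{EQ: Donnelly covering}, kill the index and kernel terms in Donnelly's formula \eqref{EQ: LFF} using the Vanishing Theorem together with $scal>0$ on $W$ and on $\hat M$, and sum the $d$ local contributions $2^{-n}$ from Proposition \ref{PROP: Dolbeault contribution}. The only addition is your explicit justification that the associated $U(1)$-bundle over $W$ is trivial and carries a flat connection, which is welcome and essentially right --- just note that the Milnor fibre $W_\epsi^{2n}(d)$ is a bouquet of $n$-spheres, not $(n-1)$-spheres, which for $n\geq 3$ still gives $H^1(W)=H^2(W)=0$ and hence the conclusion you need.
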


The existence of such a metric will be shown in Corollary \ref{COR: deformation psc 2}. Note that the value of the relative eta-invariant does not depend on the particular choice of the Hermitian metric or the flat connection above.
\begin{proof} In order to lighten notation we will not write the dimensions of the spaces and we will not write the dependence on the metrics $\tilde{g}$ or $g$ for the corresponding eta-invariants. Also we put $\Sigma:=\Sigma_\epsi(d)$ and $W:=W_\epsi(d)$; for example $\widetilde{\eta}_\alpha \left(\Sigma_\epsi^{2n-1}(d) /\tau, \tilde{g}\right)$ will be denoted by $\widetilde{\eta}_\alpha(\Sigma/\tau)$.

The $\spinc$-Dirac operator on $\Sigma/\tau$ associated to the metric $\tilde{g}$ lifts to the $\spinc$-Dirac operator $D_\Sigma$ on $\Sigma$ associated to the metric $g$ (the lift of $\tilde{g}$).
We now use formula \eqref{EQ: Donnelly covering} to compute $ \widetilde{\eta}_\alpha \left(\Sigma/\tau\right)$ in terms of $\Z_2$-equivariant eta-invariants of $D_\Sigma$, where  $\Z_2=\{1,\tau \}$. More precisely we get:
\begin{align*}
\eta _\alpha (\Sigma/\tau) & = \frac 1 2 \left(\eta (\Sigma)_1\cdot \chi_\alpha (1) + \eta (\Sigma)_\tau \cdot \chi_\alpha (\tau)\right)=\frac 1 2 \left(\eta (\Sigma)-\eta (\Sigma)_\tau\right) \text{ and}\\
\eta (\Sigma/\tau) & =\eta _e (\Sigma/\tau) = \frac 1 2 (\eta  (\Sigma)_1\cdot 1 + \eta  (\Sigma)_\tau \cdot 1)=\frac 1 2 (\eta (\Sigma) +\eta  (\Sigma)_\tau ),
\end{align*}
where $e:\pi _1(\Sigma/\tau)\to \Z _2\subset U(1)$ denotes the trivial representation. This gives for the relative eta-invariant 
\begin{equation}\label{EQ: reduced eta 1}
\widetilde{\eta}_\alpha (\Sigma/\tau)=\eta _\alpha (\Sigma/\tau)-1\cdot\eta (\Sigma/\tau)=-\eta (\Sigma)_\tau .
\end{equation}

In order to compute $\eta (\Sigma)_\tau$ we use the Fixed Point Formula \eqref{EQ: LFF} discussed in Section \ref{SS: donnelly}. Since the metrics $g$ and $g_W$ on $\Sigma $ and $W$, respectively, are of $scal > 0$, the $\spinc$-Dirac operator on $\Sigma$ as well as the $\spinc$-Dirac operator on $W$ and its formal adjoint are all injective by Theorem \ref{REMARK: Dirac injective}. Since both metrics are $\tau$-invariant, the corresponding operators are $\tau$-equivariant and their kernels are $\tau$-invariant spaces (and  zero-dimensional by the above). In particular, $\ind _G\,  D_W^+(\tau)$ and $h_{\tau}$ (with the notation as in Section \ref{SS: donnelly}), which are defined in terms of these trivial spaces, both vanish. Recall from Lemma \ref{LEMMA: involution on W} that the set $W^\tau$ consists of exactly $d$ isolated points $\{p_i\}_{i=1}^d$. Thus formula \eqref{EQ: LFF} gives us  $\eta (\Sigma)_\tau =  2 \sum_{i=1}^d  a_{p_i}( \tau )$.

In order to compute the local contributions of the fixed points observe that $W$ and $\tau$ satisfy the assumptions in Proposition \ref{PROP: Dolbeault contribution}. It follows that the local contribution $a_{p_i}(\tau)$ at each fixed point is equal to $2^{-n}$. Hence, by \eqref{EQ: reduced eta 1} we get $ \widetilde{\eta}_\alpha \left(\Sigma/\tau\right)= -2^{-(n-1)}d$.
\end{proof}


\section{Cheeger deformations}\label{Section: Cheeger def}

In this section we recall a method to deform certain metrics that will be used in the proofs of Theorem A and B, and we apply it to construct suitable metrics on the Brieskorn varieties $W_\epsi^{2n}(d)$ defined in Section \ref{Section: Review of Brieskorn spheres}. 

The metrics of interest here come equipped with an isometric Lie group action and the idea is to shrink the metric in direction of the orbits. An early result in this direction is a theorem of Lawson and Yau \cite{LY74} which asserts that a connected manifold with non-trivial $SU(2)$-action admits an invariant metric of $scal>0$. For our purposes it will be more convenient to use a different technique which was used by Cheeger \cite{C72} to construct metrics of $sec \geq 0$ on the connected sum of rank one symmetric spaces. Nowadays this method is known as \emph{Cheeger deformation}. A systematic study of this construction was carried out by M\"uter in his Ph.D. thesis \cite{M87}, see also the concise summary by Ziller \cite{Z09} and \cite[Section 6.1]{AB15} for details. 

\subsection{Review of Cheeger deformations}\label{SS: review of Cheeger def}

Let $M$ be a manifold which at the moment may be non-compact and may have non-empty boundary. Suppose $M$ is equipped with a metric $h$ and suppose a compact Lie group $G$ acts by isometries on $M$. Let $Q$ be a biinvariant metric on $G$. Recall that for a plane $\langle X,Y\rangle $ spanned by orthonormal vectors $X,Y\in {\mathfrak g}=T_eG$ the sectional curvature \wrt \ $Q$ is given by $sec _Q(\langle X,Y\rangle )=\frac 1 4 \vert\vert [X,Y]\vert \vert_Q ^2 \geq 0$.

Consider the product metric $h + \frac{1}{t} Q$ on $M\times G$ for $t>0$ a real number. The action of $G$ on $M\times G$, given by $\mathrm{g_1}\cdot(p,\mathrm{g_2})=(\mathrm{g_1}\cdot p,\mathrm{g_1}\cdot\mathrm{g_2})$, is free and by isometries with respect to $h + \frac{1}{t} Q$. The quotient space $(M\times G)/G$ is diffeomorphic to $M$ and inherits a metric $h_t$ for which the projection
\begin{equation}\label{EQ: riem submersions t}
\pi_t: (M\times G, h + \frac{_1}{^t} Q)  \to (M,h_t),  \quad (p,\groupelement)  \mapsto \groupelement^{-1}p,
\end{equation}
is a Riemannian submersion. The $G$-action on $M$ is by isometries with respect to the submersion metric $h_t$ for all $t>0$. Also, the family of metrics $h_t$ varies smoothly with $t$, and extends smoothly to $t=0$ with $h_0=h$ the original metric (see \cite{M87}).

The importance of the Cheeger deformation $h_t$ comes from the fact that in some cases it ``increases the curvature'' of the original metric. Next we recall the properties relevant for the proof of Theorem A and B.

For $p\in M$, let $G_p$ be the isotropy group of the $G$-action at $p$ and $\mathfrak{g}_p$ its Lie algebra. The $Q$-orthogonal complement $\mathfrak{m}_p$ of $\mathfrak{g}_p$ can be identified via Killing fields  (nowadays also called action fields) with the tangent space $T_p(Gp)$ of the $G$-orbit of $p$, $\mathfrak{m}_p \overset \cong \longrightarrow T_p(Gp)$, $X \mapsto X^*$.
We denote $T_p(Gp)$ by $\mathcal{V}_p$ and define $\mathcal{H}_p$ to be the orthogonal complement of $\mathcal{V}_p$ with respect to $h$. 
This gives a splitting 
\begin{equation}\label{EQ: splitting}
T_p M= \mathcal{V}_p \oplus \mathcal{H}_p=\{ X^* : X\in\mathfrak{m}_p\}\oplus\{v\in T_p M : h(v,X^*)=0,\  \forall X\in\mathfrak{m}_p\}
\end{equation}
which is orthogonal with respect to $h_t$ for every $t\geq 0$.
Next consider the linear map $T_p M \to \mathfrak{m}_p$, $ v \mapsto v_\mathfrak{m}$,
which maps $v$ to the element $v_\mathfrak{m}\in \mathfrak{m}_p$ for which $\left( v_\mathfrak{m}\right)^*$ equals the projection of $v$ onto $\mathcal{V}_p$. Observe that for $X\in \mathfrak{m}_p$ we have $(X^*)_\mathfrak{m} = X$.

Following \cite{M87} consider the automorphisms:
\begin{align*}
P_p: &\, \mathfrak{m}_p \rightarrow \mathfrak{m}_p , & \qquad \text{determined by }& & Q(P_p(X),Y)&=h(X^*,Y^*) & & \forall\, Y\in  \mathfrak{m}_p\\
C_t: &\, T_pM\rightarrow T_pM ,& \qquad \text{determined by }& & h( C_t(X), Y )&= h_t (X,Y) & & \forall\,  Y \in T_pM
\end{align*}
The advantage of these maps is that they can be used to describe, for a given tangent plane $E_0\subset T_pM$, a family of tangent planes $E_t\subset T_pM$ for which the change of sectional curvature $t\mapsto sec _{h_t}(E_t)$ has a nice form and is suitable for computations. In particular, given $v\in T_p M$, the horizontal lift of $C_t^{-1}(v)\in T_p M$ to the point $(p,e)\in M\times G$ with respect to the Riemannian submersion $\pi_t$ (see \eqref{EQ: riem submersions t}) equals $(v,-tP(v_{\mathfrak{m}}))\in T_pM\times T_e G$.

In the remaining part of this section we apply this method to give some lower bounds on sectional and scalar curvatures.

\begin{proposition}\label{PROP: commuting case, bounds on scal} 
Let $v,w\in T_pM$ be linearly independent vectors. Then
\begin{equation}\label{EQ: bound for sec}
sec_{{h}_t}(\langle C_t^{-1}(v),C_t^{-1}(w)\rangle ) \geq \alpha (t) \cdot sec_{h}(\langle v,w\rangle)
\end{equation}
for all $t\geq 0$, where $0<\alpha (t)\leq 1$. If moreover $[P_p v_\mathfrak{m},P_p w_\mathfrak{m}]\neq 0$, then 
$$
\lim_{t\to\infty}sec_{{h}_t}(\langle C_t^{-1}(v),C_t^{-1}(w)\rangle )=\infty .
$$ 
\end{proposition}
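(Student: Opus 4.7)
The plan is to realize $(M,h_t)$ as the base of the Riemannian submersion $\pi_t:(M\times G, h+\tfrac{1}{t}Q)\to (M,h_t)$ from \eqref{EQ: riem submersions t} and invoke O'Neill's formula. Since the horizontal lift of $C_t^{-1}(v)$ at $(p,e)$ is $\tilde v := (v,-tP_p(v_\mathfrak{m}))$, and similarly $\tilde w$ for $w$, O'Neill gives
\[
sec_{h_t}(\langle C_t^{-1}v, C_t^{-1}w\rangle)\cdot |\tilde v\wedge\tilde w|^2_{h+Q/t} = sec_{h+Q/t}(\langle\tilde v,\tilde w\rangle)\cdot |\tilde v\wedge\tilde w|^2_{h+Q/t} + 3|A_{\tilde v}\tilde w|^2_{h+Q/t},
\]
with the $A$-tensor term nonnegative. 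Because the source carries a product metric, its curvature tensor decomposes as the orthogonal sum of the curvature tensors on the two factors, and the biinvariant metric $Q$ satisfies $sec_Q(X,Y)\cdot|X\wedge Y|_Q^2=\tfrac 14|[X,Y]|_Q^2$. Hence the first summand on the right equals
\[
sec_h(\langle v,w\rangle)\cdot |v\wedge w|_h^2 + \tfrac{t^3}{4}|[P_p v_\mathfrak{m},P_p w_\mathfrak{m}]|_Q^2,
\]
and dropping both nonnegative correction terms yields \eqref{EQ: bound for sec} with
\[
\alpha(t):=\frac{|v\wedge w|_h^2}{|C_t^{-1}v\wedge C_t^{-1}w|_{h_t}^2}.
\]

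Next I would verify $0<\alpha(t)\le 1$ by using the Riemannian submersion isometry $|\tilde v\wedge\tilde w|^2_{h+Q/t}=|C_t^{-1}v\wedge C_t^{-1}w|^2_{h_t}$ and expanding the left-hand side as a polynomial of degree two in $t$. Its constant term is $|v\wedge w|_h^2$, its quadratic coefficient is $|P_p v_\mathfrak{m}\wedge P_p w_\mathfrak{m}|_Q^2\ge 0$, and the coefficient of $t$ is
\[
h(v,v)|P_pw_\mathfrak{m}|_Q^2+h(w,w)|P_pv_\mathfrak{m}|_Q^2-2h(v,w)Q(P_pv_\mathfrak{m},P_pw_\mathfrak{m}),
\]
which is nonnegative by Cauchy--Schwarz in each factor followed by AM--GM: $2|h(v,w)\cdot Q(P_pv_\mathfrak{m},P_pw_\mathfrak{m})|\le h(v,v)|P_pw_\mathfrak{m}|_Q^2+h(w,w)|P_pv_\mathfrak{m}|_Q^2$. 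Hence the denominator is at least $|v\wedge w|_h^2>0$, which settles both bounds on $\alpha(t)$.

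For the final assertion, the same expansion shows that the denominator is at most quadratic in $t$, while if $[P_p v_\mathfrak{m},P_p w_\mathfrak{m}]\ne 0$ then the numerator in the curvature formula contains the strictly positive cubic term $\tfrac{t^3}{4}|[P_p v_\mathfrak{m},P_p w_\mathfrak{m}]|_Q^2$. Dividing through shows that $sec_{h_t}(\langle C_t^{-1}v,C_t^{-1}w\rangle)$ grows at least linearly in $t$, so the limit is $+\infty$. The main obstacle I anticipate is establishing the nonnegativity of the coefficient of $t$ above, which is what delivers the upper bound $\alpha(t)\le 1$; the remaining steps are routine bookkeeping with O'Neill's formula and the product-metric curvature decomposition.
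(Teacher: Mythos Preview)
Your argument is correct and follows the same route as the paper: lift the plane horizontally in the product $(M\times G,h+\tfrac1t Q)$ and apply the Gray--O'Neill formula, using the product decomposition of the curvature and the biinvariant formula $R_Q(X,Y,Y,X)=\tfrac14|[X,Y]|_Q^2$. The only substantive difference is organizational: the paper writes the product curvature in the normalized form $\alpha(t)\,sec_h\langle v,w\rangle + t\beta(t)\,sec_Q\langle P_pX,P_pY\rangle$ with $0<\beta(t)\le 1$ and $\beta(t)\to 1$, and simply asserts $0<\alpha(t)\le 1$ without spelling out the computation; you instead work with the unnormalized curvature $R_h(v,w,w,v)+\tfrac{t^3}{4}|[P_pX,P_pY]|_Q^2$ and supply the missing verification of $\alpha(t)\le 1$ via the Cauchy--Schwarz/AM--GM argument on the linear coefficient of the Gram polynomial. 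Your degree-counting (cubic numerator over at most quadratic denominator) for the divergence is equivalent to the paper's $t\beta(t)\to\infty$ argument.
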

 
\noindent
\begin{proof} Let $X:=v_\mathfrak{m}$ and $Y:=w_\mathfrak{m}$. Recall that the horizontal lift at $(p,e)$ of the plane $\langle C_t^{-1}(v),C_t^{-1}(w)\rangle $ in $\pi_t: (M\times G, h + \frac{_1}{^t} Q)  \longrightarrow  (M,h_t)$ is equal to $\langle (v, -t P_pX), (w, -t P_pY)\rangle $.

The sectional curvature of the horizontal lift is of the form
$$sec_{h + \frac{_1}{^t} Q}\langle (v, -t P_pX), (w, -t P_pY)\rangle=\alpha (t) \cdot sec_{h}\langle v,w\rangle + \beta (t)\cdot sec_{\frac 1 t Q}\langle -tP_pX, -tP_pY\rangle $$
\begin{equation}\label{EQ: sectional curvature}
=\alpha (t) \cdot sec_{h}\langle v,w\rangle 
+ t \beta (t)\cdot sec_{Q}\langle P_pX, P_pY\rangle,
\end{equation}
where
$$\alpha (t):= \frac {\vert \vert v\wedge w\vert \vert _{h}^2}{\vert \vert (v, -t P_pX)\wedge (w, -t P_pY)\vert \vert _{h + \frac{_1}{^t} Q}^2}\quad \text{and} \quad \beta (t):=\frac {\vert \vert (-tP_pX)\wedge (-tP_pY)\vert \vert _{\frac 1 t Q}^2}{\vert \vert (v, -t P_pX)\wedge (w, -t P_pY)\vert \vert _{{h + \frac{_1}{^t} Q}}^2}$$
satisfy $0<\alpha (t)\leq 1$, $0<\beta (t)\leq 1$ and $\lim _{t\to \infty} \beta (t)=1$.

The first statement clearly follows from the Gray-O'Neill formula \cite{ON66,G67}. If $P_pX$ and $P_pY$ do not commute then $sec_{Q}\langle P_pX, P_pY\rangle >0$, thus the second term in \eqref{EQ: sectional curvature} goes to infinity for $t\to \infty$ and dominates the first one, which stays bounded. Now the second statement follows from the Gray-O'Neill formula.
\end{proof}

Now we apply these estimates to obtain global lower bounds for the curvature of certain manifolds.

\begin{proposition}\label{PROP: Cheeger sec and scal}
Let $(M,h)$ be a (possibly non-compact) Riemannian manifold. If $sec_{h}\geq 0$ then $sec_{h_t}\geq 0$ for every $t\geq 0$. If in addition one of the following holds
\begin{itemize}
\item $scal_{h}>0$, or 
\item at every point $p\in M$ there exist $X,Y\in \mathfrak m_p$ such that $P_pX, P_pY$ do not commute, 
\end{itemize}
then $scal_{h_t}>0$ for every $t > 0$.
\end{proposition}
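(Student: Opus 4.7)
The plan is a plane-by-plane application of Proposition \ref{PROP: commuting case, bounds on scal}, using the slightly sharper estimate
\[
sec_{h_t}(\langle C_t^{-1}(v), C_t^{-1}(w)\rangle) \;\geq\; \alpha(t)\, sec_h(\langle v, w\rangle) + t\,\beta(t)\, sec_Q(\langle P_p v_{\mathfrak m}, P_p w_{\mathfrak m}\rangle)
\]
that is already implicit in its proof: it comes from applying the Gray--O'Neill inequality to the Riemannian submersion $\pi_t$ together with formula (5.2). Both summands on the right are nonnegative, with $\alpha(t),\beta(t)>0$, because $Q$ is biinvariant.

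For the first assertion, I fix $p\in M$, $t\geq 0$ and an arbitrary $2$-plane $E\subset T_pM$. Since $C_t$ is a linear automorphism of $T_pM$, I may write $E=\langle C_t^{-1}(v), C_t^{-1}(w)\rangle$ by taking $v:=C_t X$, $w:=C_t Y$ for any basis $X,Y$ of $E$. The displayed inequality then yields $sec_{h_t}(E)\geq \alpha(t)\, sec_h(\langle v,w\rangle)\geq 0$ from $sec_h\geq 0$, and since $E$ and $p$ were arbitrary this gives $sec_{h_t}\geq 0$ on all of $M$.

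For the second assertion, the first part already ensures $scal_{h_t}\geq 0$. To upgrade this to $scal_{h_t}(p)>0$ at each $p\in M$ and each $t>0$, it suffices to exhibit a single $2$-plane $E_0\subset T_pM$ with $sec_{h_t}(E_0)>0$: completing an $h_t$-orthonormal basis of $E_0$ to an $h_t$-orthonormal basis $\{e_1,\ldots,e_n\}$ of $T_pM$, the identity $scal_{h_t}(p)=\sum_{i\neq j}sec_{h_t}(\langle e_i,e_j\rangle)$ together with $sec_{h_t}\geq 0$ forces $scal_{h_t}(p)\geq 2\,sec_{h_t}(E_0)>0$. I produce such an $E_0$ in each case of the hypothesis. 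If $scal_h(p)>0$, then any $h$-orthonormal basis $\{u_i\}$ at $p$ satisfies $\sum_{i\neq j}sec_h(\langle u_i,u_j\rangle)=scal_h(p)>0$ with nonnegative summands, so $sec_h(\langle u_{i_0},u_{j_0}\rangle)>0$ for some pair, and the displayed inequality applied to $v=C_t u_{i_0}$, $w=C_t u_{j_0}$ propagates this to $sec_{h_t}(\langle u_{i_0},u_{j_0}\rangle)>0$. If instead $X,Y\in\mathfrak m_p$ satisfy $[P_pX,P_pY]\neq 0$, I apply the displayed inequality to $v=X^*$, $w=Y^*$ (so $v_{\mathfrak m}=X$, $w_{\mathfrak m}=Y$): the $sec_Q$-term $t\beta(t)\, sec_Q(\langle P_pX, P_pY\rangle)$ is strictly positive for $t>0$, giving $E_0=\langle C_t^{-1}(X^*), C_t^{-1}(Y^*)\rangle$ with $sec_{h_t}(E_0)>0$. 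The only step that needs a moment of care, and which carries the whole argument for the second assertion, is the reduction from strict positivity of the scalar curvature at $p$ to the existence of one strictly positive sectional curvature at $p$; it rests essentially on the nonnegativity established in the first part.
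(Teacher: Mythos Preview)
Your approach is essentially the paper's: use the Gray--O'Neill inequality for $\pi_t$ (equivalently, the estimate displayed in the proof of Proposition~\ref{PROP: commuting case, bounds on scal}) to preserve $sec\geq 0$, and then exhibit one plane of strictly positive $h_t$-curvature at each point in either of the two supplementary hypotheses.

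There is, however, a small indexing slip in your treatment of the case $scal_h>0$. You choose $v=C_t u_{i_0}$, $w=C_t u_{j_0}$ so that $\langle C_t^{-1}(v),C_t^{-1}(w)\rangle=\langle u_{i_0},u_{j_0}\rangle$, but then the right-hand side of your displayed inequality is $\alpha(t)\,sec_h(\langle v,w\rangle)=\alpha(t)\,sec_h(\langle C_t u_{i_0},C_t u_{j_0}\rangle)$, and you have no control over this quantity beyond $\geq 0$; what you know is that $sec_h(\langle u_{i_0},u_{j_0}\rangle)>0$. The fix is to apply the inequality with $v=u_{i_0}$, $w=u_{j_0}$ instead, obtaining $sec_{h_t}\bigl(\langle C_t^{-1}(u_{i_0}),C_t^{-1}(u_{j_0})\rangle\bigr)\geq \alpha(t)\,sec_h(\langle u_{i_0},u_{j_0}\rangle)>0$, and take $E_0=\langle C_t^{-1}(u_{i_0}),C_t^{-1}(u_{j_0})\rangle$. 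This is exactly what the paper does (via inequality~\eqref{EQ: bound for sec}). With this correction your argument is complete and coincides with the paper's proof.
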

\begin{proof}
The fact that $sec_{h_t}\geq 0$ follows from the Gray-O'Neill formula \cite{ON66,G67} for the Riemannian submersion $\pi_t: (M\times G, h + \frac{_1}{^t} Q)  \longrightarrow  (M,h_t)$, since the product metric $h + \frac{_1}{^t} Q$ is of $sec\geq 0$. 

The assumption $scal_{h}>0$ implies that there is a plane of positive sectional curvature for $h$ at every point of $M$. The same is guaranteed for $h_t$ for every $t>0$ thanks to \eqref{EQ: bound for sec}.

The assumption of the existence of $X,Y\in \mathfrak m_p$ with $[P_pX, P_pY]\neq 0$ together with \eqref{EQ: sectional curvature} implies that one has a plane of positive sectional curvature for $h_t$ for every $t>0$ at every point in $M$.
\end{proof}

We will also need the following result.
\begin{proposition}\label{PROP: scal for large t}
Let $K$ be a compact subset of $(M,h)$. Assume that at every point $p\in K$ there exist $X,Y\in \mathfrak m_p$ such that $P_pX, P_pY$ do not commute. Then there exists $t_0\geq 0$ such that the scalar curvature of $(M,h_t)$ is positive on $K$ for every $t > t_0$.
\end{proposition}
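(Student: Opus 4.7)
The strategy is to combine the curvature blow-up furnished by Proposition \ref{PROP: commuting case, bounds on scal} with uniform lower bounds on the remaining sectional curvatures, so that the scalar curvature at each $p\in K$, being the sum of pairwise sectional curvatures over an $h_t$-orthonormal basis of $T_pM$, becomes positive on $K$ for $t$ sufficiently large. Concretely, at each $p\in K$ the plane $E_t(p) := \langle C_t^{-1}X(p)^*, C_t^{-1}Y(p)^*\rangle$ (for chosen $X(p), Y(p)\in \mathfrak m_p$ with $[P_pX(p), P_pY(p)]\neq 0$) will have $sec_{h_t}(E_t(p))\to\infty$ uniformly on $K$, while the sectional curvatures of all other planes at points of $K$ will be shown to be uniformly bounded below.

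First I would select $X(p),Y(p)\in\mathfrak m_p$ with $[P_pX(p),P_pY(p)]\neq 0$ in a locally continuous manner, and by a finite cover of $K$ together with compactness extract uniform positive lower bounds $c_0\leq sec_Q\langle P_pX(p),P_pY(p)\rangle$ and $c_1\leq\|P_pX(p)\wedge P_pY(p)\|_Q$ across all $p\in K$. Next, combining equation \eqref{EQ: sectional curvature} with the Gray--O'Neill inequality for the Riemannian submersion $\pi_t$ yields, for every plane $E=\langle u_1,u_2\rangle\subset T_pM$ with $v_i:=C_tu_i$,
$$sec_{h_t}(E) \;\geq\; \alpha(t)\,sec_h\langle v_1,v_2\rangle \;+\; t\,\beta(t)\,sec_Q\langle P_p(v_1)_{\mathfrak m}, P_p(v_2)_{\mathfrak m}\rangle \;\geq\; -C_0,$$
where $C_0$ is a uniform upper bound for $|sec_h|$ at points of $K$, using $\alpha(t)\in(0,1]$ and the nonnegativity of $sec_Q$. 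Applied to the good plane $E_t(p)$ with $(v_1,v_2)=(X(p)^*,Y(p)^*)$ the same inequality yields $sec_{h_t}(E_t(p))\geq -C_0 + t\beta(t)c_0$, which tends to $+\infty$ uniformly on $K$ since $\beta(t)\to 1$ uniformly (a consequence of $c_1>0$ and the explicit form of $\beta$ displayed in the proof of Proposition \ref{PROP: commuting case, bounds on scal}).

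Finally, completing $\{C_t^{-1}X(p)^*, C_t^{-1}Y(p)^*\}$ (Gram--Schmidt-orthonormalized with respect to $h_t$, which preserves the spanned plane) to an $h_t$-orthonormal basis $\{e_1,\ldots,e_n\}$ of $T_pM$ gives
$$scal_{h_t}(p) \;=\; 2\sum_{i<j}sec_{h_t}\langle e_i, e_j\rangle \;\geq\; 2\,sec_{h_t}(E_t(p)) \;-\; 2C_0\Bigl(\tbinom{n}{2}-1\Bigr) \;\geq\; 2c_0\,t\beta(t)\;-\;C',$$
with $C'$ depending only on $K$; the right-hand side becomes positive for $t>t_0$ with $t_0$ depending only on $K$, proving the proposition. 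The main obstacle is the continuous-choice problem in the first step: the isotropy type of the $G$-action can jump across $K$ so that the subspaces $\mathfrak m_p$ do not form a smooth vector bundle. This is handled by working locally within isotropy strata where the assignments $p\mapsto\mathfrak m_p$ and $p\mapsto P_p$ are continuous, exploiting openness of the condition $[P_pX, P_pY]\neq 0$, and a standard finite covering.
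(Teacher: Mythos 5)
Your proof is correct and follows essentially the same route as the paper's: a uniform lower bound for all sectional curvatures of $h_t$ on $K$ coming from compactness and \eqref{EQ: bound for sec}, one plane per point whose curvature blows up by Proposition \ref{PROP: commuting case, bounds on scal}, and a locally uniform version of that blow-up obtained by compactness before summing over an $h_t$-orthonormal basis (the paper phrases this last step as ``scalar curvature is the average of sectional curvatures''). The only remark worth making is that the selection issue you flag is resolved not by stratifying by isotropy type (strata are not open, so they do not yield the needed open cover) but by fixing $X,Y\in\mathfrak g$ near a given $p$ and observing that $q\mapsto [P_q(X^*_q)_{\mathfrak m},P_q(Y^*_q)_{\mathfrak m}]$ is continuous across orbit types via the extension of $P_q$ to a semi-definite endomorphism of $\mathfrak g$, so the non-commuting condition is open --- which is the ``openness'' you correctly invoke.
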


\begin{proof} Since $K$ is compact we can find a constant $c\in \R$ such that for every point $p\in K$ and every tangent plane in $T_pM$ the sectional curvature of $(M,h)$ is $\geq c$. By \eqref{EQ: bound for sec} the same is true if one replaces $c$ by $\min (0, c)$ and $h$ by $h_t$, $t>0$. By Proposition \ref{PROP: commuting case, bounds on scal} the assumption on non-commuting elements gives the following: for every point $p\in K$ and every constant $c^\prime \in \R $ there exists an open neighborhood $U$ of $p$ and $t_p>0$ such that for every point $q\in U$ and every $t>t_p$ there is a tangent plane of $(M,h_t)$ at $q$ with sectional curvature $>c^\prime$. Since the scalar curvature is the average of sectional curvatures and $K$ is compact the statement follows.
\end{proof}


\subsection{Deformations on Brieskorn varieties}

In the proofs of Theorems A and B we will need to extend certain metrics on Brieskorn spheres to invariant metrics of $scal > 0$ on some of the bordisms considered in Section \ref{Section: Review of Brieskorn spheres}. This can be done via Cheeger deformations, using the group actions explained in Section \ref{SS: group actions and quotients}. In order to apply the results from Section \ref{SS: review of Cheeger def} we look at the corresponding isotropy groups. 
 
As before let $d,n$ be odd and $d,n\geq 3$. The isotropy groups of the action by $\so2\times O(n)$ on $\Sigma_0^{2n-1}(d)$ are of the form $\Z_2\times O(n-2)$, $\so2\times O(n-2)$ and $\Z_2\times O(n-1)$ \cite[p. 212]{BH87}. It is straightforward to see that one can always find non-commuting vectors in $\mathfrak{m}_p$ for all $p\in  \Sigma_0^{2n-1}(d)$. 

If we restrict to dimension $5$, i.e., $n=3$, and consider invariant metrics of $sec\geq 0$, which exist by the work of Grove and Ziller (see Section \ref{Section: proof Theorem A}), then Proposition \ref{PROP: Cheeger sec and scal} gives us the following

\begin{corollary}\label{COR: deformation psc cohomo 1}
Let $h$ be a metric of $sec\geq 0$ on $\Sigma_0^5(d)$ which is invariant under the action of a closed subgroup $G$ of $\so2\times O(3)$ which contains $O(3)$. Then for every $t>0$ the Cheeger deformation $h_t$ \wrt \ the $G$-action is of $sec\geq 0$ and $scal>0$. The same is true for $\Sigma _0^5(d)/\tau$.\qed
\end{corollary}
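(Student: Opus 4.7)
The plan is to apply Proposition~\ref{PROP: Cheeger sec and scal} to the isometric $G$-action on $(\Sigma_0^5(d),h)$. The assertion $sec_{h_t}\geq 0$ for every $t\geq 0$ is a direct consequence of its first conclusion, using the hypothesis $sec_h\geq 0$. The substantive task is then to verify the hypothesis of its second bullet — that at every $p\in\Sigma_0^5(d)$ one can find $X,Y\in\mathfrak{m}_p$ with $[P_pX,P_pY]\neq 0$ — so as to conclude $scal_{h_t}>0$ for all $t>0$.

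First I would identify the relevant Lie-theoretic structure. Write $\mathfrak{g}:=\mathrm{Lie}(G)$; since $O(3)\subseteq G\subseteq\so2\times O(3)$, we have $\mathfrak{so}(3)\subseteq\mathfrak{g}$, and $G_p$ is contained in one of the $\so2\times O(3)$-isotropies recalled in Section~\ref{SS: group actions and quotients} for $n=3$, namely $\Z_2\times O(1)$, $\so2\times O(1)$, or $\Z_2\times O(2)$. All three have Lie algebra of dimension at most $1$, so $\dim\mathfrak{g}_p\leq 1$. Since $\mathfrak{m}_p\cap\mathfrak{so}(3)$ equals the $Q$-orthogonal complement inside $\mathfrak{so}(3)$ of the $Q$-orthogonal projection $\pi_{\mathfrak{so}(3)}(\mathfrak{g}_p)$, this intersection has dimension at least $3-1=2$.

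Next I would invoke the classical fact that every two-dimensional subspace of $\mathfrak{so}(3)$ — identified with $\R^3$ equipped with the cross-product bracket — is non-abelian, to pick linearly independent $X',Y'\in\mathfrak{m}_p\cap\mathfrak{so}(3)$ with $[X',Y']\neq 0$. Because $P_p$ is a linear automorphism of $\mathfrak{m}_p$, the preimages $X:=P_p^{-1}X'$ and $Y:=P_p^{-1}Y'$ lie in $\mathfrak{m}_p$ and satisfy $[P_pX,P_pY]=[X',Y']\neq 0$, which is the input required by Proposition~\ref{PROP: Cheeger sec and scal}. The assertion for the quotient $\Sigma_0^5(d)/\tau$ follows immediately: $\tau=(1,-Id)\in O(3)\subseteq G$, so both $h$ and $h_t$ are $\tau$-invariant and descend to the quotient, and since $\tau$ acts freely by isometries the projection is a Riemannian covering, transferring $sec\geq 0$ and $scal>0$ to the quotient metric.

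The main — and essentially only — obstacle is carrying out the dimension count for $\mathfrak{m}_p\cap\mathfrak{so}(3)$ uniformly across the three isotropy types; notably, no assumption about a possible $\so2$-factor in $G$ enters the argument, because the $O(3)$-factor alone supplies the non-commutativity needed to drive the scalar curvature part of the conclusion.
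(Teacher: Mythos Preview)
Your proof is correct and follows essentially the same route as the paper: the paper simply lists the isotropy groups, asserts that finding non-commuting vectors in $\mathfrak m_p$ is ``straightforward,'' and invokes Proposition~\ref{PROP: Cheeger sec and scal}, whereas you supply the details via the dimension count $\dim(\mathfrak m_p\cap\mathfrak{so}(3))\geq 2$ and the non-abelian nature of planes in $\mathfrak{so}(3)$. One small point you use implicitly is that any bi-invariant metric $Q$ on $G=H\times O(3)$ (with $H\subseteq\so2$) makes the $\R$-summand of $\mathfrak g$ orthogonal to $\mathfrak{so}(3)$; this is automatic by $\mathrm{Ad}$-invariance and irreducibility of $\mathfrak{so}(3)$, so your identification of $\mathfrak m_p\cap\mathfrak{so}(3)$ with the orthogonal complement of $\pi_{\mathfrak{so}(3)}(\mathfrak g_p)$ is indeed valid.
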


Next we consider the $O(n)$-action on the bordism $W_\epsi^{2n}(d)$, $\epsi<1$, and its boundary $\Sigma_\epsi^{2n-1}(d)$. As before let $\tau $ denote the action of $-Id\in O(n)$. The following result guarantees the existence of a metric as in Proposition \ref{PROP: eta of special metrics with s nonzero}.

\begin{corollary}\label{COR: deformation psc 2}
Let $h$ be an $O(n)$-invariant metric on $W_\epsi^{2n}(d)$ such that
\begin{itemize}
\item every fixed point $p_i$ of $\tau$ has an $O(n)$-invariant open neighborhood $U_i$ where $h$ is of positive sectional curvature, and
\item $h$ is of product form near the boundary $\partial W_\epsi^{2n}(d) = \Sigma_\epsi^{2n-1}(d)$.
\end{itemize} 
Then there exists $t_0\geq 0$ such that the Cheeger deformation $h_t$ is of $scal>0$ on $W_\epsi^{2n}(d)$ and of product form near the boundary for every $t\geq t_0$.
\end{corollary}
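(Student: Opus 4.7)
The plan is to split $W_\epsi^{2n}(d)$ into the $O(n)$-invariant open region $\bigcup_i U_i$ around the $\tau$-fixed points, where by assumption $h$ already has $sec>0$ so the pointwise estimate of Proposition~\ref{PROP: commuting case, bounds on scal} passes positivity of scalar curvature directly to $h_t$, and a compact complement $K$ on which the non-commuting hypothesis of Proposition~\ref{PROP: scal for large t} holds; applying that proposition produces the required $t_0$. The product form near the boundary then survives the deformation essentially automatically because the $O(n)$-action is trivial in the normal direction. The main technical input is verifying the non-commuting hypothesis, which reduces to a short bracket computation in $\mathfrak{o}(n)$.

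Concretely, first I would choose $O(n)$-invariant open neighborhoods $V_i$ of $p_i$ with $\overline{V_i}\subset U_i$, and set $K := W_\epsi^{2n}(d) \setminus \bigcup_i V_i$, which is compact. For any $p \in K$ we have $(z_1,\ldots,z_n) \neq 0$, so the isotropy $O(n)_p$ fixes the real span of $\Re(z_1,\ldots,z_n)$ and $\Im(z_1,\ldots,z_n)$ and is thus conjugate in $O(n)$ to $O(n-k)$ with $k \in \{1,2\}$. In coordinates adapted to the splitting $\R^n = \R^{n-k} \oplus \R^k$, the $Q$-orthogonal complement $\mathfrak{m}_p \subset \mathfrak{o}(n)$ contains the antisymmetric matrices $E_{1n}-E_{n1}$ and $E_{2n}-E_{n2}$; their bracket $E_{21}-E_{12}$ is nonzero because $n \geq 3$. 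Since $P_p$ is an automorphism of $\mathfrak{m}_p$, we may choose $X,Y\in\mathfrak{m}_p$ with $[P_pX,P_pY] \neq 0$, so Proposition~\ref{PROP: scal for large t} produces $t_0\geq 0$ with $scal_{h_t}>0$ on $K$ for every $t>t_0$. On $\bigcup_i U_i$, the hypothesis $sec_h>0$ provides at every point a $2$-plane $\langle v,w\rangle$ with $sec_h(\langle v,w\rangle)>0$, and inequality \eqref{EQ: bound for sec} of Proposition~\ref{PROP: commuting case, bounds on scal} yields a $2$-plane of positive $h_t$-sectional curvature at that point for every $t\geq 0$, hence $scal_{h_t}>0$ there. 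Combining both regions gives $scal_{h_t}>0$ on all of $W_\epsi^{2n}(d)$ for $t>t_0$.

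For the product structure, I would write $h = h_\partial + dr^2$ on an $O(n)$-invariant collar of $\Sigma_\epsi^{2n-1}(d)=\partial W_\epsi^{2n}(d)$, where $\partial_r$ is the inward unit normal. Because $O(n)$ preserves the boundary, the metric, and the inward direction, it fixes $\partial_r$ pointwise, so in the collar coordinates $(x,r)$ the action takes the form $g\cdot(x,r)=(g\cdot x,r)$. Then $\mathfrak{m}_p$ and the corresponding vertical space $\mathcal{V}_p$ lie entirely in the $\Sigma$-factor of $T_{(x,r)}W_\epsi^{2n}(d)$, so the Cheeger deformation respects the product splitting and $h_t = (h_\partial)_t + dr^2$ on the collar for every $t$. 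Together with the previous paragraph this proves the corollary.

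The main obstacle is the isotropy analysis at non-fixed points of the $O(n)$-action needed to apply Proposition~\ref{PROP: scal for large t}; once the bracket computation in $\mathfrak{o}(n)$ above is in place, everything else is a routine combination of the results in Section~\ref{Section: Cheeger def}.
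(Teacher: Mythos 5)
Your proof is correct and takes essentially the same route as the paper's: split $W_\epsi^{2n}(d)$ into invariant neighborhoods of the $\tau$-fixed points, where positivity of the curvature of $h$ passes to $h_t$ via \eqref{EQ: bound for sec} (note that since every $2$-plane at $p$ is of the form $\langle C_t^{-1}v,C_t^{-1}w\rangle$, that inequality in fact gives $sec_{h_t}>0$ on all of $U_i$, which is what justifies the jump from ``one positively curved plane'' to $scal_{h_t}>0$), and a compact complement where the non-commuting isotropy condition feeds into Proposition \ref{PROP: scal for large t} to produce $t_0$. Your explicit isotropy analysis with the bracket computation in $\mathfrak{o}(n)$ and the collar argument for the product form merely spell out details the paper leaves implicit.
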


\begin{proof}
On the open neighborhoods $U_i$ one has that $h_t$ is of $scal>0$ for every $t\geq 0$ by Proposition \ref{PROP: Cheeger sec and scal}.

Let us look at the complement $K:=W_\epsi^{2n}(d)\backslash (\cup U_i)$. We study the isotropy groups of the $O(n)$-action as we did for $\Sigma_0^{2n-1}(d)$ above. There are points $(z_1,\dots,z_{n+1})\in W_\epsi^{2n}(d)$ with only one non-zero coordinate which do not occur in $\Sigma_0^{2n-1}(d)$. We have two kinds:
\begin{itemize}
\item The fixed points $\{p_i\}$ of the $\tau$-action, which do not belong to $K$.

\item Points of the form $(0,\dots, 0, z_i , 0 ,\dots, 0)$, with $i\leq n$ and $z_i^2=\epsi$. The isotropy group at such points is conjugate to $O(n-1)$.
\end{itemize}
The rest of the isotropy groups can be deduced from those of $\Sigma_0^{2n-1}(d)$. As above, one can always find non-commuting vectors in $\mathfrak{m}_p$ for every $p\in K$. Note that $K$ is compact by construction, thus we can apply Proposition \ref{PROP: scal for large t}, which gives the desired result.
\end{proof}

Note that for any given invariant metric on the boundary $\Sigma_\epsi^{2n-1}(d)$ an extension $h$ as in the corollary above always exists: Choose any metric which extends the given metric on the boundary, is of product form near the boundary and invariant of positive sectional curvature near each fixed point (the latter condition can be fulfilled since a neighborhood of a fixed point $p_i$ can be identified equivariantly with a hemisphere of the round sphere $S^{2n}=S(V\oplus \R )$, where $V$ denotes the $O(n)$-representation at $p_i$). After averaging this metric over the $O(n)$-action one obtains a metric $h$ as in Corollary \ref{COR: deformation psc 2}.


\section{Proof of Theorem A}\label{Section: proof Theorem A}

We begin by recalling the following crucial result
 on homotopy $\R P^5$s: there are four oriented diffeomorphism types, all of which are attained by $\Sigma_0^5(d)/\tau$ with the values $d= 1,3,5,7$,
 respectively. Moreover, $\Sigma_0^5(d)/\tau$ and $\Sigma_0^5(d^\prime)/\tau$ are diffeomorphic as oriented manifolds if $d\equiv d^\prime \bmod 16$ (\cite[V.4, V.6.1]{L71}, \cite[p. 512-513]{G69}, \cite{AB68,A77}). The manifolds $\Sigma_0^5(d)/\tau$ are the quotients of Brieskorn spheres discussed in Section \ref{Section: Review of Brieskorn spheres}, where the reader can find all the properties that we will use throughout the present section. Here we denote
$$
\BKRP{d}:=\Sigma_0^5(d)/\tau.
$$
We fix any odd $d$ and consider $\BKRP{d}$, thus we cover all manifolds homotopy equivalent to $\R P^5$. In particular, all the manifolds in the sequence $\{\BKRP{d+16l}\}_{l\in\N}$ are diffeomorphic to each other. Next we proceed to construct the metrics.

Recall that $\Sigma_0^5(d)$ comes with a cohomogeneity one action by $\so2\times O(3)$, which descends to an action on $\BKRP {d}$. The latter is again of cohomogeneity one, with non-principal orbits of codimension $2$. By the work of Grove and Ziller $\BKRP {d}$ admits an $\so2\times O(3)$-invariant metric of $sec\geq 0$, which we will denote by $\tilde{g}^d$ (see \cite[Thm. G]{GZ00}). An inspection of their construction shows that one can assume in addition that $\tilde{g}^d$ is of $scal>0$. Another way to obtain simultaneously $sec\geq 0$ and $scal>0$ is by applying the general deformation argument given in Corollary \ref{COR: deformation psc cohomo 1}.

The main step in the proof is the following computation of the relative eta-invariant of the Grove-Ziller metric $\tilde{g}^d$.

\begin{claim}\label{CLAIM}
Equip $(\BKRP {d},\tilde{g}^d)$ with the preferred $\spinc$-structure (see Section \ref{Subsection: Computation of eta-invariants}) and a flat connection
 on the associated principal $U(1)$-bundle. Let $\alpha$ be the (unique) non-trivial flat complex line bundle over $M_d$. Then the relative eta-invariant of the corresponding $\spinc$-Dirac operator twisted with $\alpha $ is given by
$$\widetilde{\eta}_\alpha \left(\BKRP{d} ,\tilde{g}^d\right)=-\frac{d}{4}.$$
\end{claim}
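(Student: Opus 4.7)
The plan is to identify $\BKRP{d}$ with $\BKRPeps{d}{\epsi}$ for small $\epsi>0$, replace $\tilde g^d$ by a metric lying in the same path component of $\mathcal{R}_{scal>0}$ which arises as the boundary restriction of an $O(3)$-invariant metric of $scal>0$ on the smooth bordism $W_\epsi^6(d)$, and then apply Proposition \ref{PROP: eta of special metrics with s nonzero} with $n=3$.

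Fix $\epsi>0$ sufficiently small. An inspection of the proof of Proposition \ref{PROP: quotients by tau are diffeomorphic for small e} shows that the resulting diffeomorphism $\Phi:\BKRP{d}\to\BKRPeps{d}{\epsi}$ can be chosen $O(3)$-equivariant, because the normalized gradient flow of $\widetilde{f_{d,4}}$ can be taken with respect to an $O(3)$-invariant metric on $S^7/\tau$ and $f_{d,4}$ is $O(3)$-invariant. Via $\Phi$ we also match the preferred topological $\spinc$-structures on both sides. Set $\hat g:=(\Phi^{-1})^*\tilde g^d$, an $O(3)$-invariant metric of $sec\geq 0$ and $scal>0$ on $\BKRPeps{d}{\epsi}$, and let $g$ denote its lift to $\Sigma_\epsi^5(d)$.

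By the remark following Corollary \ref{COR: deformation psc 2}, $g$ extends to an $O(3)$-invariant metric $h$ on $W_\epsi^6(d)$ which is of product form near the boundary and has positive sectional curvature on $O(3)$-invariant neighborhoods of the $d$ isolated $\tau$-fixed points $p_i$ listed in Lemma \ref{LEMMA: involution on W}. Applying Cheeger deformation along the $O(3)$-action, Corollary \ref{COR: deformation psc 2} provides $t_0>0$ such that $h_{t_0}$ has $scal>0$ everywhere on $W_\epsi^6(d)$, is still $\tau$-invariant, and remains of product form near the boundary. The boundary restriction $g_{t_0}:=h_{t_0}|_{\Sigma_\epsi^5(d)}$ is precisely the Cheeger deformation of $g$ at time $t_0$, and Proposition \ref{PROP: Cheeger sec and scal} guarantees that $\{g_t\}_{t\in[0,t_0]}$ is a continuous path in $\mathcal{R}_{scal>0}(\Sigma_\epsi^5(d))$. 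Descending to the quotient and pulling back via $\Phi$ yields a continuous path in $\mathcal{R}_{scal>0}(\BKRP{d})$ between $\tilde g^d$ and the metric $\tilde g$ induced by $h_{t_0}$.

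By Proposition \ref{PROP: eta invariant scal} the relative eta-invariants of $\tilde g^d$ and $\tilde g$ coincide. Since $h_{t_0}$ satisfies all hypotheses of Proposition \ref{PROP: eta of special metrics with s nonzero}, the latter gives
\[
\widetilde\eta_\alpha\bigl(\BKRP{d},\tilde g^d\bigr)\;=\;-2^{-(3-1)}d\;=\;-\frac{d}{4}.
\]
The only subtle point is the $O(3)$-equivariance of $\Phi$ together with its compatibility with the preferred topological $\spinc$-structures; once these are settled, the rest assembles the results already established in Sections \ref{Section: Spaces, operators and eta-invariant}--\ref{Section: Cheeger def}.
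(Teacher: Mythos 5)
Your argument is correct and relies on the same machinery as the paper (Cheeger deformations, Corollary \ref{COR: deformation psc 2}, Propositions \ref{PROP: eta invariant scal} and \ref{PROP: eta of special metrics with s nonzero}), but it reorganizes one structural step. The paper never asserts that the identification $\BKRP{d}\cong\BKRPeps{d}{\epsi}$ can be made $O(3)$-equivariant; instead it extends $\tilde g^d$ to an $O(3)$-invariant metric $\tilde h$ on the whole cylinder $\tilde Z(d)$ swept out by the slices $\BKRPeps{d}{\epsi}$, $0\le\epsi\le\epsi_0$, Cheeger-deforms that extension, and then travels from the $\epsi=0$ slice to the $\epsi=\epsi_0$ slice along a second path of $scal>0$ metrics, which is where Proposition \ref{PROP: scal for large t} is invoked on the compact cylinder. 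You collapse these two legs into one by transporting the Grove--Ziller metric directly through an equivariant diffeomorphism $\Phi$. Your equivariance claim is true and is the crux of your shortcut: since $\fdn{d}{4}$ is $O(3)$-invariant and the round metric on $S^7$ is $O(3)$-invariant, the horizontal distribution of the submersion $\mathtt p$ is $O(3)$-invariant, so the horizontal lift of the segment $[0,\epsi]\subset\C$ is an invariant vector field whose flow is $O(3)$-equivariant (in particular $\tau$-equivariant, hence it descends to the quotients and is orientation-preserving). What your route buys is that the cylinder, the averaging of the extended metric $\tilde h$, and Proposition \ref{PROP: scal for large t} on $\tilde Z(d)$ are no longer needed; what it costs is that the equivariance of $\Phi$ --- exactly the point the paper's cylinder construction is designed to circumvent --- must be proved rather than asserted, so you should spell out the invariant-horizontal-lift argument. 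The remaining point, compatibility of $\Phi$ with the preferred topological $\spinc$-structures, is handled by you at the same level of detail as the paper handles it for its family $\varphi_\epsi$, and the rest of your assembly (restriction of the Cheeger deformation to the boundary equals the Cheeger deformation of the restriction, positivity of $scal$ along $\{g_t\}$ via Proposition \ref{PROP: Cheeger sec and scal}) is sound.
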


We postpone the proof of Claim \ref{CLAIM} to the end of the present section and we continue here with the proof of Theorem A.

By the discussion above there exists an orientation preserving diffeomorphism $F_l :\BKRP {d}\to \BKRP {d+16l}$ for every $l\in\N$. We denote by $h_{l}$ the \pullback \ of the Grove-Ziller metric $\tilde{g}^{d+16l}$ on $\BKRP {d+16l}$ to $\BKRP {d}$, i.e.,
$$
h_{l}:=F_l^*\left(\tilde{g}^{d+16l}\right)\in \mathcal{R}_{scal>0}(\BKRP {d})\cap\mathcal{R}_{sec\geq 0}(\BKRP {d}).
$$
The \pullback \ by $F_l$ of the non-trivial complex line bundle over $\BKRP {d+16l}$ is isomorphic to the non-trivial complex line bundle $\alpha$ over $\BKRP {d}$. The \pullback \ by $F_l$ of the preferred $\spinc$-structure on $\BKRP {d+16l}$ induces a topological $\spinc$-structure on $\BKRP {d}$. Recall that $\BKRP {d}$ admits precisely two topological $\spinc$-structures for a fixed orientation (see Section \ref{SS: APS eta}). It follows that infinitely many of the induced structures are equivalent to each other as topological $\spinc$-structures  (and possibly not equivalent to the one induced by the preferred $\spinc$-structure on $\BKRP {d}$). Since relative eta-invariants are preserved via \pullback s, together with Claim \ref{CLAIM} we obtain the following result. 

\begin{claim}\label{CLAIM2}

For each $d$ there exists an infinite subset $\{l_i\}_{i\in\N}\subset \N$ and a topological $\spinc$-structure on $\BKRP {d}$ satisfying the following: For $(\BKRP {d} ,h_{l_i})$, the relative eta-invariant of the corresponding twisted $\spinc$-Dirac operator is given by 
$$
 \widetilde{\eta}_\alpha \left(\BKRP {d},h_{l_i} \right) =\widetilde{\eta}_\alpha \left(\BKRP {d+16l_i}, \tilde{g}^{d+16l_i}\right) = - \frac{d+16l_i}{4}.
$$\qed
\end{claim}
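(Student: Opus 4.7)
The plan is to transport the geometric and spectral data from $\BKRP{d+16l}$ back to $\BKRP{d}$ via the oriented diffeomorphisms $F_l$, and then to select an infinite subfamily for which the pulled-back topological $\spinc$-structures all coincide up to equivalence, so that Claim \ref{CLAIM} can be invoked directly.

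First I would record the bundle-theoretic output of the pullback. Since $F_l: \BKRP{d}\to\BKRP{d+16l}$ is an orientation-preserving diffeomorphism, it pulls back the preferred topological $\spinc$-structure on $\BKRP{d+16l}$ to a topological $\spinc$-structure $\mathfrak{s}_l$ on $\BKRP{d}$ whose associated principal $\C^*$-bundle (or, equivalently, its associated complex line bundle) is non-trivial, because the preferred structure on $\BKRP{d+16l}$ has this property and $F_l^*$ preserves it. In particular, $F_l^*$ sends the non-trivial flat line bundle over $\BKRP{d+16l}$ to a flat line bundle isomorphic to $\alpha$, as there is (up to isomorphism) only one such bundle over a homotopy $\R P^5$, classified by the non-trivial class in $H^2(\BKRP{d},\Z) \cong \Z_2$.

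The key step is the pigeonhole argument. As recalled in Section \ref{SS: APS eta}, once the orientation and the non-trivial principal $\C^*$-bundle are fixed, a homotopy $\R P^5$ admits precisely two equivalence classes of topological $\spinc$-structures (in bijection with $H^1(\BKRP{d},\Z_2)\cong\Z_2$). Thus the map $l\mapsto [\mathfrak{s}_l]$ takes values in a two-element set, so there is an infinite subset $\{l_i\}_{i\in\N}\subset\N$ and a single topological $\spinc$-structure $\mathfrak{s}$ on $\BKRP{d}$ such that $\mathfrak{s}_{l_i}\cong \mathfrak{s}$ for every $i$.

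Finally I would invoke the naturality of the relative eta-invariant: if we equip $(\BKRP{d},h_{l_i})$ with $\mathfrak{s}$ and a flat connection on the associated $U(1)$-bundle, then $F_{l_i}$ becomes an orientation-preserving isometry between $(\BKRP{d},h_{l_i})$ and $(\BKRP{d+16l_i},\tilde{g}^{d+16l_i})$ which preserves the $\spinc$-structure and the flat twisting bundle $\alpha$. The spectrum of the twisted $\spinc$-Dirac operator is an isometric invariant of exactly these data, hence so is its eta-invariant; combining this with Claim \ref{CLAIM} yields
\[
\widetilde{\eta}_\alpha(\BKRP{d},h_{l_i})=\widetilde{\eta}_\alpha(\BKRP{d+16l_i},\tilde{g}^{d+16l_i})=-\frac{d+16l_i}{4}.
\]
The only subtle point is the bookkeeping around topological $\spinc$-structures: one must be careful that $\mathfrak{s}$ is possibly \emph{not} the preferred structure on $\BKRP{d}$, which is why Claim \ref{CLAIM2} only asserts the existence of \emph{some} topological $\spinc$-structure with the required values, rather than identifying it with the preferred one; the finiteness of the set of such structures is exactly what forces the invariance of the argument under this ambiguity.
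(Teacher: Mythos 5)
Your proposal is correct and follows essentially the same route as the paper: pull back the preferred topological $\spinc$-structure and the non-trivial flat line bundle via $F_l$, apply the pigeonhole principle to the two possible topological $\spinc$-structures on $\BKRP{d}$ to extract an infinite subfamily with a common structure, and conclude via invariance of the relative eta-invariant under pullback together with Claim \ref{CLAIM}. The caveat you flag at the end — that the common structure need not be the preferred one on $\BKRP{d}$ — is exactly the parenthetical remark the authors make as well.
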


Since the relative eta-invariant is constant on path components of $\mathcal{R}_{scal>0}(\BKRP {d})$, arguing as in \cite[Proposition 2.7]{BKS11} or \cite[Section 2.1]{DKT18} we can conclude the following theorem.

\begin{theorem}\label{THM: aux theorem}
For each $d$, let $\{l_i\}_{i\in\N}$ be the infinite subset from Claim \ref{CLAIM2}. The metrics $\{h_{l_i}\}_{i\in\N}$ represent infinitely many path components of $\mathcal{M}_{sec\geq 0}(\BKRP {d})$ and yield infinitely many path components of $\mathcal{M}_{Ric > 0}(\BKRP {d})$.
\end{theorem}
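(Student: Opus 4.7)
The plan is to deduce the result from Claim~\ref{CLAIM2} by a contradiction argument carried out first on the intermediate quotient $\mathcal{R}_{sec\geq 0}(\BKRP{d})/\mathcal{D}$, where $\mathcal{D}\subset\mathrm{Diff}(\BKRP{d})$ is the subgroup of orientation-preserving diffeomorphisms preserving the topological $\spinc$-structure fixed in Claim~\ref{CLAIM2}. Since an oriented homotopy $\R P^{4k+1}$ admits only two topological $\spinc$-structures with non-trivial associated $U(1)$-bundle (Section~\ref{SS: APS eta}), $\mathcal{D}$ has finite index in $\mathrm{Diff}(\BKRP{d})$, so the natural map $\pi_0(\mathcal{R}_{sec\geq 0}(\BKRP{d})/\mathcal{D})\to\pi_0(\mathcal{M}_{sec\geq 0}(\BKRP{d}))$ is finite-to-one, and it is enough to show that the $h_{l_i}$ hit infinitely many path components of $\mathcal{R}_{sec\geq 0}(\BKRP{d})/\mathcal{D}$. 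For the $Ric>0$ assertion I would first replace each $h_{l_i}$ by a metric $h_{l_i}^{+}\in\mathcal{R}_{Ric>0}(\BKRP{d})$ obtained from a short Ricci flow of $h_{l_i}$ (B\"ohm--Wilking); the flow path stays in $\mathcal{R}_{scal>0}(\BKRP{d})$, so Proposition~\ref{PROP: eta invariant scal} gives $\widetilde{\eta}_\alpha(h_{l_i}^+)=\widetilde{\eta}_\alpha(h_{l_i})$, and the same reduction to the intermediate quotient applies.

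Suppose the conclusion fails for $sec\geq 0$. Pigeonhole yields indices $l_i\neq l_j$ whose classes coincide in $\pi_0(\mathcal{R}_{sec\geq 0}(\BKRP{d})/\mathcal{D})$. The path-lifting property supplied by Ebin's slice theorem (Section~\ref{SS: spaces of metrics}) then produces a continuous path $\gamma\colon[0,1]\to\mathcal{R}_{sec\geq 0}(\BKRP{d})$ with $\gamma(0)=h_{l_i}$ and $\gamma(1)=\psi^{*}h_{l_j}$ for some $\psi\in\mathcal{D}$. The endpoints are pull-backs of Grove--Ziller metrics and so of $scal>0$, while the interior is a priori only of $sec\geq 0$. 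Here I invoke B\"ohm--Wilking a second time: flowing each interior $\gamma(t)$ by the Ricci flow for a time $\varepsilon(t)$ depending continuously on $t$ and vanishing at $t=0,1$ yields a new path $\widetilde\gamma$ with the same endpoints as $\gamma$ whose interior lies in $\mathcal{R}_{Ric>0}(\BKRP{d})$; in particular $\widetilde\gamma$ lies entirely in $\mathcal{R}_{scal>0}(\BKRP{d})$.

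Proposition~\ref{PROP: eta invariant scal} applied to $\widetilde\gamma$ gives $\widetilde{\eta}_\alpha(\BKRP{d},h_{l_i})=\widetilde{\eta}_\alpha(\BKRP{d},\psi^{*}h_{l_j})$, and since $\psi\in\mathcal{D}$ preserves the topological $\spinc$-structure (and the canonically defined flat line bundle $\alpha$), it intertwines the two twisted $\spinc$-Dirac operators attached to $h_{l_j}$ and $\psi^{*}h_{l_j}$, so pull-back invariance of the relative eta-invariant yields $\widetilde{\eta}_\alpha(\BKRP{d},\psi^{*}h_{l_j})=\widetilde{\eta}_\alpha(\BKRP{d},h_{l_j})$. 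Combining with Claim~\ref{CLAIM2} this forces $-(d+16l_i)/4=-(d+16l_j)/4$, contradicting $l_i\neq l_j$. The $Ric>0$ statement follows from the same blueprint applied to the metrics $h_{l_i}^{+}$: no Ricci-flow deformation of the interior is required because a lifted path in $\mathcal{R}_{Ric>0}$ already lies in $\mathcal{R}_{scal>0}$. The main technical obstacle is the step that converts the $sec\geq 0$ path into one of $scal>0$ with the same endpoints, where the B\"ohm--Wilking evolution is used in an essential way; all other ingredients (Ebin's slice theorem, pull-back invariance of $\widetilde{\eta}_\alpha$ under $\mathcal{D}$, and the $scal>0$ invariance of Proposition~\ref{PROP: eta invariant scal}) are already in place.
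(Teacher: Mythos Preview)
Your proof is correct and follows essentially the same route as the paper: reduce to the intermediate quotient by the finite-index subgroup $\mathcal{D}$ preserving the topological $\spinc$-structure, lift a hypothetical path via Ebin's slice theorem, use B\"ohm--Wilking to push the $sec\geq 0$ path into $\mathcal{R}_{scal>0}$, and derive a contradiction from Proposition~\ref{PROP: eta invariant scal} together with Claim~\ref{CLAIM2} and the pull-back invariance of $\widetilde{\eta}_\alpha$ under $\mathcal{D}$. The only cosmetic difference is that the paper flows \emph{every} metric in the path for a fixed small time and then concatenates with the endpoint trajectories, whereas you flow for a time $\varepsilon(t)$ vanishing at the endpoints; both produce a path in $\mathcal{R}_{scal>0}$ with the same endpoints, and your treatment of the $Ric>0$ case via $h_{l_i}^{+}$ is likewise equivalent to the paper's observation that the Ricci-flowed metrics lie in distinct components of $\mathcal{R}_{Ric>0}(\BKRP{d})/\mathcal{D}$.
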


This implies Theorem A. For the convenience of the reader we recall the details from the arguments in \cite{BKS11,DKT18} with the adequate adaptations.

Let $\mathcal D $ denote the subgroup of diffeomorphisms of $\BKRP {d}$ which preserve its topological $\spinc$-structure. Note that $\mathcal D $ has finite index in the full diffeomorphism group $\text{Diff}(\BKRP {d})$, since $\BKRP {d}$ has only two topological $\spinc$-structures for a fixed orientation. Hence, it suffices to show that the metrics represent infinitely many path components of the quotient space $\mathcal{R}_{sec\geq0}(\BKRP {d})/ \mathcal D$ and yield infinitely many path components of the quotient space $\mathcal{R}_{Ric>0}(\BKRP {d})/ \cal D$ (see also \cite[p. 516]{BG95}).

We argue by contradiction. Suppose there is a path $\tilde{\gamma}:[0,1]\to\mathcal{R}_{sec\geq0}(\BKRP {d})/ \mathcal D$ connecting $[h_{l_i}]$ to $[h_{l_j}]$ with $l_i\neq l_j$. By Ebin's slice theorem (see Section \ref{SS: spaces of metrics}), this path can be lifted to a continuous path $\gamma$ in $\mathcal{R}_{sec\geq 0}(\BKRP {d})$ with $\gamma(0)=h_{l_i}$ and $\gamma(1)=\Phi^*(h_{l_j})$ for some $\Phi\in\cal D$.

By the work of B\"ohm and Wilking in \cite{BW07}, a metric on $M_d$ of $sec\geq 0$ evolves under the Ricci flow immediately to a metric of $Ric>0$. Thus, by applying the Ricci flow to every metric in the path $\gamma$ yields a path of metrics of $Ric>0$. Concatenation of this path and the trajectories of the endpoints of $\gamma$ gives a path $\hat{\gamma}$ with the same endpoints as $\gamma$ and whose interior-points lie in $\mathcal{R}_{Ric>0}(\BKRP {d})$; in particular the entire path $\hat{\gamma}$ lies in $\mathcal{R}_{scal>0}(\BKRP {d})$. 

By Claim \ref{CLAIM2} there is a topological $\spinc$-structure on $M_d$ for which the relative eta-invariant of the endpoints of $\hat{\gamma}$ is given by
\begin{equation}\label{EQ: etas endpoints}
\widetilde{\eta}_\alpha \left(\BKRP {d},h_{l_i} \right) = - \frac{d+16l_i}{4}, \qquad \widetilde{\eta}_\alpha \left(\BKRP {d},\Phi^*(h_{l_j}) \right) = - \frac{d+16l_j}{4},
\end{equation}
where the second identity follows from the fact that the relative eta-invariant is preserved under \pullback . Since the quantities in \eqref{EQ: etas endpoints} are different and the relative eta-invariant is constant on path components of $\mathcal{R}_{scal>0}(\BKRP {d})$ (see Proposition \ref{PROP: eta invariant scal}) we get a contradiction.

Hence the classes $\{[h_{l_i}]\}_{i\in\N}$ belong to pairwise different path components of $\mathcal{R}_{sec\geq0}(\BKRP {d})/ \mathcal D$, and moreover the proof shows that the deformed metrics of $Ric > 0$ belong to different path components of $\mathcal{R}_{Ric>0}(\BKRP {d})/ \cal D$. By the discussion above this completes the proof.\qed

\begin{proof}[Proof of Claim \ref{CLAIM}] 

In the proof we will drop the dimension from the notation and we set $\BKRPeps {d} {\epsi} := \Sigma _{\epsi }(d)/\tau$. Hence, $\BKRPeps {d} {0}=\BKRP {d}$.  The basic idea is to use Cheeger deformations and apply the computation for eta-invariants on $\BKRPeps {d} {\epsi} $ of Section \ref{Subsection: Computation of eta-invariants}. For $\epsi _0>0$ define 
$$
\tilde{Z}(d):=\tilde f_d^{-1}([0,\epsi _0])=\overset {\cdot}\bigcup_{0\leq \epsi \leq \epsi _0} \BKRPeps {d} {\epsi}.
$$
Recall that $O(3)$ acts on $
\tilde{Z}(d)$ and each subspace $\BKRPeps {d} {\epsi} $. Recall also that the universal cover of $\BKRPeps {d} {\epsi _0}$ is the boundary of the $O(3)$-manifold $W _{\epsi _0}(d)$. We construct the following invariant metrics on them:

\bigskip
\noindent
\begin{tabularx}{\linewidth}{ r X } $(\BKRPeps {d} {0},\tilde{g}^d)$ & This is just the Grove-Ziller metric described at the beginning of the present section, which is in particular $O(3)$-invariant.\\
$(\tilde{Z}(d), \tilde{h})$ & Extend the metric $\tilde{g}^d$ to a metric $\tilde{h}$ on $\tilde{Z}(d)$. After averaging the metric over $O(3)$, we can assume that $\tilde h$ is in addition $O(3)$-invariant.\\
$(W _{\epsi _0}(d),k)$ & Consider the restriction of $\tilde{h}$ to $\BKRPeps {d}{\epsi _0} $ and lift it to $\Sigma _{\epsi _0}(d)$. The latter is an $O(3)$-invariant metric and we can extend it to a metric $k$ on $W _{\epsi _0}(d)$ such that $k$ is of product form near the boundary $\Sigma _{\epsi _0}(d)$. Moreover we may choose the metric $k$ such that its restriction to a sufficiently small $O(3)$-invariant neighborhood of each $\tau$-fixed point is an $O(3)$-invariant metric of positive sectional curvature (see the remark after Corollary \ref{COR: deformation psc 2}). After averaging we can assume that $k$ is in addition $O(3)$-invariant on all of $W _{\epsi _0}(d)$.
\end{tabularx}

\bigskip
\noindent
Now we use the $O(3)$-actions to produce Cheeger deformations $\tilde{g}_t^d$, $\tilde{h}_t$ and $k_t$ of the metrics above, where $t \geq 0$ denotes the parameter of the deformation. By the results in Section \ref{Section: Cheeger def} there exists $t_0\geq 0$ such that the following properties hold:
\begin{enumerate}
\item[(P1)] The restriction of $\tilde{h}_t$ to $\BKRPeps{d}{\epsi }$  has $scal>0$ for every $t\geq t_0$ and for every $0 \leq \epsi \leq \epsi_0$ (observe that the restriction of $\tilde{h}_t$ to $\BKRPeps{d}{\epsi }$ equals the Cheeger deformation of the restriction of $\tilde{h}$ to $\BKRPeps{d}{\epsi }$ at time $t$). This follows by Proposition \ref{PROP: scal for large t}.

\item[(P2)] The metric $k_t$ on $W _{\epsi _0}(d)$ has $scal >0$ and is of product form near the boundary for every $t\geq t_0$. This follows by Corollary \ref{COR: deformation psc 2}. Observe that the metric $k_t$ restricted to the boundary $\Sigma _{\epsi _0}(d)$ descends to a metric on $\BKRPeps{d}{\epsi _0}$ which agrees with the metric $\tilde{h}_t$ restricted to $\BKRPeps{d}{\epsi _0}$.
\end{enumerate}
Now we construct a path $\tilde{\gamma}$ in $\mathcal{R}_{scal>0}(\BKRPeps{d}{\epsi _0})$ as follows. Recall from the proof of Proposition \ref{PROP: quotients by tau are diffeomorphic for small e} that we can fix a trivialization $\BKRPeps{d}{\epsi_0}\times [0,\epsi_0]\overset \cong \longrightarrow \tilde Z(d)$
 which extends the identity $\BKRPeps{d}{\epsi_0}\times \{\epsi_0\}\longrightarrow \tilde Z(d)\vert _{\BKRPeps{d}{\epsi_0}}$  (such a trivialization can also be constructed using a normalized gradient flow). Let us denote the associated family of diffeomorphisms by
$$
\varphi _\epsi : \BKRPeps{d}{\epsi_0}\to \BKRPeps{d}{\epsi},\qquad \epsi\in [0,\epsi_0].$$
Since $\varphi _{\epsi _0}$ is the identity on $\BKRPeps{d}{\epsi_0}$ by continuity the diffeomorphisms $\varphi _\epsi$, $\epsi\in [0,\epsi_0]$, preserve the preferred topological $\spinc$-structures. We use the Cheeger deformation of $\tilde{g}^d$ and the family $\varphi _\epsi $ to construct two paths of metrics on $\BKRPeps{d}{\epsi _0}$:
$$
\begin{array}{rclccrcl}
\tilde{\gamma}_1:[0, t_0] & \longrightarrow & \mathcal{R}_{scal>0}(\BKRPeps{d}{\epsi _0}), & &  & \tilde{\gamma}_2:[0, \epsi_0] & \longrightarrow & \mathcal{R}_{scal>0}(\BKRPeps{d}{\epsi _0})\\
t & \longmapsto & \varphi _0^* \left(\tilde{g}_{t}^d\right) & & & \epsi & \longmapsto & \varphi _\epsi^* \left(\tilde{h}_{t_0}\vert_{\BKRPeps{d}{\epsi }} \right)
\end{array}
$$
Observe that the metrics in $\tilde{\gamma}_1$ are of $scal>0$ by Corollary \ref{COR: deformation psc cohomo 1}, while the metrics in $\tilde{\gamma}_2$ are of $scal>0$ by (P1) above. Note that $\tilde{\gamma}_1(t_0)= \varphi _0^* (\tilde{g}_{t_0}^d)=\varphi _0^* (\tilde{h}_{t_0}\vert_{\BKRPeps{d}{0}})=\tilde{\gamma}_2(0)$.

The concatenation of the paths $\tilde{\gamma}_1$ and $\tilde{\gamma}_2$ determines a path $\tilde{\gamma}$ in $\mathcal{R}_{scal>0}(\BKRPeps{d}{\epsi _0})$, whose endpoints are $\varphi _0^*(\tilde g^d)$ and $\tilde{h}_{t_0}\vert_{\BKRPeps{d}{\epsi _0}}$.

We finish with the computation of $\widetilde{\eta}_\alpha \left(\BKRP{d} ,\tilde{g}^d\right)$. Recall that the diffeomorphisms $\varphi _\epsi$ preserve the preferred topological $\spinc$-structures, so that from now on relative eta-invariants on $\BKRPeps{d}{\epsi }$ are understood to be defined \wrt \ the preferred topological $\spinc$-structures, the flat connection
 on the associated principal $U(1)$-bundle and the non-trivial flat complex line bundle $\alpha $.

Note that $\widetilde{\eta}_\alpha \left(\BKRP{d} ,\tilde{g}^d\right)$ equals $\widetilde{\eta}_\alpha \left(\BKRPeps{d}{\epsi _0},\varphi _0^*(\tilde g^d)\right)$ because the relative eta-invariant is preserved under \pullback s. Since the path $\tilde{\gamma}$ constructed above lies in $\mathcal{R}_{scal>0}(\BKRPeps{d}{\epsi _0})$, it follows by Proposition \ref{PROP: eta invariant scal} that moreover $\widetilde{\eta}_\alpha \left(\BKRPeps{d}{\epsi _0},\varphi _0^*(\tilde g^d)\right)$ equals $\widetilde{\eta}_\alpha \left(\BKRPeps{d}{\epsi _0},\tilde{h}_{t_0}\vert_{\BKRPeps{d}{\epsi _0}}\right)$. 

Now (P2) above is precisely telling us that the metric $\tilde{h}_{t_0}\vert_{\BKRPeps{d}{\epsi _0}}$ on $\BKRPeps{d}{\epsi _0}$ satisfies the assumptions in Proposition \ref{PROP: eta of special metrics with s nonzero}, which gives us the value of the relative eta-invariant. The discussion can be summarized in the following way:
$$\widetilde{\eta}_\alpha \left(\BKRP{d} ,\tilde{g}^d\right)  =\widetilde{\eta}_\alpha \left(\BKRPeps{d}{\epsi _0},\varphi _0^*(\tilde g^d)\right)=\widetilde{\eta}_\alpha \left(\BKRPeps{d}{\epsi _0},\varphi _0^*(\tilde g^d_{t_0})\right)=\widetilde{\eta}_\alpha \left(\BKRPeps{d}{\epsi _0},\tilde{h}_{t_0}\vert_{\BKRPeps{d}{\epsi _0}}\right)=-\frac d {4}.$$
\end{proof}

Note that in the proof of Claim \ref{CLAIM} we are only using the dimension restriction and the fact that $\tilde{g}^d$ is of $sec\geq 0$ to ensure that the Cheeger deformation $\tilde{g}^d_t$ is of $scal > 0$ for every $t\geq 0$. If one assumes this property for large $t$ from the beginning the same proof gives the following general result.

\begin{claim}\label{CLAIM: computation of eta-inv for Cheeger scal>0} Let $d,n$ odd and $d,n\geq 3$. Equip $M^{2n-1}_d:=\Sigma_0^{2n-1}(d)/\tau$ with the preferred $\spinc$-structure, a flat connection on the associated principal $U(1)$-bundle and let $\alpha$ be the (unique) non-trivial flat complex line bundle over $M^{2n-1}_d$. Let $\tilde{g}$ be an $O(n)$-invariant metric satisfying the following: there exists $t_0\geq 0$ such that the Cheeger deformation $\tilde{g}_t$ is of $scal > 0$ for every $t\geq t_0$.
 Then for the metric $\tilde{g}_{t_0}$, the relative eta-invariant of the corresponding $\spinc$-Dirac operator twisted with $\alpha $ is equal to
$$\widetilde{\eta}_\alpha \left(M_d^{2n-1} ,\tilde{g}_{t_0}\right)=-2^{-(n-1)}d.$$
\qed
\end{claim}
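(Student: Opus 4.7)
The plan is to follow the proof of Claim \ref{CLAIM} essentially verbatim, with only minor modifications to accommodate the weaker curvature hypothesis on $\tilde{g}$. The key observation is that the proof of Claim \ref{CLAIM} uses $sec \geq 0$ only to guarantee that the Cheeger deformation $\tilde{g}^d_t$ has $scal>0$ for every $t\geq 0$ (in particular, so that the subpath $\tilde\gamma_1$ lies in $\mathcal{R}_{scal>0}$). Here this is replaced by the direct assumption that $\tilde{g}_t$ has $scal>0$ for every $t\geq t_0$, and the target metric is simply shifted from $\tilde{g}$ to $\tilde{g}_{t_0}$.

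First I would construct the auxiliary metrics $\tilde{h}$ on $\tilde Z(d):=\bigcup_{0\leq \epsi\leq\epsi_0} \BKRPeps{d}{\epsi}$ and $k$ on $W_{\epsi_0}(d)$ exactly as in the proof of Claim \ref{CLAIM}: extend $\tilde{g}$ to a smooth metric on $\tilde Z(d)$, lift its restriction to $\BKRPeps{d}{\epsi_0}$ to $\Sigma_{\epsi_0}(d)$, and extend that to a metric on $W_{\epsi_0}(d)$ which is of product form near the boundary and of positive sectional curvature on small invariant neighborhoods of the $\tau$-fixed points (using the remark following Corollary \ref{COR: deformation psc 2}). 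Averaging over $O(n)$ at each step preserves the equivariance. Since the $O(n)$-orbits are contained in the fibers $\BKRPeps{d}{\epsi}$, the restriction of the Cheeger deformation $\tilde{h}_t$ to $\BKRPeps{d}{0}$ coincides with $\tilde{g}_t$, so in particular $\tilde{h}_{t_0}\vert_{\BKRPeps{d}{0}}=\tilde{g}_{t_0}$.

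Next I would combine Propositions \ref{PROP: Cheeger sec and scal}, \ref{PROP: scal for large t} and Corollary \ref{COR: deformation psc 2}, together with compactness of $\tilde Z(d)$ and $W_{\epsi_0}(d)$ and the existence of non-commuting elements in $\mathfrak{m}_p$ throughout these spaces (the same verification as in Section \ref{Section: Cheeger def}, valid for all $n\geq 3$), to find some $T\geq t_0$ for which simultaneously (P1') the restriction of $\tilde{h}_T$ to every $\BKRPeps{d}{\epsi}$, $\epsi\in[0,\epsi_0]$, is of $scal>0$, and (P2') the metric $k_T$ is of $scal>0$ on all of $W_{\epsi_0}(d)$ and is of product form near the boundary. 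Now define the path $\tilde{\gamma}$ in $\mathcal{R}_{scal>0}(\BKRPeps{d}{\epsi_0})$ as the concatenation of the Cheeger-deformation subpath $s\mapsto \varphi_0^*(\tilde{g}_s)$, $s\in[t_0,T]$ (which lies in $\mathcal{R}_{scal>0}$ by hypothesis), with the trivialization subpath $\epsi\mapsto \varphi_\epsi^*(\tilde{h}_T\vert_{\BKRPeps{d}{\epsi}})$, $\epsi\in[0,\epsi_0]$ (which lies in $\mathcal{R}_{scal>0}$ by (P1')), using the diffeomorphisms $\varphi_\epsi:\BKRPeps{d}{\epsi_0}\to\BKRPeps{d}{\epsi}$ constructed via the gradient flow of $\fdn{d}{n+1}$. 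These $\varphi_\epsi$ preserve the preferred topological $\spinc$-structures by continuity, and the endpoints of $\tilde\gamma$ are $\varphi_0^*(\tilde{g}_{t_0})$ and $\tilde{h}_T\vert_{\BKRPeps{d}{\epsi_0}}$.

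Finally I would combine pullback invariance of the relative eta-invariant, its constancy along path components of $\mathcal{R}_{scal>0}$ (Proposition \ref{PROP: eta invariant scal}), and Proposition \ref{PROP: eta of special metrics with s nonzero} applied to $\tilde{h}_T\vert_{\BKRPeps{d}{\epsi_0}}$ (whose lift extends by (P2') to the bordism $k_T$ on $W_{\epsi_0}(d)$ with the required properties) to obtain
$$\widetilde{\eta}_\alpha(M_d^{2n-1},\tilde{g}_{t_0})=\widetilde{\eta}_\alpha(\BKRPeps{d}{\epsi_0},\varphi_0^*(\tilde{g}_{t_0}))=\widetilde{\eta}_\alpha(\BKRPeps{d}{\epsi_0},\tilde{h}_T\vert_{\BKRPeps{d}{\epsi_0}})=-2^{-(n-1)}d.$$
I expect the only point requiring care is the simultaneous choice of $T$ for which (P1') and (P2') both hold, since one must verify that the non-commuting-isotropy hypothesis of Proposition \ref{PROP: scal for large t} holds uniformly on the compact collar $\tilde Z(d)$ and on $W_{\epsi_0}(d)\setminus(\bigcup U_i)$ in general dimension; this amounts to a straightforward inspection of the isotropy groups of the $O(n)$-action, which reduce to subgroups containing non-abelian copies of $O(n-1)$ or $O(n-2)$.
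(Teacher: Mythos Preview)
Your proposal is correct and follows essentially the same approach as the paper: the authors themselves note that the proof of Claim~\ref{CLAIM} only uses $sec\geq 0$ to ensure $\tilde{g}^d_t$ has $scal>0$ for all $t\geq 0$, and that assuming this for large $t$ directly yields Claim~\ref{CLAIM: computation of eta-inv for Cheeger scal>0} by the same argument. Your only adjustment---starting the Cheeger-deformation subpath at $t_0$ rather than at $0$ and choosing a common $T\geq t_0$ for (P1$'$) and (P2$'$)---is exactly the modification the paper has in mind.
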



\section{Proof of Theorem B}\label{Section: proof Theorem B}
As in the proof of Theorem A we will use relative eta-invariants to distinguish path components of the moduli space. Again, the manifolds involved in the proof are the quotients of Brieskorn spheres $\Sigma_0^{2n-1}(d)/\tau$ with $d,n$ odd and $n\geq 3$ as discussed in Section \ref{Section: Review of Brieskorn spheres}. These quotients are homotopy equivalent to a real projective space of dimension $4k+1=2n-1$. We will also consider the regular case $d=1$ for which $\Sigma_0^{4k+1}(1)/\tau$ can be identified directly with $\R P^{4k+1}=S^{4k+1}/\{\pm Id\}$. Let
$$
M^{4k+1}_d:=\Sigma_0^{4k+1}(d)/\tau. 
$$
As shown by L\'opez de Medrano there are only finitely many smooth homotopy $\R P^{4k+1}$s up to orientation-preserving diffeomorphisms \cite[\S IV.3.4 and Ch. V]{L71}. Hence, the manifolds $M^{4k+1}_d$ with $d$ odd belong to finitely many oriented diffeomorphism types. From now on we will always assume that all maps are orientation-preserving if not otherwise stated.

Atiyah and Bott \cite[p. 489]{AB68},  \cite{A77} used equivariant index theory to prove that if two Brieskorn quotients $M^{4k+1}_d$ and $M^{4k+1}_{d^\prime}$ are diffeomorphic then $d\equiv \pm d^\prime \bmod 2^{2k+2}$ (prior to \cite{A77} Giffen \cite{G69} has shown that there are at least $2^{2k}$ pairwise non-diffeomorphic Brieskorn quotients $M^{4k+1}_d$ distinguished by their smooth normal invariant, which only depends on $d \bmod 2^{2k+2}$).

For each integer $k\geq 1$ and each odd $d_0$, $0<d_0<2^{2k+1}$, there exists an infinite sequence $\{l_i\}_{i\in \N}$ of pairwise distinct integers $l_i\in 2^{2k+2}\N $ such that all the manifolds in the sequence $\{M^{4k+1}_{d_0+l_i}\}_{i\in\N}$ are diffeomorphic to each other. To prove Theorem B it suffices to show that for any such $d_0$ the moduli space $\mathcal{M}_{Ric > 0}(M^{4k+1}_{d_0})$ has infinitely many path components.

It has been shown by various methods that the Brieskorn spheres and their quotients $M^{4k+1}_d$, $d$ odd, admit metrics of positive Ricci curvature. The first construction we know of is due to Cheeger who used the cohomogeneity one action on Brieskorn spheres to exhibit metrics of $Ric\geq 0$. He also pointed out that in combination with deformation techniques of Aubin and Ehrlich this yields a metric of $Ric > 0$ which is invariant under the action of $\so2\times O(2k+1)$ and hence descends to $M^{4k+1}_d$ (see \cite{C72},
see also \cite{H75}).  It was later shown by Grove and Ziller \cite{GZ02} as well as Schwachh\"ofer and Tuschmann \cite{ST04} that every manifold with finite fundamental group and a cohomogeneity one action admits an invariant metric of $Ric >0$ (see \cite{GVWZ06} for obstructions to the existence of invariant metrics of $sec\geq 0$).

In \cite{W97} Wraith used surgery to construct metrics of $Ric> 0$ on Brieskorn spheres and, more generally, on all homotopy spheres which bound parallelizable manifolds. Boyer, Galicki and Nakamaye gave in \cite{BGN03} a new proof of the existence of metrics of $Ric> 0$ on these homotopy spheres based on Sasakian geometry for links of isolated singularities.  Another more recent approach which we learned from Speran\c{c}a uses Cheeger deformations for $G$-$G$ bundles.

These constructions (after slight modification if necessary) yield metrics of positive Ricci curvature on the quotients $M^{4k+1}_d$. It would be interesting to understand whether these metrics all induce elements in the same path component of the moduli space $\mathcal{M}_{Ric > 0}(M^{4k+1}_{d})$. It is likely that Proposition \ref{PROP: eta of special metrics with s nonzero} can be used to show that at least all these metrics have the same relative eta-invariants. We will verify this in two instances and thereby prove the following

\begin{proposition}\label{PROP: eta-inv of Ric metrics}
The manifold $M^{4k+1}_d$ admits a metric $\tilde{g}^d$ of $Ric >0$ for which the following holds. Equip $M^{4k+1}_d$ with the preferred $\spinc$-structure (see Section \ref{Subsection: Computation of eta-invariants}), fix a flat connection
 on the associated principal $U(1)$-bundle and let $\alpha$ be the (unique) non-trivial flat complex line bundle over $M^{4k+1}_d$. Then the relative eta-invariant of the corresponding twisted $\spinc$-Dirac operator is equal to 
$$\widetilde{\eta}_\alpha \left(M^{4k+1}_d ,\tilde{g}^d\right)=-2^{-2k}d.$$
\end{proposition}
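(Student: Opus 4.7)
The plan is to reduce the proposition to Claim~\ref{CLAIM: computation of eta-inv for Cheeger scal>0} (the dimension-general version of the eta computation used in Theorem~A). With $n = 2k+1$, that claim gives $\widetilde{\eta}_\alpha(M^{4k+1}_d, \tilde{g}_{t_0}) = -2^{-(n-1)}d = -2^{-2k}d$ for any $O(2k+1)$-invariant metric whose Cheeger deformation is of $\mathrm{scal} > 0$ for $t \geq t_0$. So it suffices to produce an $O(2k+1)$-invariant metric $\tilde{g}^d$ of $Ric > 0$ on $M^{4k+1}_d$ for which Claim~\ref{CLAIM: computation of eta-inv for Cheeger scal>0} applies and for which $\tilde{g}^d_{t_0}$ is still of $Ric > 0$ (or, via a path argument, transfer the eta-value back to a genuine $Ric > 0$ representative).

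For the construction, the cleanest source is the cohomogeneity one construction of Grove--Ziller or Schwachh\"ofer--Tuschmann, which equips $M^{4k+1}_d$ with an $\so2 \times O(2k+1)$-invariant metric of $Ric > 0$; the required $O(2k+1)$-invariance is automatic and no averaging is needed. Alternatively, Wraith's surgery metric or Speran\c{c}a's $G$-$G$-bundle construction can be used on $\Sigma_0^{4k+1}(d)$ and descended to the quotient, with a careful symmetrization over $O(2k+1)$ that does not destroy positive Ricci curvature. To verify the hypothesis of Claim~\ref{CLAIM: computation of eta-inv for Cheeger scal>0}, I would inspect the $O(2k+1)$-isotropy structure on $M^{4k+1}_d$, which is inherited from $\Sigma_0^{4k+1}(d)$ through the free $\tau$-quotient: the principal isotropy is $O(2k-1)$ with a small list of non-principal isotropies. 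At every point $p$ one exhibits $X, Y \in \mathfrak{m}_p$ with $[P_p X, P_p Y] \neq 0$, exactly the non-commutativity used in the proof of Corollary~\ref{COR: deformation psc 2}. Compactness of $M^{4k+1}_d$ together with Proposition~\ref{PROP: scal for large t} then supplies $t_0 \geq 0$ with $\mathrm{scal}_{\tilde{g}^d_t} > 0$ for all $t \geq t_0$, and Claim~\ref{CLAIM: computation of eta-inv for Cheeger scal>0} yields $\widetilde{\eta}_\alpha(M^{4k+1}_d, \tilde{g}^d_{t_0}) = -2^{-2k}d$.

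The main obstacle is to certify that the metric attached to the target eta-value is of $Ric > 0$. If one starts from an initial metric that is both $sec \geq 0$ and $Ric > 0$ — which the cohomogeneity one construction can be arranged to provide — Proposition~\ref{PROP: Cheeger sec and scal} lets us take $t_0 = 0$, and $\tilde{g}^d_0 = \tilde{g}^d$ already satisfies both conditions simultaneously. Failing that, I would either invoke the explicit formulas for the deformed Ricci tensor from M\"uter's framework to show that $Ric > 0$ persists for a uniform interval of small $t$ large enough to cover the $t_0$ furnished by Proposition~\ref{PROP: scal for large t}, or run the full bordism machinery of Claim~\ref{CLAIM}: lift $\tilde{g}^d$ to $\Sigma_0^{4k+1}(d)$, extend across the family $\tilde{Z}(d)$ and on to $W^{4k+2}_\epsi(d)$ with the properties required by Corollary~\ref{COR: deformation psc 2} and Proposition~\ref{PROP: eta of special metrics with s nonzero}, Cheeger-deform the extension, and use constancy of the relative eta-invariant on path components of $\mathcal{R}_{scal > 0}(M^{4k+1}_d)$ (Proposition~\ref{PROP: eta invariant scal}) to transfer the eta-value back to the original $\tilde{g}^d$ of $Ric > 0$.
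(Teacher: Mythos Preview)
Your overall strategy---reduce to Claim~\ref{CLAIM: computation of eta-inv for Cheeger scal>0} by producing an $O(2k+1)$-invariant metric whose Cheeger deformation at the relevant time $t_0$ has $Ric>0$---is exactly the paper's first approach. The gap is in how you resolve what you yourself identify as ``the main obstacle.'' Your option (a) fails: the claim that the cohomogeneity one construction ``can be arranged to provide'' a metric with both $sec\geq 0$ and $Ric>0$ is not known for $4k+1>5$ and is in fact obstructed for the Kervaire spheres among $\Sigma_0^{4k+1}(d)$ (see \cite{GVWZ06}); the Grove--Ziller $sec\geq 0$ argument needs both singular orbits of codimension $\leq 2$, whereas here one has codimension $2k$. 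Option (b) is a hope, not an argument. Option (c) does not close the gap either: to transfer the eta-value from $\tilde g^d_{t_0}$ back to $\tilde g^d$ you need a path in $\mathcal R_{scal>0}$, and the Cheeger path $t\mapsto \tilde g^d_t$ on $[0,t_0]$ is not known to stay in $scal>0$ without $sec\geq 0$ (this is precisely where Corollary~\ref{COR: deformation psc cohomo 1} is used in the $5$-dimensional proof). Your aside about symmetrizing Wraith's metric over $O(2k+1)$ is also problematic, since averaging is not known to preserve $Ric>0$.

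The paper's fix is to use the Speran\c{c}a $G$--$G$ bundle not merely as a source of a $Ric>0$ metric, but specifically to feed it into the Searle--Wilhelm machinery \cite{SW15}: one pulls back the round metric on $S^{2n-1}$ through the $O(n)$--$O(n)$ bundle $P_d$ to an $O(n)$-invariant metric on $\Sigma_0^{2n-1}(d)$ whose quotient has $sec>0$ on the regular part, and then Searle--Wilhelm guarantees that after a conformal change the Cheeger deformation has $Ric>0$ for \emph{all} $t\geq t_0$. This simultaneously gives $Ric>0$ at $t_0$ and $scal>0$ for $t\geq t_0$, so Claim~\ref{CLAIM: computation of eta-inv for Cheeger scal>0} applies with $\tilde g^d:=\tilde g_{t_0}$ itself of $Ric>0$. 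The paper's alternative proof via Wraith is also different from what you sketch: it makes Wraith's plumbing construction $\tau$-invariant (not $O(2k+1)$-invariant), and then computes the eta-invariant directly via Donnelly's fixed point formula on the plumbing manifold $W(d)$, bypassing Claim~\ref{CLAIM: computation of eta-inv for Cheeger scal>0} entirely.
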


As mentioned above, we give two independent proofs for this proposition below, one using Speran\c{c}a's approach via $G$-$G$ bundles, the other one using a construction of Wraith. We postpone the corresponding proofs to Section  \ref{SS: Speranca approach} and \ref{SS: Wraith approach}, respectively, and we continue here with the proof of Theorem B.

\smallskip

Recall that there exists a diffeomorphism
 $F_{l_i} :M^{4k+1}_{d_0}\to M^{4k+1}_{d_0+l_i}$ for every $i\in\N$. We denote by $h_{l_i}$ the \pullback \ of the metric $\tilde{g}^{d_0+l_i}$ on $M^{4k+1}_{d_0+l_i}$ to $M^{4k+1}_{d_0}$, i.e.,
$$
h_{l_i}:=F_{l_i}^*\left(\tilde{g}^{d_0+l_i}\right)\in \mathcal{R}_{Ric>0}(M^{4k+1}_{d_0}).
$$
As in the proof of Theorem A, by looking at the \pullback \ by $F_{l_i}$ of the bundle $\alpha$ and of the preferred $\spinc$-structure on $M^{4k+1}_{d_0+l_i}$ one obtains the following result (using the fact that relative eta-invariants are preserved via \pullback s together with Proposition \ref{PROP: eta-inv of Ric metrics}).

\begin{claim}\label{CLAIM2 for RIC}
There exists an infinite subset $\{l_i'\}\subset\{l_i\}$ with $i\in\N$ and a topological $\spinc$-structure on $M^{4k+1}_{d_0}$ satisfying the following. For $(M^{4k+1}_{d_0} ,h_{l_i'})$, the relative eta-invariant of the corresponding $\spinc$-Dirac operator twisted with $\alpha$ is equal to 
$$
 \widetilde{\eta}_\alpha \left(M^{4k+1}_{d_0},h_{l_i'} \right) =\widetilde{\eta}_\alpha \left(M^{4k+1}_{d_0 + l_i'}, \tilde{g}^{d_0+l_i'}\right) = -2^{-2k}\left( d_0 + l_i' \right).
$$\qed
\end{claim}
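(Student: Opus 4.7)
The plan is to follow the pigeonhole strategy employed in the analogous Claim \ref{CLAIM2} of Theorem A. Recall from Section \ref{SS: APS eta} that an oriented homotopy $\R P^{4k+1}$ carries precisely two topological $\spinc$-structures whose associated principal $\C^*$-bundle is non-trivial, up to equivalence. For each $i\in\N$, the diffeomorphism $F_{l_i}:M^{4k+1}_{d_0}\to M^{4k+1}_{d_0+l_i}$ pulls back the preferred $\spinc$-structure (with its underlying non-trivial principal $\C^*$-bundle) on the target to some topological $\spinc$-structure on $M^{4k+1}_{d_0}$. Since there are only two such structures, an application of the pigeonhole principle yields an infinite subset $\{l_i'\}\subset\{l_i\}$ and a single topological $\spinc$-structure $\mathfrak{s}$ on $M^{4k+1}_{d_0}$ which arises as $F_{l_i'}^*$ of the preferred $\spinc$-structure on $M^{4k+1}_{d_0+l_i'}$ for every $i\in\N$.

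Next I would check that the remaining data entering the definition of the relative eta-invariant transports correctly under $F_{l_i'}$. The flat complex line bundle $\alpha$ on $M^{4k+1}_{d_0+l_i'}$ is associated to the unique non-trivial representation $\pi_1(M^{4k+1}_{d_0+l_i'})\cong\Z_2\to U(1)$; since $F_{l_i'}$ induces an isomorphism of fundamental groups, its pullback is the unique non-trivial flat complex line bundle on $M^{4k+1}_{d_0}$, which is exactly $\alpha$ there. Moreover, the fixed flat connection on the principal $U(1)$-bundle associated to $\mathfrak{s}$ may be chosen to be the pullback of the flat connection on the target, since the moduli space of flat $U(1)$-connections on the non-trivial $U(1)$-bundle over a homotopy $\R P^{4k+1}$ is a single gauge-equivalence class, and relative eta-invariants depend only on the gauge-equivalence class.

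With these identifications in place, the relative eta-invariant is preserved by $F_{l_i'}^*$, for it depends only on the spectrum of the twisted $\spinc$-Dirac operator, which is an isometric invariant of the transported data. Combining this pullback invariance with Proposition \ref{PROP: eta-inv of Ric metrics} yields
$$\widetilde{\eta}_\alpha\left(M^{4k+1}_{d_0},h_{l_i'}\right)=\widetilde{\eta}_\alpha\left(M^{4k+1}_{d_0+l_i'},\tilde{g}^{d_0+l_i'}\right)=-2^{-2k}\left(d_0+l_i'\right),$$
which is the asserted equality. The main obstacle I anticipate is the bookkeeping involved in simultaneously matching up the topological $\spinc$-structure, the non-trivial $U(1)$-bundle, its flat connection, and the flat twisting bundle $\alpha$ along an infinite family of distinct diffeomorphisms; however, since each of these auxiliary objects lives in a finite (in fact two-element or one-element) set of equivalence classes over our fixed manifold $M^{4k+1}_{d_0}$, iterated applications of pigeonhole dispatch the ambiguities, after which everything is controlled by Proposition \ref{PROP: eta-inv of Ric metrics}.
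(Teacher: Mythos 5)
Your proposal is correct and follows essentially the same route as the paper: the paper's justification for this claim is precisely the pigeonhole argument on the two topological $\spinc$-structures of $M^{4k+1}_{d_0}$, the uniqueness of the non-trivial flat line bundle $\alpha$, the invariance of relative eta-invariants under pullback, and Proposition \ref{PROP: eta-inv of Ric metrics}, exactly as in the analogous Claim for Theorem A. Your additional remarks on transporting the flat connection are consistent with the paper's observation that the invariant does not depend on the choice of flat connection.
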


Now we can argue as in the proof of Theorem A to distinguish components in $\mathcal{M}_{Ric > 0}(M^{4k+1}_{d_0})$. Note that, in contrast to the proof of Theorem A, there is no need to use the Ricci flow, since the lift of a path in $\mathcal{R}_{Ric > 0}(M^{4k+1}_{d_0})/\mathcal{D}$ already lies in $\mathcal{R}_{scal > 0}(M^{4k+1}_{d_0})$. We get the following result, which implies Theorem B.

\begin{theorem}\label{THM: aux theorem B}
Let $\{l_i'\}_{i\in\N}$ be the infinite subset from Claim \ref{CLAIM2 for RIC}. Then the metrics $\{h_{l_i'}\}_{i\in\N}$ represent infinitely many path components of $\mathcal{M}_{Ric > 0}(M^{4k+1}_{d_0})$.\qed
\end{theorem}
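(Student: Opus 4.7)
The plan is to follow the template of the proof of Theorem~\ref{THM: aux theorem}, with a simplification: since we are already starting from $Ric>0$ metrics, there is no need to invoke the B\"ohm--Wilking Ricci flow argument to pass from $sec\geq 0$ to $scal>0$; any path in $\mathcal{R}_{Ric>0}$ lies automatically in $\mathcal{R}_{scal>0}$, where the relative eta-invariant is already known to be a path-component invariant by Proposition~\ref{PROP: eta invariant scal}.

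First I would reduce to a quotient by a finite-index subgroup of $\text{Diff}(M^{4k+1}_{d_0})$. Let $\mathcal{D}\subset \text{Diff}(M^{4k+1}_{d_0})$ be the subgroup of diffeomorphisms preserving the (fixed) topological $\spinc$-structure appearing in Claim~\ref{CLAIM2 for RIC}. Since $M^{4k+1}_{d_0}$ is an oriented homotopy $\R P^{4k+1}$ it carries only two topological $\spinc$-structures up to equivalence (see Section~\ref{SS: APS eta}), so $\mathcal{D}$ has index at most two in $\text{Diff}(M^{4k+1}_{d_0})$. Consequently it suffices to show that the classes $\{[h_{l_i'}]\}_{i\in\N}$ represent infinitely many path components of the quotient $\mathcal{R}_{Ric>0}(M^{4k+1}_{d_0})/\mathcal{D}$.

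Second I would argue by contradiction, mimicking the final step of the proof of Theorem~A. Suppose that for some $i\neq j$ there is a path $\tilde{\gamma}\colon [0,1]\to \mathcal{R}_{Ric>0}(M^{4k+1}_{d_0})/\mathcal{D}$ with $\tilde{\gamma}(0)=[h_{l_i'}]$ and $\tilde{\gamma}(1)=[h_{l_j'}]$. By Ebin's slice theorem (Section~\ref{SS: spaces of metrics}) this path lifts to a continuous path $\gamma$ in $\mathcal{R}_{Ric>0}(M^{4k+1}_{d_0})$ with $\gamma(0)=h_{l_i'}$ and $\gamma(1)=\Phi^*(h_{l_j'})$ for some $\Phi\in\mathcal{D}$. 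Since $\mathcal{R}_{Ric>0}\subset\mathcal{R}_{scal>0}$, the entire path $\gamma$ lies in $\mathcal{R}_{scal>0}(M^{4k+1}_{d_0})$, and Proposition~\ref{PROP: eta invariant scal} yields $\widetilde{\eta}_\alpha(M^{4k+1}_{d_0},\gamma(0))=\widetilde{\eta}_\alpha(M^{4k+1}_{d_0},\gamma(1))$.

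Finally I would reach a contradiction by comparing the two endpoint invariants. Because $\Phi\in\mathcal{D}$ preserves the topological $\spinc$-structure and the (unique) non-trivial flat complex line bundle $\alpha$ on $M^{4k+1}_{d_0}$ is characterized by its topology, $\Phi$ preserves the relative eta-invariant, so $\widetilde{\eta}_\alpha(M^{4k+1}_{d_0},\Phi^*(h_{l_j'}))=\widetilde{\eta}_\alpha(M^{4k+1}_{d_0},h_{l_j'})$. Claim~\ref{CLAIM2 for RIC} then gives endpoint values $-2^{-2k}(d_0+l_i')$ and $-2^{-2k}(d_0+l_j')$, which are distinct since $l_i'\neq l_j'$, contradicting the constancy of $\widetilde{\eta}_\alpha$ along $\gamma$. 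Thus the classes $\{[h_{l_i'}]\}_{i\in\N}$ lie in pairwise distinct path components of $\mathcal{R}_{Ric>0}(M^{4k+1}_{d_0})/\mathcal{D}$, hence in pairwise distinct path components of $\mathcal{M}_{Ric>0}(M^{4k+1}_{d_0})$. The only subtle point is the compatibility of $\spinc$-structures under $\Phi$, but this is precisely encoded in the definition of $\mathcal{D}$; the rest of the argument is a direct invocation of earlier results.
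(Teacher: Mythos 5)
Your proposal is correct and follows essentially the same route as the paper: the authors prove this theorem by repeating the argument from the proof of Theorem A (reduction to the finite-index subgroup $\mathcal D$, Ebin slice lifting, invariance of $\widetilde{\eta}_\alpha$ under pull-back by elements of $\mathcal D$ and under paths in $\mathcal{R}_{scal>0}$), explicitly noting, as you do, that the Ricci-flow step is unnecessary since a path in $\mathcal{R}_{Ric>0}$ already lies in $\mathcal{R}_{scal>0}$.
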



\subsection{Cheeger deformation approach}\label{SS: Speranca approach}

As before let $d,n$ odd and let $d,n\geq 3$. The main step in this proof of Proposition \ref{PROP: eta-inv of Ric metrics} is to construct an $O(n)$-invariant metric $\tilde{g}$ on $M_d^{2n-1}$ with the following property: there exists $t_0\geq 0$ such that the Cheeger deformation $\tilde{g}_{t_0}$ is of $Ric>0$ and $\tilde{g}_t$ is of $scal >0$ for every $t\geq t_0$. Then the relative eta-invariant of $\tilde{g}^d:=\tilde{g}_{t_0}$ can be computed as in the proof of Claim \ref{CLAIM}, which gives us for $2n-1=4k+1$ the desired value $-2^{-(n-1)}d=-2^{-2k}d$ (see Claim \ref{CLAIM: computation of eta-inv for Cheeger scal>0}). The existence of such a metric is guaranteed by the following more general result.

\begin{proposition}\label{PROP: Cheeger def Ric>0 on Brieskorn sphere}
The manifold $\Sigma_0^{2n-1}(d)$ admits an $O(n)$-invariant metric $g$ with the following property: there exists $t_0\geq 0$ such that the Cheeger deformation $g_{t}$ is of $Ric>0$ for every $t\geq t_0$. The quotient $M_d^{2n-1}=\Sigma_0^{2n-1}(d)/\tau$ inherits a metric $\tilde{g}$ with the same properties.
\end{proposition}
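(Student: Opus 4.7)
The plan is to produce an $O(n)$-invariant starting metric $g$ on $\Sigma_0^{2n-1}(d)$ of non-negative Ricci curvature, and then to show that a Cheeger deformation $g_t$ of $g$ with respect to the $O(n)$-action has strictly positive Ricci curvature for all sufficiently large $t$. Since the entire deformation is $O(n)$-invariant and $\tau=(1,-Id)\in \so2\times O(n)$ acts freely, the deformed metric automatically descends to an $O(n)$-invariant metric $\tilde g$ on $M_d^{2n-1}$ with the same properties, establishing the last sentence of the proposition.

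For the initial metric I would use Cheeger's cohomogeneity one construction \cite{C72}, which produces on $\Sigma_0^{2n-1}(d)$ an $\so2\times O(n)$-invariant metric of $Ric\geq 0$; restricting the symmetry group, this metric is in particular $O(n)$-invariant. The only structural properties of $g$ that I would exploit, beyond $Ric_g\geq 0$ and $O(n)$-invariance, are the two facts recorded in Section~\ref{SS: group actions and quotients}: the principal $O(n)$-isotropy is $O(n-2)$ and the singular isotropies are $O(n-2)$ or $O(n-1)$, all proper in $O(n)$. In particular, at every $p\in \Sigma_0^{2n-1}(d)$ there exist $X,Y\in \mathfrak{m}_p$ with $[P_pX,P_pY]\neq 0$, exactly as in the hypothesis of Proposition~\ref{PROP: Cheeger sec and scal}.

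The heart of the argument is the Ricci analog of Propositions~\ref{PROP: Cheeger sec and scal} and~\ref{PROP: scal for large t}. Using the submersion $\pi_t$ of~\eqref{EQ: riem submersions t} and the splitting $T_pM=\mathcal{V}_p\oplus\mathcal{H}_p$ of Section~\ref{SS: review of Cheeger def}, one computes the Ricci form of $h_t$ in terms of $\mathrm{Ric}_h$, the bi-invariant Ricci on the orbit, and O'Neill correction terms coming from the $A$- and $T$-tensors of $\pi_t$. A direct inspection shows that on $\mathcal{V}_p$ the Ricci form grows linearly in $t$ with a coefficient which is strictly positive precisely because of the non-commuting pair $P_pX,P_pY$; on $\mathcal{H}_p$ the Ricci form is bounded below by $\mathrm{Ric}_h\geq 0$ up to a uniformly bounded correction; and the mixed block stays bounded uniformly in $t$. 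Applying a compactness argument in the same spirit as the proof of Proposition~\ref{PROP: scal for large t} then produces a uniform $t_0\geq 0$ beyond which $\mathrm{Ric}_{h_t}$ is positive definite at every point and on every tangent vector.

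The main obstacle is precisely this uniform positivity in every direction and at every point: unlike the scalar curvature estimates already stated in the paper, where one positive plane per point is enough, positive Ricci requires a positive lower bound for every unit vector and in particular control of the mixed terms. To handle this I would work intrinsically with the operator $C_t:T_pM\to T_pM$ from Section~\ref{SS: review of Cheeger def}: expressing $\mathrm{Ric}_{h_t}$ through $C_t$ and the horizontal lift formula $v\mapsto (v,-tP_pv_{\mathfrak{m}})$, one sees that vertical and horizontal contributions can be separated in the $h$-orthonormal frame, reducing matters to a finite number of uniform compactness estimates of the type given in Proposition~\ref{PROP: scal for large t}. Once these are in place, a single $t_0$ works over the compact manifold $\Sigma_0^{2n-1}(d)$, and the quotient statement is automatic from $O(n)$-invariance of $g_{t_0}$.
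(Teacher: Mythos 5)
There is a genuine gap at the heart of your argument: the ``Ricci analog'' of Propositions \ref{PROP: Cheeger sec and scal} and \ref{PROP: scal for large t} that you rely on is false as a general principle. The hypothesis that every $\mathfrak{m}_p$ contains a non-commuting pair produces, for large $t$, large curvature only on \emph{vertical} planes (reparametrized by $C_t^{-1}$); it gives no positivity whatsoever in horizontal directions. Your own estimate for the horizontal block --- ``bounded below by $\mathrm{Ric}_h\geq 0$ up to a uniformly bounded correction'' --- can therefore never yield strict positivity for a horizontal unit vector $v$ with $\mathrm{Ric}_h(v)=0$, and such vectors must be expected for a metric that is merely of $Ric\geq 0$. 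A concrete counterexample to the proposed lemma: take the product of a flat torus with $SU(2)$, with $SU(2)$ acting on the second factor; every $\mathfrak{m}_p$ is all of $\mathfrak{su}(2)$ and contains non-commuting pairs, yet the Cheeger deformation is again a product metric with a flat factor and is never of $Ric>0$. This is precisely the known difficulty that Searle and Wilhelm address in \cite{SW15}: to lift positive Ricci curvature from the orbit space one needs (i) the regular part of $M/G$ to have $Ric>0$, and (ii) a conformal change of the metric \emph{before} the Cheeger deformation; the Cheeger deformation alone does not suffice. Your starting metric (Cheeger's cohomogeneity one metric, viewed as an $O(n)$-cohomogeneity two metric) is not shown to satisfy (i), and your deformation omits (ii).

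The paper's proof is structured exactly to supply these missing ingredients. It replaces $\Sigma_0^{2n-1}(d)$ by the equivariant attaching space $\hat\Sigma^{2n-1}_d$ and constructs an $O(n)$-$O(n)$-bundle $P_d$ relating it to the round sphere $S^{2n-1}$ with a linear $O(n)$-action; via \cite[Cor. 5.2]{CS17} this produces an $O(n)$-invariant metric $h$ on $\hat\Sigma^{2n-1}_d$ whose orbit space is isometric to $S^{2n-1}/O(n)$, hence of positive curvature on its regular part by the Gray-O'Neill formula. Only then is the Searle--Wilhelm theorem applied, and the metric $g$ of the proposition is the resulting conformally changed metric $\overline h$, not an arbitrary $O(n)$-invariant metric of $Ric\geq 0$. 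To repair your argument you would either have to reproduce this (or an equivalent) construction of a starting metric with positively curved regular orbit space and invoke \cite{SW15}, or prove from scratch a quantitative horizontal Ricci estimate specific to this action --- which is substantially harder than the compactness argument of Proposition \ref{PROP: scal for large t} and is not contained in your sketch. The final step of your proposal (descent to $M_d^{2n-1}$ by $O(n)$-invariance and freeness of $\tau$) is fine.
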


\begin{proof} The proof consists of three parts: First we give a different description of $\Sigma_0^{2n-1}(d)$ as a certain attaching space $\hat \Sigma ^{2n-1}_d$. Then we construct a principal $O(n)$-bundle $P_d$ over $\hat \Sigma ^{2n-1}_d$ which is also the total space of another $O(n)$-bundle over the round $(2n-1)$-dimensional sphere. Finally we apply work of Searle and Wilhelm to this situation to construct the corresponding metric.

\smallskip

\noindent\textbf{A different description of $\Sigma_0^{2n-1}(d)$.}
As shown in \cite[V.9]{B72} the Brieskorn sphere $\Sigma_0^{2n-1}(d)$ is $O(n)$-equivariantly diffeomorphic to the following equivariant attaching space (see \cite[I.6.C]{B72} for the definition)
\begin{equation}\label{EQ: usual description of Sigma}
S^{n-1}\times D^n\bigcup_{\varphi^d} S^{n-1}\times D^n.
\end{equation}
Let us explain the notation and the actions involved in \eqref{EQ: usual description of Sigma}. The $O(n)$-action on $D^n$ and $S^{n-1}$ is the standard one from the left. The $O(n)$-action on $O(n)$ is by conjugation. Let $\theta : S^{n-1} \to O(n)$ be the map $x\mapsto \theta_x$, where $\theta_x$ denotes the reflection through the line $\R x$. Note that the map $\theta$ is $O(n)$-equivariant.
The attaching map $\varphi^d$ in \eqref{EQ: usual description of Sigma} is defined via the map $\varphi: S^{n-1}\times S^{n-1}\to  S^{n-1}\times S^{n-1}$, $\varphi(x,y):=(\theta _x(y),x)$. It follows that $\varphi$ is $O(n)$-equivariant as well, and the same holds for its powers $\varphi^l$, $l\in \Z$. In particular, the $O(n)$-action on the two halves of \eqref{EQ: usual description of Sigma} (the left-hand and right-hand side of the union) induce an $O(n)$-action on the attaching space. Observe that $\varphi^{2l}(x,y)=((\theta_x\theta_y)^l(x),(\theta_x\theta_y)^l(y))$.

It will be however more convenient to express $\Sigma_0^{2n-1}(d)$ in a different way other than \eqref{EQ: usual description of Sigma}. Here we follow Bredon's exposition in \cite[I.7]{B72} for the case $d=1$ and generalize it to every odd $d=2l+1$. Consider the equivariant attaching space
\begin{equation}\label{EQ: different description of Sigma}
\hat \Sigma ^{2n-1}_d:=D^{n}\times S^{n-1}\bigcup_{\varphi^{d-1}}S^{n-1}\times D^{n}.
\end{equation}
One can see that the expressions \eqref{EQ: usual description of Sigma} and \eqref{EQ: different description of Sigma} yield $O(n)$-equivariant diffeomorphic spaces as follows. First apply the automorphism $T(x,y):=(y,x)$ of $D^n\times D^n$ to the first half of \eqref{EQ: usual description of Sigma} and compensate the gluing function by composing it with $T^{-1}=T$. One gets the space 
\begin{equation}\label{EQ: intermediate description of Sigma}
D^{n}\times S^{n-1}\bigcup_{\varphi^{d}\circ T}S^{n-1}\times D^{n},
\end{equation}
which is $O(n)$-equivariantly diffeomorphic to the space in \eqref{EQ: usual description of Sigma}.

Now observe that the map $\varphi\circ T:(y,x)\mapsto (\theta _x(y),x)$ extends to a map on $D^n\times S^{n-1}$ and apply it to the first half of \eqref{EQ: intermediate description of Sigma}. Compensate the gluing function $\varphi^{d}\circ T=\varphi^{d-1}\circ\varphi\circ T$ by composing it with $(\varphi\circ T)^{-1}$. Then one gets precisely the expression in \eqref{EQ: different description of Sigma} as indicated in the following diagram:

$$
\begin{CD}D^n\times S^{n-1}\, @.\bigcup_{\varphi^d\circ T}\, S^{n-1}\times D^n\\
@V\varphi \circ T VV @VVIdV\\
D^n\times S^{n-1}\, @.\bigcup_{\varphi^{d-1}}\, S^{n-1}\times D^n.\\
\end{CD}
$$

\smallskip

\noindent\textbf{An $O(n)$-$O(n)$ bundle $P_d$.} Consider again the manifold $\hat \Sigma^{2n-1}_d$ in \eqref{EQ: different description of Sigma} with $d=2l+1$. We lift the gluing function $\varphi^{2l}$ to a clutching function $(\varphi^{2l},(\theta_x\theta_y)^l):S^{n-1}\times S^{n-1}\times O(n)\to  S^{n-1}\times S^{n-1}\times O(n)$ for a principal $O(n)$-bundle over $\hat \Sigma^{2n-1}_d$, where $(\theta_x\theta_y)^l$ denotes the map $O(n)\to O(n)$ defined by left matrix multiplication. Let $P_d$ be defined as the attaching space
$$
P_d:= D^{n}\times S^{n-1}\times O(n)\bigcup_{(\varphi^{2l},(\theta_x\theta_y)^l)}S^{n-1}\times D^{n}\times O(n).
$$
By construction, $P_d\to \hat \Sigma^{2n-1}_d$ is an $O(n)$-principal bundle with principal action, denoted by $\bullet$, given by right multiplication of $O(n)$ on itself in each half, i.e., $(x,y,B)\bullet A:=(x,y,BA)$.

The crucial fact of this construction is that $P_d$ has a second free (left) action by $O(n)$, denoted by $\star$. It is determined by $A\star(x,y,B)=(Ax,Ay,AB)$ on each half. Note that this defines an action on $P_d$ since the map $(\varphi^{2l},(\theta_x\theta_y)^l)$ is $O(n)$-equivariant. Observe that the actions $\bullet$ and $\star$ on $P_d$ commute.

The quotient of $P_d$ under the $\star$ action is diffeomorphic to $S^{2n-1}$ as we shall explain. First note that the $\star$-orbit of $(x,y,B)$ can be identified with $(B^{-1}x,B^{-1}y)$ and hence the quotient space of $P_d$ under $\star$ equals $D^n\times S^{n-1}\cup S^{n-1}\times D^n$ for some gluing function. To determine the gluing function let us look at a point $(x,y)$ in the (boundary of the) first half. Now $(x,y)$ corresponds to the $\star$-orbit of $(x,y,Id)$ which is mapped under the attaching map to the  $\star$-orbit of $(\varphi^{2l},(\theta_x\theta_y)^l)(x,y,Id)= ((\theta_x\theta_y)^l (x),(\theta_x\theta_y)^l (y),(\theta_x\theta_y)^l Id)$. The latter corresponds to $(x,y)$ in the (boundary of the) second half. Thus the gluing map is the identity and the resulting manifold 
\begin{equation}\label{EQ: sphere gluing}
D^n\times S^{n-1}\bigcup_{Id} S^{n-1}\times D^n
\end{equation}
is diffeomorphic to $S^{2n-1}$ via $F: (x,y)\mapsto (x,y)/\vert (x,y)\vert$,
where $\vert (x,y)\vert$ denotes the Euclidean norm in $\R^{2n}$.

By the commutativity of the $O(n)$-actions on $P_d$, the spaces $\hat \Sigma^{2n-1}_d$ and $S^{2n-1}$ inherit an $O(n)$-action. Moreover, the quotient spaces $\hat \Sigma^{2n-1}_d /O(n)$ and $S^{2n-1}/O(n)$ are canonically identified. In the terminology of Speran\c{c}a $P_d$ is an $O(n)$-$O(n)$-manifold, in \cite[\S 5]{S16} he explains the construction of $P_d$ for the case $d=3$.

We will need below the following fact: the induced (right) $O(n)$-action on $S^{2n-1}$ is linear. In fact, $A\in O(n)$ maps $(a,b)\in S^{2n-1}\subset \R ^n \times \R ^n$ to $(A^{-1}a,A^{-1}b)$. To see this, consider a point $(x,y)$ in one half of \eqref{EQ: sphere gluing} and identify it with the $\star$-orbit of $(x,y,Id)$ in the corresponding half of $P_d$. Now $A$ maps the $\star$-orbit of $(x,y,Id)$ to the $\star$-orbit of $(x,y,Id)\bullet A = (x,y,A)$. Thus the induced action of $A\in O(n)$ on each half of \eqref{EQ: sphere gluing} is of the form $(x,y)A=(A^{-1}x,A^{-1}y)$ and the induced action on $S^{2n-1}$ \wrt \ the diffeomorphism $F$ is given by the linear action $(a,b)\mapsto (A^{-1}a,A^{-1}b)$.

\smallskip

\noindent\textbf{Construction of the metrics.} Next we construct metrics on the manifolds above. First we use a construction of Cavenaghi and Speran\c{c}a to endow $G$-$G$ bundles with compatible invariant metrics \cite[Cor. 5.2]{CS17}. In our situation their work implies the following: given an $O(n)$-invariant metric on $S^{2n-1}$, one can construct an $O(n)$-invariant metric on $\hat \Sigma^{2n-1}_d$ such that the quotients $\hat \Sigma^{2n-1}_d /O(n)$ and $S^{2n-1}/O(n)$ are isometric as metric spaces. For curvature-related reasons it will be convenient to consider the round metric on $S^{2n-1}$, which is $O(n)$-invariant since the action is linear. Denote by $h$ the corresponding $O(n)$-invariant metric on $\hat \Sigma^{2n-1}_d$. Observe that the regular part of the quotient space $S^{2n-1}/O(n)=\hat \Sigma^{2n-1}_d /O(n)$ inherits a metric which is of positive sectional curvature due to the Gray-O'Neill formula \cite{ON66,G67} for Riemannian submersions.

Next we use a construction of Searle and Wilhelm for metrics of $Ric> 0$ on manifolds with group actions \cite{SW15}. They consider the following situation. Let $G$ be a compact Lie group acting by isometries on a closed Riemannian manifold $(M,h)$ such that the fundamental group of the principal orbit is finite, and the induced metric on the regular part of the quotient $M/G$ has $Ric >0$. Then they deform the metric $h$ in two steps: firstly, they do a certain conformal change, yielding a $G$-invariant metric $\overline{h}$; secondly, they do a Cheeger deformation $\overline{h}_t$ of $\overline{h}$ using the $G$-action. They show that, for a certain $t_0\geq 0$, the metric $\overline{h}_t$ is of $Ric>0$ for every $t\geq t_0$.

Note that the construction of Searle and Wilhelm applies to our $O(n)$-action on $(\hat \Sigma^{2n-1}_d,h)$, where $h$ is the \pullback \ of the round metric on $S^{2n-1}$. As discussed above the quotient space $\hat \Sigma^{2n-1}_d/O(n)$ has in particular $Ric>0$ in the regular part. The principal orbit has finite fundamental group. This follows from the fact that $\hat \Sigma^{2n-1}_d$ is $O(n)$-equivariantly diffeomorphic to $\Sigma_0^{2n-1}(d)$ and hence the corresponding principal orbits of each action are diffeomorphic (see Section \ref{SS: group actions and quotients}). Thus, there is an $O(n)$-invariant metric $\overline{h}$ on $\hat \Sigma^{2n-1}_d$ such that, for some $t_0\geq 0$, the Cheeger deformation $\overline{h}_t$ is of $Ric>0$ for every $t\geq t_0$ (an alternative argument which yields a metric of $Ric>0$ only using Cheeger deformation can be found in \cite[Thm. 6.3, Thm. 6.6]{CS17}). We set $g$ as the \pullback \ of $\overline{h}$ to $\Sigma_0^{2n-1}(d)$, and this metric satisfies the desired properties. Finally, since $g$ is $O(n)$-invariant it descends to an $O(n)$-invariant metric $\tilde{g}$ on the quotient $M_d^{2n-1}=\Sigma_0^{2n-1}(d)/\tau $ with the stated properties.
\end{proof}

\subsection{Surgery and plumbing approach}\label{SS: Wraith approach}

As before let $d,n$ odd and let $d,n\geq 3$ and $4k+1=2n-1$. To prove Proposition \ref{PROP: eta-inv of Ric metrics} we first review a different description of $\Sigma_0 ^{4k+1}(d)$ which was used by Wraith to construct metrics of $Ric > 0$. We then observe that Wraith's construction can be refined, allowing us to compute the relative eta-invariant to be equal to $-2^{-2k}d$.

As noted by Hirzebruch the homotopy sphere $\Sigma_0 ^{4k+1}(d)$ is as an $O(2k+1)$-manifold diffeomorphic to the boundary of the manifold obtained by plumbing $(d-1)$ disk tangent bundles \wrt \ the graph $A_{d-1}$. We briefly recall the construction and refer the reader to  \cite{HM68,B72} for more details.

Let $D^m$ and $S^{m-1}$ denote the unit disk and unit sphere in $\R ^m$, respectively. The standard action of $O(2k+1)\hookrightarrow O(2k+2)$ on $\R ^{2k+2}$ restricts to an action on $S^{2k+1}$ with precisely two fixed points. Let $D(TS^{2k+1})$ denote the unit disk tangent bundle. Consider the induced $O(2k+1)$-action on $D(TS^{2k+1})$. Let $U\subset S^{2k+1}$ be a small closed neighborhood of a fixed point which is equivariantly diffeomorphic to $D^{2k+1}$. We identify the restriction of the disk bundle $D(TS^{2k+1})$ to $U$ equivariantly with the trivial disk bundle $D^{2k+1}\times D(\R ^{2k+1})$, where $D(\R ^{2k+1})$ also denotes the unit disk in $\R ^{2k+1}$ and $O(2k+1)$ acts diagonally on the two factors. Note that the involution $\tau := -Id\in O(2k+1)$ acts by $(-Id,-Id)$ on $D^{2k+1}\times D(\R ^{2k+1})$.

Let $A_{d-1}$ denote the line graph $\dynkin{A}{}$ with $(d-1)$ vertices. Now consider the plumbing of $(d-1)$ copies of $D(TS^{2k+1})$ at fixed points of the $O(2k+1)$-action \wrt \ the graph $A_{d-1}$. Let $W(d)$ denote the $O(2k+1)$-manifold obtained by this plumbing. Its boundary $\partial W(d)$ is equivariantly diffeomorphic to $\Sigma_0 ^{4k+1}(d)$ (see \cite[p. 77]{HM68} and \cite[Thm. V.8.1, Thm. V.9.2]{B72}). We fix such an identification. Note that the involution $\tau := -Id\in O(2k+1)$ acts without fixed points on $\partial W(d)$ and with $d$ isolated fixed points on $W(d)$. With respect to the identification above the quotient $\partial W(d)/\tau $ is given by $M^{4k+1}_d$.

As in the case of Brieskorn spheres the preferred $\spinc$-structure of $\partial W(d)/\tau $ is induced by a $\tau$-equivariant almost complex structure on $W(d)$ as we will explain next. First recall that for any smooth manifold $N$ with tangent bundle $\pi :TN\to N$ a connection for $TN$ gives a splitting $T(TN)\cong \pi ^*(TN)\oplus \pi ^*(TN)$ into vertical and horizontal subbundles and induces almost complex structures $\pm J$ on $TN$, where $J$ acts on  $\pi ^*(TN)\oplus \pi ^*(TN)$ by $(v,w)\mapsto (-w,v)$. Recall also that if the connection is equivariant \wrt \ a diffeomorphism $\varphi :N\to N$ then the induced action of $\varphi $ on $TN$ will be compatible with the almost complex structure, i.e., $ J$ is $\varphi $-equivariant.

To define the almost complex structure on $W(d)$ we first consider the building block $D(TS^{2k+1})$. Near the two $\tau$-fixed points of $S^{2k+1}$ we identify the disk bundle $D(TS^{2k+1})$ equivariantly with $D^{2k+1}\times D(\R ^{2k+1})$, as before. We fix a $\tau$-equivariant connection for $TS^{2k+1}$ which is trivial \wrt \ the trivializations.

Let us call $W_i$ the $i$th step in the plumbing process starting with $W_1=D(TS^{2k+1})$ and ending with $W_{d-1}=W(d)$. On the tangent bundle $TW_1$ we choose one of the complex structures, say $J_1$, induced by the connection on $TS^{2k+1}$. By construction $J_1$ is $\tau$-equivariant. Suppose the $\tau$-equivariant complex structure $J_i$ on $TW_i$ has already been constructed and $W_{i+1}$ is obtained by plumbing $W_i$ and $D(TS^{2k+1})$ at $\tau $-fixed points. Note that $J_i$ determines a complex structure $J_{i+1}$ for $TW_{i+1}$ which is compatible with $J_i$ near the fixed point \wrt \ plumbing and restricts to one of the complex structures induced by the connection for $TS^{2k+1}$. Note also that $J_{i+1}$ is $\tau$-equivariant. Since $A_{d-1}$ has no cycles we obtain a well-defined almost complex structure $J_W$ on $W(d)$ which is preserved under the action of the involution $\tau$. As before $J_W$, together with a fixed $\tau$-equivariant Hermitian metric on $TW$, induces a $\spinc$-structure on the boundary $\partial W(d)$ which yields the preferred $\spinc$-structure on the quotient $\partial W(d)/\tau \cong M^{4k+1}_d$.

The metric $\tilde{g}^d$ on $M^{4k+1}_d$ which we use in this proof of Proposition \ref{PROP: eta-inv of Ric metrics} is based on a construction of Wraith. In \cite{W97} Wraith constructed metrics of $Ric>0$ on $\partial W(d)$ and on the boundary of many other plumbing constructions. In \cite{W11} he proved that the metric of $Ric>0$ can be chosen in such a way that after a deformation via metrics of $scal>0$ the deformed metric can be extended to a metric of $scal>0$ on $W(d)$ which is of product form near the boundary. Based on this construction Wraith showed that for every $(4k-1)$-dimensional homotopy sphere which bounds a parallelizable manifold the moduli space of metrics of $Ric > 0$ has infinitely many path components.
 
An inspection of Wraith's construction in \cite{W11} and the references therein shows that if one does the plumbing construction for $W(d)$ at $\tau$-fixed points \wrt \ the graph $A_{d-1}$ one can ensure also that the metrics are $\tau$-invariant. The construction of metrics and their deformations are done step by step in the surgery/plumbing process. In each step one has to choose connections for the respective sphere and disk bundles which depend on the trivializations of these bundles. In the situation considered above the trivializations are $\tau$-equivariant which implies that the involved connections can be assumed to be $\tau$-equivariant. All other ingredients in the surgery/plumbing process depend on functions of a radial parameter (distance to the $\tau$-fixed point) which are clearly $\tau$-equivariant.

The upshot is that Wraith's construction applied to the plumbing of $(d-1)$ copies of $D(TS^{2k+1})$ at $\tau$-fixed points \wrt \ the graph $A_{d-1}$ gives a $\tau $-invariant metric $g^d$ of $Ric>0$ on the boundary $\partial W(d)$ such that after a deformation via $\tau $-invariant metrics of $scal>0$ the deformed metric can be extended to a $\tau $-invariant metric $g_W$ of $scal>0$ on $W(d)$ which is of product form near the boundary.

Let $\tilde{g}^d$ and $\tilde {h}^d$ denote the metrics on $M^{4k+1}_d$ induced by $g^d$ and the restriction $g_W\vert _{\partial W(d)}$, respectively, under the identification $\partial W(d)/\tau \cong M^{4k+1}_d$.

We are now in the position to prove Proposition \ref{PROP: eta-inv of Ric metrics}. We equip $M^{4k+1}_d$ with the preferred $\spinc $-structure induced from the almost complex structure on $W(d)$ and the metrics $\tilde{g}^d$ (resp. $\tilde {h}^d$) and fix a flat connection on the associated $U(1)$-principal bundle. Let $\alpha $ denote the non-trivial flat complex line bundle over $M^{4k+1}_d$. Consider the relative eta-invariants $\widetilde{\eta}_\alpha \left(M^{4k+1}_d,\tilde{g}^d\right)$  and $\widetilde{\eta}_\alpha \left(M^{4k+1}_d,\tilde{h}^d\right)$ for the corresponding $\spinc $-Dirac operators twisted with $\alpha $.  Since $\tilde{g}^d$ and $\tilde {h}^d$ are connected by a path of metrics of $scal>0$ one has by Proposition \ref{PROP: eta invariant scal} that $\widetilde{\eta}_\alpha \left(M^{4k+1}_d,\tilde{g}^d\right)=\widetilde{\eta}_\alpha \left(M^{4k+1}_d,\tilde{h}^d\right)$. Hence, it suffices to compute $\widetilde{\eta}_\alpha \left(M^{4k+1}_d,\tilde{h}^d\right)$.

Using Donnelly's fixed point formula $\widetilde{\eta}_\alpha \left(M^{4k+1}_d,\tilde{h}^d\right)$ can be computed as the sum of local contributions at the $\tau$-fixed points in $W(d)$. Since the $\spinc $-structure is induced from a $\tau$-equivariant almost complex structure one finds that each contribution is equal to $\frac 1 {2^{2k+1}}$ (see Proposition \ref{PROP: Dolbeault contribution}). Arguing as in the proof of Proposition \ref{PROP: eta of special metrics with s nonzero} it follows that
$$\widetilde{\eta}_\alpha \left(M^{4k+1}_d,\tilde{g}^d\right)=\widetilde{\eta}_\alpha \left(M^{4k+1}_d,\tilde{h}^d\right)=-2\cdot d \cdot \frac 1 {2^{2k+1}}=- \frac d {2^{2k}}.$$\qed

{\small 

}

\bigskip
\noindent
\textsc{Department of Mathematics, University of Fribourg, Switzerland}\\
{\em E-mail address:} \textrm{anand.dessai@unifr.ch}\\

\smallskip
\noindent
\textsc{ETSI de Caminos, Canales y Puertos, Universidad Polit\'ecnica de Madrid, Spain}\\
{\em E-mail address:} \textrm{david.gonzalez.alvaro@upm.es}

\end{document}